\newtheorem{theorem}{Theorem}[section]
\newtheorem{lemma}[theorem]{Lemma}
\newtheorem{proposition}[theorem]{Proposition}
\newtheorem{corollary}[theorem]{Corollary}
\newtheorem{remark}[theorem]{Remark}
\renewcommand{\a}{\alpha}
\newcommand{\Si}{\Sigma}
\newcommand{\om}{\omega}
\newcommand{\Om}{\Omega}
\newcommand{\R}{\mathbb{R}}
\newcommand{\N}{\mathbb{N}}
\newcommand{\Z}{\mathbb{Z}}
\newcommand{\T}{\mathbb{T}}
\newcommand{\mf}[1]{{\mathfrak #1}}
\newcommand{\mb}[1]{{\mathbf #1}}
\newcommand{\bb}[1]{{\mathbb #1}}
\newcommand{\bs}[1]{{\boldsymbol #1}}
\newcommand{\ms}[1]{{\mathscr #1}}
\definecolor{bblue}{rgb}{.2,0.2,.8}
\begin{document}

\title[Viscous Burgers equations]{Derivation of viscous Burgers
  equations from weakly asymmetric exclusion processes}
\author{M. Jara, C. Landim and K. Tsunoda}

\address{\noindent IMPA, Estrada Dona Castorina 110, CEP 22460 Rio de Janeiro, Brasil.
\newline e-mail: \rm
  \texttt{mjara@impa.br} }

\address{\noindent IMPA, Estrada Dona Castorina 110, CEP 22460 Rio de
  Janeiro, Brasil and CNRS UMR 6085, Universit\'e de Rouen, Avenue de
  l'Universit\'e, BP.12, Technop\^ole du Madril\-let, F76801
  Saint-\'Etienne-du-Rouvray, France.  \newline e-mail: \rm
  \texttt{landim@impa.br} }

\address{\noindent Department of Mathematics, Osaka University,
Osaka, 560-0043, Japan.
\newline e-mail:  \rm \texttt{k-tsunoda@math.sci.osaka-u.ac.jp}}

\subjclass[2010]{Primary 60K35, secondary 82C22}

\keywords{Viscous Burgers equations; Weakly asymmetric exclusion processes;
Incompressible limits}

\begin{abstract}
  We consider weakly asymmetric exclusion processes whose initial
  density profile is a small perturbation of a constant. We show that
  in the diffusive time-scale, in all dimensions, the density defect
  evolves as the solution of a viscous Burgers equation.
\end{abstract}

\maketitle

\section{Introduction}
\label{intro}

One of the main open problems in nonequilibrium statistical mechanics
is the derivation of the hydrodynamic equations of fluids, the
so-called Euler and Navier-Stokes equations, from the microscopic
Hamiltonian dynamics.

In contrast with the Euler equations, the Navier-Stokes equations are
not scale invariant. They are obtained as corrections of the Euler
equations by adding a small viscosity, materialized as a second order
derivative of the conserved quantities.

Almost thirty years ago, Esposito, Marra and Yau \cite{emy1, emy2}
initiated the investigation of the time evolution of small
perturbations of the density profile around the hydrodynamic limit for
stochastic systems, deriving the incompressible limit for asymmetric
simple exclusion processes in dimension $d\ge 3$.

To describe their result, fix a scaling parameter $n\in\N$, and denote
by $\color{bblue} \T_n^d=(\Z/n\Z)^d$ the $d$-dimensional discrete
torus with $n^d$ points. Elements of $\bb T^d_n$ are represented by
the letters $x$, $y$, $z$. Denote the configuration space by
$\color{bblue} \Om_n=\{0,1\}^{\T_n^d}$ and by
$\color{bblue} \eta=\{\eta_x:x \in\T_n^d\}$ the elements of $\Om_n$,
which describes a configuration on $\T_n^d$ such that $\eta_x=1$ if
there is a particle at $x\in\T_n^d$ and $\eta_x=0$ otherwise.  For a
configuration $\eta\in\Om_n$, let $\sigma^{x,y}\eta$ be the
configuration of particles obtained from $\eta$ by exchanging the
occupation variables $\eta_x$ and $\eta_y$:
\begin{align*}
(\sigma^{x,y}\eta)_z \;=\; \begin{cases}
     \eta_y  & \text{if $z=x$}\;, \\
     \eta_x  & \text{if $z=y$}\;, \\
     \eta_z    & \text{otherwise}\;.
\end{cases}
\end{align*}

Consider the asymmetric exclusion process on $\Om_n$. This is the
Markov chain whose generator, denoted by $L_n^A$, applied to a
function $f: \Omega_n \to \bb R$ is given by
\begin{equation}
\label{int-1}
(L_n^A f)\, (\eta)\;=\; \sum_{x\in\T_n^d} \sum_{j=1}^d r_{x,j} (\eta) \, 
\{f(\sigma^{x,x+e_j}\eta)-f(\eta)\}\;,
\end{equation}
where $\color{bblue} \{e_j : 1\le j\le d\}$ represents the canonical
basis of $\bb R^d$,
$r_{x,j} (\eta) = p_j \, \eta_x \, (1-\eta_{x+e_j}) \,+\, q_j
\eta_{x+e_j} \, (1-\eta_{x})$ and $0\le p_j \le 1$, $q_j=1-p_j$.

Denote by $\bs \theta = (\theta_1, \dots, \theta_d)$ the points of the
$d$-dimensional continuous torus $\bb T^d=[0,1)^d$ and by
$\color{bblue} \nabla F$ the gradient of a function
$F: \bb T^d \to \bb R$,
$\nabla F = (\partial_{\theta_1} F, \dots, \partial_{\theta_d} F)$.
It is well known \cite{rez90,kl}, that in the hyperbolic scaling the density
profile evolves according to the inviscid Burger's equation
\begin{equation*}
\partial_t u  \;+\; \mb m \cdot \nabla\, \sigma_0 (u) \;=\; 0 \;,
\end{equation*}
where $\sigma_0(\alpha) = \alpha (1-\alpha)$ is the mobility and $\mb
m$ is the vector whose coordinates are given by $m_j = p_j - q_j$.

In dimension $d\ge 3$, the macroscopic current $\mb m\, \sigma_0 (u)$
is expected to have a correction of order $1/n$ and be given by
$\mb m\, \sigma_0 (u) - (1/n) \sum_k a_{j,k} (u) \partial_{\theta_k}
u$
for some diffusion coefficient $a$. If this is the case, the partial
differential equation which describes the evolution of the density
becomes
\begin{equation*}
\partial_t u  \;+\; \mb m \cdot \nabla\, \sigma_0 (u) \;=\; 
\frac 1n\, \sum_{j,k} \partial_{\theta_j} \big(\,
a_{j,k}(u)\, \partial_{\theta_k} u\, \big) \;.
\end{equation*}

If we start from a density which is a $(1/n)$-perturbation of the
constant profile equal to $1/2$,
$u_0(\bs \theta) = (1/2) + \epsilon_n\, v_0(\bs \theta)$, where
$\epsilon_n = 1/n$, if we rescale time by an extra factor $n$ and
assume that the density profile remains at all times a
$(1/n)$-perturbation of the constant profile equal to $1/2$,
$u(t,\bs \theta) = (1/2) + \epsilon_n\, v(t,\bs \theta)$, as
$\sigma_0'(1/2)=0$, a Taylor expansion yields that the perturbation $v$
is expected to solve the viscous Burgers equation
\begin{equation}
\label{int-2}
\partial_t v  \;=\; \mb m \cdot \nabla\, v^2 \;+\; 
\sum_{j,k} a_{j,k}(1/2)\, \partial^2_{\theta_j, \theta_k} v \;.
\end{equation}
This is the content of the main result of Esposito, Marra and Yau
\cite{emy1, emy2} which we now state.  Note that one can consider a
perturbation around a general constant profile $\a\in(0,1)$ by
performing a Galilean transformation [see Remark \ref{rm6}].

Recall that a function $f: \{0,1\}^{\bb Z^d} \to \bb R$ is said to be
a local function or a {\color{bblue} cylinder function} if it depends
on the configuration $\eta$ only through a finite number of
coordinates.

Denote by $\color{bblue} \{\tau_x : x\in \bb Z^d\}$ the group of
translations acting on $\Omega_n$: For a configuration
$\eta\in \Omega_n$, $\tau_x \eta$ is the configuration given by
$(\tau_x \eta)_z = \eta_{x+z}$, where the sum is taken modulo $n$. We
extend the translations to functions $f: \Omega_n \to \bb R$ by
setting $(\tau_x f)(\eta) = f(\tau_x \eta)$, $x\in \bb Z^d$,
$\eta\in \Omega_n$.

Let $\color{bblue} \nu_\alpha$, $0\le \alpha\le 1$, be the product
measure on $\{0,1\}^{\bb Z^d}$ with density $\alpha$. For a continuous
function $u:\T^d\to[0,1]$, denote by $\nu^n_{u(\cdot)}$ the Bernoulli
product measure on $\Om_n$ with marginal density $u(x/n)$:
\begin{equation}
\label{23}
\nu^n_{u(\cdot)}\{\eta(x)=1\}\;=\;u(x/n)\;, \quad x\in\T_n^d.
\end{equation}
Fix a density $v_0 : \bb T^d \to \bb R$, and let $\nu_t^n$, $t\ge 0$,
be the measure $\nu^n_{(1/2) + \epsilon_n v(t, \cdot)}$, where
$v(t, \bs \theta)$ is the solution of equation \eqref{int-2} with
initial condition $v_0$.

Denote by $\eta^n(t)$ the Markov chain on $\Omega_n$ induced by the
generator $n^2\, L^A_n$, where $L^A_n$ has been introduced in
\eqref{int-1}. Note that time has been rescaled diffusively. For a
probability measure $\mu$ on $\Omega_n$, denote by $\bb P_\mu$ the
distribution of the process $\eta^n(t)$ starting from
$\mu$. Expectation with respect to $\bb P_\mu$ is represented by
$\bb E_\mu$.

Fix a smooth density profile $v_0: \bb T^d \to \bb R$, and distribute
particles on $\bb T^d_n$ according to
$\nu^n_0 = \nu^n_{(1/2) + \epsilon_n v_0(\cdot)}$.  Then, in dimension
$d\ge 3$, for every $t>0$, continuous function $G: \bb T^d \to \bb R$,
and cylinder function $\Psi: \{0,1\}^{\bb Z^d} \to \bb R$,
\begin{equation}
\label{int-6}
\lim_{n\to\infty} \bb E_{\nu^n_0}
\Big[\, \dfrac{1}{n^{d-1}} \, \Big| \sum_{x\in\T_n^d} G(x/n)\,
\Big\{ (\tau_x\Psi) \big(\eta^n(t) \big) \,-\,
E_{\nu_{\rho_n(t,x)}}[\Psi] \, \Big\}\,\Big|\,  \Big]
\;=\; 0\;,
\end{equation}
where  $\rho_n(t,x) = (1/2) + \epsilon_n v(t,x/n)$ and, recall,
$\nu_\alpha$ stands for the Bernoulli product measure with density
$\alpha$.

The proof of this result is based on a sharp estimate of the relative entropy.
Let $\Si_n$ be the set of all probability measures on $\Om_n$.  For a
reference measure $\nu\in\Si_n$, define the relative entropy
$H_n(\cdot \,|\,\nu)$ with respect to $\nu$ by
\begin{equation*}
H_n(\mu \,|\, \nu)\;=\; \sup_f \Big\{ \int_{\Omega_n} f\, d\mu \,-\,
\log \int_{\Omega_n}  e^f \, d\nu \Big\}\;,
\end{equation*}
where the supremum is carried over all functions $f: \Om_n \to \bb R$.
It is well known that
\begin{equation}
\label{01}
H_n(\mu|\nu)\;=\;
\int_{\Om_n} \dfrac{d\mu}{d\nu}\, \log{\dfrac{d\mu}{d\nu}} \, d\nu \;,
\end{equation}
if $\mu$ is absolutely continuous with respect to $\nu$, while
$H_n(\mu|\nu) = \infty$ if this is not the case.

Denote by $\{S^n_t : t \ge 0\}$ the semigroup of the Markov chain
$\eta^n_t$ rescaled diffusively. Hence, $\mu^n S^n_t$ represents the
state of the process at time $t$ provided the initial state is
$\mu^n$. Esposito, Marra and Yau \cite{emy1, emy2} proved that in
dimension $d\ge 3$,
\begin{equation*}
\lim_{n\to \infty} \frac 1{n^{d-2}}\,  H_n(\mu^nS_t^n|\nu_t^n)
\;=\;  0 \;,
\end{equation*}
where $\nu^n_t$ has been introduced just below \eqref{23}.  It is not
difficult to deduce \eqref{int-6} from the previous bound.

The result is restricted to $d\ge 3$, as in dimension $1$ and $2$
Gaussian fluctuations of order $n^{-d/2}$ appear around the
hydrodynamic limit and $n^{-d/2}$ is at least of the order of $1/n$ in
dimensions $1$ and $2$. For more details, see \cite[Section 1]{emy1}.\smallskip

In this article, we pursue the investigation of the time evolution in
the hydrodynamic limit of densities in the vicinity of constant
profiles by considering weakly asymmetric exclusion processes. These
are Markov processes on $\Omega_n$ whose generator $L_n$ acts on
cylinder functions as $L_n f = n^2 L^S_n f + n L^T_n f$, where $L_n^S$
represents the generator of the speed-change, symmetric exclusion
process given by
\begin{equation}
\label{int-7}
(L_n^Sf)\, (\eta)\;=\; \sum_{x\in\T_n^d} \sum_{j=1}^d 
c_j (\tau_x \eta) \, \{f(\sigma^{x,x+e_j}\eta)-f(\eta)\}\;,
\end{equation}
and $L_n^T$ the generator of the speed-change totally asymmetric
exclusion process given by
\begin{equation}
\label{11}
(L_n^Tf)\, (\eta)\;=\; \sum_{x\in\T_n^d} \sum_{j=1}^d \mb m_j \,
c_j(\tau_x \eta) \, \eta_x \, (1-\eta_{x+e_j}) \, 
\{f(\sigma^{x,x+e_j}\eta)-f(\eta)\}\;.
\end{equation}
In this formula, $c_j: \{0,1\}^{\bb Z^d} \to \bb R$, $1\le j\le d$,
are cylinder functions and $\mb m = (\mb m_1, \dots, \mb m_d)$ is a fixed
vector in $\bb R^d$. In this paper, we assume that $c_j$ does not
depend on the occupation variables $\eta_0$ and $\eta_{e_j}$ and
satisfies the gradient conditions \eqref{02}. Under these conditions,
one can see that the generator $L_n$ is invariant with respect to the
Bernoulli measures.

Note that the symmetric generator has been speeded-up by $n^2$, while
the asymmetric one by $n$. In other words, we consider a weakly
asymmetric system in a diffusive time scale $n^2$ with asymmetry
strength of order $1/n$.

The hydrodynamic equation of the weakly asymmetric speed-change
exclusion process is given by
\begin{equation*}
\partial_t u  \;=\; \nabla \cdot [ \, D(u) \, \nabla u \, ] 
\;-\; \nabla \cdot [\, \sigma (u) \, \mb m\,]  \;,
\end{equation*}
where the matrices $D(\cdot)$ and $\sigma (\cdot)$ represent the
diffusivity and the mobility, respectively. By further accelerating
the symmetric part of the dynamics by $b_n$, the asymmetric one by
$a_n$, and by assuming that the density is an
$\epsilon_n$-perturbation of a constant $\alpha$, viz.
$u(t,\theta) = \alpha + \epsilon_n v(t,\theta)$, we get from the
previous equation that
\begin{align*}
\partial_t v \; & =\;
b_n \, \nabla \cdot \big[ \, D (\alpha) \, \nabla v \,\big]
\;+\; b_n \, \epsilon_n\, \nabla \cdot \big[ \,  v \,  D' (\alpha)
\, \nabla v \,\big] \\
\; & -\; a_n \nabla\cdot[\,  v \, \sigma' (\alpha)\, \mb m \, ]
\; -\; (1/2)\,  a_n\, \epsilon_n\,
\nabla\cdot[\,  v^2 \, \sigma'' (\alpha)\, \mb m \, ] \;.
\end{align*}
There are many ways to handle the right-hand side. One of them is to
set $b_n=1$, $a_n = \epsilon^{-1}_n$, and assume that
$\sigma' (\alpha)=0$. In this case, up to smaller order terms, the
equation becomes
\begin{equation}
\label{int-3}
\partial_t v \; =\;
\nabla \cdot \big[ \, D (\alpha) \, \nabla v \,\big]
\; - \; (1/2)\,   \nabla\cdot[\,  v^2 \, \sigma'' (\alpha)\, \mb m \, ] \;.
\end{equation}

Assume, therefore, that $\sigma' (\alpha)=0$ for some
$\alpha\in (0,1)$. Note that this $\a$ always exists since
each entry of $\sigma$ is smooth and vanishes at $0$ and $1$.
Consider the weakly asymmetric exclusion
process in which the asymmetric part of the generator has been
speeded-up by $a_n n$ [instead of $n$] for some sequence
$a_n \to \infty$ and $a_nn^{-1}\to0$. Note that the latter condition ensures that
the operator $n^2[L_n^S+(a_n/n)L_n^T]$ becomes a Markovian generator
for sufficiently large $n$.  Denote by $v=v(t,\theta)$ the solution of
\eqref{int-3} with a smooth initial condition
$v_0: \bb T^d \to \bb R$.  Distribute particles on $\bb T^d_n$
according to $\nu^n_0 = \nu^n_{\alpha + \epsilon_n v_0(\cdot)}$, where
$\epsilon_n = 1/a_n$.  The first main result of this article states
that under some hypotheses on $a_n$, for every $t>0$, continuous
function $G: \bb T^d \to \bb R$, and cylinder function
$\Psi: \{0,1\}^{\bb Z^d} \to \bb R$,
\begin{equation}
\label{int-5}
\lim_{n\to\infty} \bb E_{\nu^n_0}
\Big[\, \dfrac{1}{n^{d}\epsilon_n} \, \Big| \sum_{x\in\T_n^d} G(x/n)\,
\Big\{ (\tau_x\Psi) \big(\eta^n(t) \big) \,-\,
E_{\nu_{\rho_n(t,x)}}[\Psi] \, \Big\}\,\Big|\,  \Big]
\;=\; 0\;,
\end{equation}
where  $\rho_n(t,x) = \alpha + \epsilon_n v(t,x/n)$.

As above, the proof of this result is based on an estimate of the
relative entropy of the state of the process with respect to a product
measure. We start the presentation of this bound with a remark which
elucidates what is needed. In Lemma \ref{as3} below, we show that in
order to single out an $\epsilon_n$-perturbation of the density around
a constant profile we need the entropy of the state of the process
with respect to the inhomogeneous product measure associated to the
density profile $\alpha + \epsilon_n v(t,x/n)$ to be of an order much
smaller than $n^d \, \epsilon^2_n$.

To state the entropy bound, denote by $d$ the dimension, and let
$(g_d(n) : n\ge 1)$ be the sequences given by
\begin{align}
\label{int-4}
g_d(n)\;=\;
\begin{cases}
n\,, & \text{if $d=1$\,,} \\
\log n\,, & \text{if $d=2$\,,} \\
1\,, & \text{for $d\ge 3$\,.}
\end{cases}
\end{align}
Following Jara and Menezes in \cite{jm2}, we prove in Theorem
\ref{main1} that under certain assumptions on the initial profile
$v_0$, the sequence $a_n$ and the initial distribution of particles,
for all $t>0$ there exists a finite constant $C=C(t)$, such that
\begin{equation*}
H_n(\mu^nS_t^n|\nu_t^n)\;\le \; C \, n^{d-2}\, g_d(n) \;,
\end{equation*}
where $\nu_t^n$ stands for the inhomogeneous product measure
associated to the density profile $\alpha + \epsilon_n v(t,x/n)$. 
This entropy estimate and a simple argument, presented in the proof of
Corollary \ref{main2}, yield \eqref{int-5}. Lemma \ref{as3} and
\eqref{int-4} yield some restrictions on $\epsilon_n$ discussed in
Remark \ref{rm4} below. \smallskip

We here mention related results, which establish the incompressible
limits for interacting particle systems: Esposito, Marra and Yau
\cite{emy1, emy2}, Quastel and Yau \cite{qy98}, Beltr\'an and Landim
\cite{bl}.  We also mention recent results, which study the entropy
estimate as in Theorem \ref{main1}.  The entropy estimate as in
Theorem \ref{main1} has been established in Jara and Menezes
\cite{jm1, jm2} to study the nonequilibrium fluctuations for
interacting particle systems. By establishing a similar entropy
estimate, Funaki and Tsunoda \cite{ft18} derived the motion by mean
curvature from Glauber-Kawasaki processes.

We conclude this introduction mentioning two other ways to detect
the evolution of small perturbations around the hydrodynamic limit.
Dobrushin \cite{d}, Dobrushin, Pellegrinotti, Suhov and Triolo
\cite{dpst88}, Dobrushin, Pellegrinotti, Suhov \cite{dps90} and
Landim, Olla, Yau \cite{loy96, loy97} investigated the first order
correction to the hydrodynamic equation. Landim, Valle and Sued \cite{lsv04}
examined the evolution of the density profile in the orthogonal
direction to the drift when the initial condition is constant along
the drift direction. Versions of these results might be problems for
future investigation.

\section{Notation and results}
\label{setting}

\subsection{Model}

Recall that we denote by $\{ e_j : j=1,\dots, d\}$ the canonical basis
of $\R^d$. Fix cylinder functions $c_j:\{0,1\}^{\bb Z^d} \to \bb R_+$,
$1\le j\le d$.  Assume that $c_j$ does not depend on $\eta_0$,
$\eta_{e_j}$ and that the gradient conditions are in force: For each
$j$, there exist cylinder functions $g_{j,p}$ and finitely-supported
signed measures $m_{j,p}$, $1\le p\le n_j$, such that
\begin{equation}
\label{02}
c_j (\eta) \, [\, \eta_{0} \,-\, \eta_{e_j}\,] \,=\, 
\sum_{p=1}^{n_j} \sum_{y\in\bb Z^d}  m_{j,p}(y) \, (\tau_y \, g_{j,p}
)(\eta)\;, \quad \sum_{y\in\bb Z^d}  m_{j,p}(y) \;=\; 0\; .
\end{equation}
Denote by $\ell_0$ the size of the support of the measures
$m_{j,p}$. This is the smallest integer such that
\begin{equation*}
m_{j,p} (y) \;=\; 0 \quad \text{if}\;\; y\;\not\in\; \Lambda_{\ell_0}
\;:=\; \{-\ell_0, \dots , \ell_0\}^d \;.
\end{equation*}

Let $L_n^S$ be the generator of the speed-change exclusion process in
$\Om_n$ introduced in \eqref{int-7}, and let $L_n^T$ be the generator
of the speed-change totally asymmetric exclusion process in $\Om_n$,
introduced in \eqref{11}.

Recall that we denote by $\color{bblue} \nu_\alpha = \nu^n_\alpha$,
$0\le \alpha \le 1$, the Bernoulli product measure on $\Om_n$ or on
$\{0,1\}^{\bb Z^d}$ with density $\alpha$. 
Since we assume that $c_j$ does not depend on $\eta_0, \eta_{e_j}$,
for any $\a$, $L_n^S$ is reversible with respect to $\nu_\a$.
Moreover, this assumption together with the gradient conditions \eqref{02} ensures that
$L_n^T$ is invariant with respect to $\nu_\a$. For a cylinder function
$g:\{0,1\}^{\bb Z^d} \to \bb R$, let $\widetilde g: [0,1] \to \bb R$ be
the polynomial function given by
\begin{equation}
\label{28}
\widetilde g (\alpha) \;=\; E_{\nu_\alpha}[\,g\,]\;, \qquad
\alpha\,\in\, [0,1]\; .
\end{equation}
\smallskip

Denote by $D(\rho) = (D_{j,k}(\rho))_{1 \le j,k \le d}$, the
\emph{diffusivity} of the exclusion process, the matrix whose entries
are given by
\begin{equation}
\label{27}
D_{j,k}(\rho) \;=\; \sum_{p=1}^{n_j} D_p(j,k) \, \widetilde
g'_{j,p}(\rho)\;, \quad\text{where}\quad
D_p(j,k) \;=\; -\, \sum_{y} y_k\, m_{j,p}(y)\;.
\end{equation}
In this formula, $\widetilde g_{j,p}'$ represents the derivative of
the function $\widetilde g_{j,p}$. This later one is obtained through
equation \eqref{28} from the cylinder functions $g_{j,p}$ introduced
in \eqref{02}. We prove in Proposition \ref{l12} that $D(\rho)$ is a
diagonal matrix:
\begin{equation}
\label{32}
\sum_{p=1}^{n_j} D_p(j, k) \, 
\widetilde{g}'_{j,p}(\rho) \;=\; 0 
\quad\text{for}\quad k \,\not =\, j \;.
\end{equation}

Denote by $\sigma(\rho) = (\sigma_{i,j}(\rho))_{1 \le i,j\le d}$ the
\emph{mobility}, the diagonal matrix whose entries are given by
\begin{equation}
\label{33}
\sigma_{j,j}(\rho) \;=\; \rho\, (1-\rho) \, \widetilde{c}_j(\rho)\;. 
\end{equation}
We prove in Proposition \ref{l12} the Einstein relation, which in the
present context reads that for every $\rho \in (0,1)$, $1\le j\le d$,
\begin{equation}
\label{29}
\widetilde{c}_j(\rho) \;=\; \sum_{p=1}^{n_j} D_p(j, j) \, 
\widetilde{g}'_{j,p}(\rho) \quad\text{so that}\quad
\frac 1{\chi(\rho)} \; \sigma(\rho) \;=\; D(\rho)\;,
\end{equation}
where $\chi(\rho) = \rho\, (1-\rho)$ is the \emph{static
  compressibility}.  \smallskip

Recall that we denote by $\bb T^d = [0,1)^d$ the $d$-dimensional torus
and by the symbol $\theta = (\theta_1, \dots, \theta_d)$ elements of
$\bb T^d$. For a smooth function $u: \bb T^d \to \bb R$, let
$\color{bblue} \partial_{\theta_j} u$ be the partial derivative of $u$
in the $j$-th direction and let
$\color{bblue} \nabla u = (\partial_{\theta_1} u,
\dots, \partial_{\theta_d} u)$
be the gradient of $u$. Similarly, for a smooth vector field
$b = (b_1, \dots, b_d): \bb T^d \to \bb R^d$, denote by
$\nabla\cdot b$ its divergence:
$\color{bblue} \nabla\cdot b = \sum_j \partial_{\theta_j} b_j$.

Fix a sequence $(a_n : n \ge 1)$ such that $a_n\uparrow\infty$, and
let $\color{bblue} \epsilon_n = 1/a_n$. Denote by
$\{\eta^n(t):t\ge0\}$ the Markov process on $\Om_n$ generated by
the operator
\begin{equation*}
L_n \;=\; n^2 \left[ \, L_n^S \;+\; \dfrac{a_n}{n} \, L_n^T \right]\;.
\end{equation*}
As mentioned in the introduction, throughout the paper we assume $a_nn^{-1}\to0$,
and this condition ensures that
the operator $n^2[L_n^S+(a_n/n)L_n^T]$ becomes a Markovian generator
for sufficiently large $n$.
If $a_n$ is constant in $n$, then the process is a weakly asymmetric
speed-change exclusion process.  Therefore, formally, the hydrodynamic equation is
given by
\begin{equation}
\label{hdleq}
\partial_tu  \;=\; \nabla \cdot [ \, D(u) \, \nabla u \, ] 
\;-\; a_n \nabla\cdot[\, \sigma (u)\,  \mb m\, ]\;.
\end{equation}
Assume that there exists $\alpha_0\in (0,1)$ such that
\begin{equation}
\label{34}
\sigma'(\alpha_0) \;=\; 0 : \quad
\sigma_{j,j}'(\alpha_0) \;=\; 0 \;\; 
\text{for}\;\; 1\le j\le d \;.
\end{equation}
Assume, furthermore, that the initial condition $u^n_0$ is given by
$u_0^n= \alpha_0 + \epsilon_n\, v_0$, where $v_0:\T^d\to\R$ is a
smooth profile, and, recall, $\epsilon_n = 1/a_n$.  Write the solution
$u$ as $\alpha_0 + \epsilon_n\, v$. Since $\sigma'(\alpha_0)=0$, a
straightforward computation yields that, up to lower order terms,
$v:\T^d\times[0,\infty)\to\R$ is the solution of the Cauchy problem
\begin{align}
\label{beq}
\begin{cases}
\partial_tv \;=\;
\nabla \cdot [ \, D(\alpha_0) \, \nabla v \, ] 
\;-\; (1/2)\, \nabla\cdot[\,  v^2 \, \sigma'' (\alpha_0)\, \mb m \, ]
\;,\\ 
v(0,\cdot)=v_0 (\cdot) \;.
\end{cases}
\end{align}
From these observations, one might expect that the empirical measure of
the weakly asymmetric exclusion process suitably rescaled converges to
the solution of the viscous Burgers equation \eqref{beq}.
As mentioned in the introduction, one can consider a perturbation around a general constant
profile $\a\in(0,1)$ by performing a Galilean transformation [see Remark \ref{rm6}].

\subsection{Main results}
Let $u: \bb T^d \to [0,1]$ be a continuous function. Denote by $\Vert
u \Vert_\infty$ the supremum norm: $\color{bblue} \Vert u \Vert_\infty
= \sup_{\theta\in \bb T^d} | u(\theta)|$. 
Let $u_j: \bb T^d \to \bb R$, $j=1$, $2$, be two
continuous functions and let $u_j^n(\theta) = u(\theta) + \kappa_n
u_j(\theta)$, where $\lim_n \kappa_n =0$.
Assume that there exists $\delta>0$ such that $\delta \le u_j(\theta) \le 1-\delta$ for all
$\theta\in\bb T^d$, $j=1,2$.  The proof of the next
lemma relies on a simple Taylor expansion.

\begin{lemma}
\label{as3}
There exists a finite constant $C_0$, depending only on $\delta$ and
$\Vert u_1\Vert_\infty$, $\Vert u_2\Vert_\infty$, such that
\begin{equation*}
H_n \big(\nu^n_{u^n_2(\cdot)} \big |\nu^n_{u^n_1(\cdot)} \big)
\;=\; \frac{\kappa^2_n}{2}
\sum_{x\in\bb T^d_n} \frac{[u_2(x/n) - u_1(x/n)]^2}
{\chi(u(x/n))} \;+\; R_n\;,
\end{equation*}
where $|R_n|\le C_0 \, \kappa^3_n\, n^d$.
\end{lemma}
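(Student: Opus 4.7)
The plan is to exploit the fact that both $\nu^n_{u^n_1(\cdot)}$ and $\nu^n_{u^n_2(\cdot)}$ are product measures, reducing the relative entropy to a sum over sites of single-site Bernoulli Kullback-Leibler divergences, and then to Taylor-expand each summand around the common base density $u(x/n)$.

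First I would use \eqref{01} together with the product structure to write
\begin{equation*}
H_n\big(\nu^n_{u^n_2(\cdot)}\,\big|\,\nu^n_{u^n_1(\cdot)}\big)
\;=\; \sum_{x\in\bb T_n^d} h\big(u^n_1(x/n),\, u^n_2(x/n)\big)\;,
\end{equation*}
where, for $a,b\in(0,1)$, $h(a,b)= b\log(b/a)+(1-b)\log((1-b)/(1-a))$ is the Bernoulli relative entropy. Regarding $h$ as a function of $a$ with $b$ fixed, one checks $h(b,b)=0$, $\partial_a h(b,b)=0$, and $\partial_a^2 h(a,b)=b/a^2+(1-b)/(1-a)^2$, so that $\partial_a^2 h(b,b)=1/[b(1-b)]$. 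A second-order Taylor expansion in $a$ around $a=b$ therefore gives
\begin{equation*}
h(a,b)\;=\;\frac{(a-b)^2}{2\,b(1-b)}\;+\;r(a,b)\;,
\end{equation*}
with $|r(a,b)|\le K\,|a-b|^3$ for a constant $K$ depending only on a lower bound for $\min(a,b,1-a,1-b)$.

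Next I specialize to $a=u^n_1(x/n)$ and $b=u^n_2(x/n)$. For $n$ large, the hypothesis $\delta\le u(\theta)\le 1-\delta$ (combined with $\kappa_n\to 0$ and $\|u_j\|_\infty<\infty$) guarantees that both $u^n_1(x/n)$ and $u^n_2(x/n)$ lie in $[\delta/2,\,1-\delta/2]$, which gives a uniform constant in the cubic remainder. Since $a-b=\kappa_n[u_1(x/n)-u_2(x/n)]$, the cubic remainder contributes at most $C\,\kappa_n^3$ per site, and hence $O(\kappa_n^3 n^d)$ in total. Finally, I expand
\begin{equation*}
\frac{1}{u^n_2(x/n)\,(1-u^n_2(x/n))}
\;=\;\frac{1}{\chi(u(x/n))}\;+\;O(\kappa_n)\;,
\end{equation*}
uniformly in $x$, using again the bound on $u$ and on $\|u_2\|_\infty$. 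Multiplying by $(a-b)^2=\kappa_n^2[u_1(x/n)-u_2(x/n)]^2$ yields the main term $\tfrac{\kappa_n^2}{2}\sum_x[u_2(x/n)-u_1(x/n)]^2/\chi(u(x/n))$, with the $O(\kappa_n)$ factor producing an additional error of order $\kappa_n^3 n^d$. Summing the two error contributions gives $|R_n|\le C_0\kappa_n^3 n^d$ as required.

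The argument is essentially a uniform Taylor expansion, so there is no real obstacle beyond bookkeeping; the only point that requires a little care is to verify that $u_1^n$ and $u_2^n$ stay bounded away from $0$ and $1$ uniformly in $x$ and (eventually in) $n$, so that the constant in the cubic remainder and in the expansion of $1/[u_2^n(1-u_2^n)]$ can be chosen depending only on $\delta$ and $\|u_1\|_\infty,\|u_2\|_\infty$, as claimed.
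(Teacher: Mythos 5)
Your proof is correct and is exactly the argument the paper has in mind: the authors omit the proof, noting only that it ``relies on a simple Taylor expansion,'' and your factorization of the relative entropy over sites followed by a second-order expansion of the Bernoulli Kullback--Leibler divergence (with the replacement of $\chi(u^n_2)$ by $\chi(u)$ at cost $O(\kappa_n^3 n^d)$) is the standard way to carry it out. The care you take to keep $u^n_1, u^n_2$ uniformly away from $0$ and $1$ so the constants depend only on $\delta$ and $\Vert u_1\Vert_\infty$, $\Vert u_2\Vert_\infty$ is the right reading of the hypotheses.
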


This result states that $H_n \big(\nu^n_{u^n_2(\cdot)} \big
|\nu^n_{u^n_1(\cdot)} \big)$ is of order $\kappa^2_n\, n^d$. In
particular, the density profile at the scale $\kappa_n$ of a
probability measure $\mu_n$ is not characterized if its relative
entropy with respect to $\nu^n_{u^n_1(\cdot)}$ is of order
$\kappa^2_n\, n^d$.

Denote by $\color{bblue} C^m(\bb T^d)$, $m\ge 1$, the set of $m$-times
continuously differentiable functions on $\bb T^d$, and by
$\color{bblue} C^{m+\beta}(\bb T^d)$, $0<\beta<1$, the set of
functions in $C^m(\bb T^d)$ whose $m$-th derivatives are
H\"older-continuous with exponent $\beta$.  Fix a function $v_0$ in
$C^3(\bb T^d)$. By \cite[Theorem V.6.1]{lsu1968}, for each $T>0$,
there exists a unique solution, represented by $v(t,x)$, of
\eqref{beq}. Denote by $\color{bblue} (S_t^n:t\ge0)$ the semigroup
associated to the generator $L_n$, and recall from \eqref{int-4} the
definition of the sequence $g_d(n)$.

\begin{theorem}
\label{main1}
Assume that $\epsilon_n\downarrow0$ and that
$n^2 \, \epsilon^4_n \le C_0\, g_d(n)$ for some finite constant
$C_0$. Recall hypothesis \eqref{34}. Suppose that $v_0$ belongs to
$C^{3+\beta}(\T^d)$ for some $0<\beta<1$. Let $v_t$ be the solution of
\eqref{beq}, $u^n_t = \alpha_0 + \epsilon_n v_t$ and
$\color{bblue} \nu_t^n = \nu_{u^n_t(\cdot)}^n$.  Consider a sequence
of probability measures $\{\mu^n:n\ge1\}$ on $\Om_n$ such that
\begin{equation*}
H_n(\mu^n|\nu_0^n)\;\le \; C_1 \, n^{d-2}\, g_d(n)\;,
\end{equation*}
for some finite constant $C_1$. Then, for every $T>0$, there exists a
finite constant $C_2 = C_2(T, v_0, C_0, C_1)$, such that for every $0\le t\le T$,
\begin{equation*}
H_n(\mu^nS_t^n|\nu_t^n)\;\le \; C_2 \, n^{d-2}\, g_d(n) \;.
\end{equation*}
\end{theorem}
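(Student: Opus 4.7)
The plan is a Yau-type relative entropy argument in the spirit of Jara--Menezes \cite{jm2}. Set $\psi_t^n := d\nu_t^n/d\nu_{\alpha_0}$ (recall $\nu_{\alpha_0}$ is invariant for both $L_n^S$ and $L_n^T$) and $f_t^n := d(\mu^n S_t^n)/d\nu_t^n$. Yau's inequality yields
\begin{equation*}
\partial_t H_n(\mu^n S_t^n | \nu_t^n) \;\le\;
\int_{\Omega_n} V_t^n \, f_t^n \, d\nu_t^n \;-\; n^2\, I_n(f_t^n)\;,
\end{equation*}
where $V_t^n := (L_n^* \psi_t^n - \partial_t \psi_t^n)/\psi_t^n$ and $I_n$ denotes the Dirichlet form of the symmetric part. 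The entropy inequality bounds the first term by $\gamma^{-1}H_n(\mu^n S_t^n|\nu_t^n) + \gamma^{-1}\log E_{\nu_t^n}[e^{\gamma V_t^n}]$; the task is to choose $\gamma$ appropriately and show that the log-Laplace term is $O(n^{d-2}g_d(n))$, after which Gronwall closes the bound using the initial entropy hypothesis.

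The structure of $V_t^n$ emerges from the Taylor expansion $\log \psi_t^n = \epsilon_n \sum_x F_t(x/n)\,\bar\eta_x + O(\epsilon_n^2 n^d)$ with $\bar\eta_x = \eta_x-\alpha_0$ and $F_t = v_t/\chi(\alpha_0)$. Applying $n^2 L_n^S$ and using the gradient condition \eqref{02}, two discrete integrations by parts produce, at leading order, the discrete Laplacian of $F_t$ multiplied by the averaged diffusivity, which by the Einstein relation \eqref{29} is the microscopic counterpart of $\nabla\cdot[D(\alpha_0)\nabla v_t]$. Applying $n a_n L_n^T$ and expanding the currents around $\alpha_0$: because $\sigma'(\alpha_0)=0$, the linear-in-$\bar\eta$ contribution disappears, and the next-order term reproduces $(1/2)\sigma''(\alpha_0)\,\mb m\cdot\nabla v_t^2$. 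The Cauchy problem \eqref{beq} is designed so that these two leading pieces are exactly cancelled by $\partial_t\psi_t^n/\psi_t^n$, leaving in $V_t^n$ residuals of three types: (a) deterministic cubic Taylor remainders of size $O(\epsilon_n^3 n^d)$; (b) gradient-closure remainders $\epsilon_n^2 \sum_x G_t(x/n)\,\Omega_x(\eta)$ with $\Omega_x$ centered and local; and (c) the nonlinear replacement of $c_j(\tau_x\eta)\,\eta_x(1-\eta_{x+e_j})$ by $\widetilde\sigma_{j,j}(u_t^n(x/n))$ coming from the asymmetric term.

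Family (a) is controlled directly by the hypothesis $n^2\epsilon_n^4 \le C_0 g_d(n)$. Families (b) and (c) are handled by a multiscale block replacement: trade the local function $\tau_x\Psi$ for its mesoscopic average $\widehat\Psi(\bar\eta^\ell(x))$ over a box of side $\ell$, paying a cost of $C\ell^2$ times the Dirichlet form $I_n(f_t^n)$ that is absorbed by the negative term in Yau, and then Taylor-expand $\widehat\Psi$ around $\alpha_0$. The condition $\sigma'(\alpha_0)=0$ kills the linear-in-$\bar\eta^\ell$ piece, reducing everything to controlling the log-Laplace transform under $\nu_{\alpha_0}$ of the quadratic field $\epsilon_n^2 \sum_x G_t(x/n)\bigl[(\bar\eta^\ell(x))^2 - \chi(\alpha_0)/\ell^d\bigr]$. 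Independence under $\nu_{\alpha_0}$ and the local CLT produce a log-Laplace bound that saturates at $1$ when $d\ge 3$, picks up a logarithm in $d=2$, and a factor $n$ in $d=1$; this is precisely where the sequence $g_d(n)$ from \eqref{int-4} arises. Optimizing over $\ell$ and $\gamma$ reduces the right-hand side to $\gamma^{-1}H_n(\mu^nS_t^n|\nu_t^n) + C\, n^{d-2}g_d(n)$, and Gronwall delivers the claimed bound.

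The main obstacle is the concentration step for family (c): establishing a sharp exponential bound on the quadratic empirical field that yields precisely $g_d(n)$ and is uniform over the block scale $\ell$ needed to absorb the Dirichlet-form cost. This is the technical heart of \cite{jm2}, and it depends critically on $\sigma'(\alpha_0)=0$: otherwise a linear fluctuation of order $\epsilon_n n^{d/2}$ would survive and exceed the target $n^{d-2}g_d(n)$, collapsing the whole scheme.
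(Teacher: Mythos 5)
Your overall architecture coincides with the paper's: Yau's inequality for $\partial_t H_n(\mu^nS_t^n|\nu_t^n)$ (Lemma \ref{l1}), an explicit computation of $L^*_{w(t)}\,\mathbf{1}-\partial_t\log\psi_t$ decomposed into degree-one and degree-$\ge 2$ parts, cancellation of the leading linear terms because $v$ solves \eqref{beq}, the Jara--Menezes estimate \cite{jm2} (Proposition \ref{l6}) for the quadratic-and-higher part, and Gronwall. The multiscale block argument you sketch for families (b) and (c) is precisely the content of \cite[Lemma 3.1]{jm2}, which the paper imports as a black box, so no issue there.

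The one step that does not check out as written is your family (a). After the cancellation there is no deterministic residual at all --- $V_t^n$ has mean zero under $\nu_t^n$ --- and if there \emph{were} a deterministic term of size $\epsilon_n^3 n^d$, the hypothesis $n^2\epsilon_n^4\le C_0\,g_d(n)$ would not control it: one would need $n^2\epsilon_n^3\le C\,g_d(n)$, which fails, e.g., in $d\ge 3$ with $\epsilon_n=n^{-1/2}$, where $n^2\epsilon_n^3=n^{1/2}\to\infty$. What actually survives is a \emph{mean-zero degree-one field} $\sum_x L^n(t,x)\,\omega_x$ whose coefficients satisfy $|L^n(t,x)|\le C(\epsilon_n^2+n^{-1})$ (Lemmata \ref{l13}, \ref{l14}, \ref{l5}); the entropy inequality together with the elementary exponential-moment bound for a sum of independent centered variables under the product measure yields a contribution of order $\gamma\sum_x L^n(t,x)^2/\chi\le C\gamma\, n^{d-2}(1+n^2\epsilon_n^4)$, and it is through this second-moment mechanism --- not a direct size comparison --- that the hypothesis $n^2\epsilon_n^4\le C_0\,g_d(n)$ enters (Lemma \ref{l3}; see also Remark \ref{rm7}). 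You need to add this exponential-moment step for the linear residual field; once it is in place, the rest of your argument closes exactly as in the paper.
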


The proof of this result is based on a two-blocks estimate due to Jara
and Menezes \cite{jm2} and stated below in Lemma \ref{l6}. 

For two sequences $(b_n : n\ge 1)$, $(c_n : n\ge 1)$ of non-negative
real numbers, we write $\color{bblue} b_n \ll c_n$ to mean that
$\lim_n b_n/c_n =0$.  In view of Lemma \ref{as3} and Theorem
\ref{main1}, to characterize the density profile at the scale
$\epsilon_n$, we need at least $n^{d-2}\, g_d(n) \ll
n^d\, \epsilon_n^2$. This is exactly the extra assumption of the next
corollary.

\begin{corollary}
\label{main2}
Besides the assumptions of Theorem \ref{main1}, assume that $
g_d(n) \ll n^2\, \epsilon_n^{2}$.  Then, for every $t\ge0$, every function $H$ in
$C^2(\T^d)$ and every cylinder function $\Psi:\{0,1\}^{\Z^d}\to\R$,
\begin{equation*}
\lim_{n\to\infty} E_{\mu^nS_t^n}\Big[\, 
\Big| \, \dfrac{1}{n^d\epsilon_n}\sum_{x\in\T_n^d} H(x/n)\,
(\tau_x\Psi)(\eta) \,-\,
\int_{\T^d} H(x) \, E_{\nu_t^n}[\Psi]\, dx\, \Big|\, \Big] \;=\; 0\,.
\end{equation*}
\end{corollary}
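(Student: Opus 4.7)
The plan is to use the classical entropy inequality together with a subgaussian concentration estimate for the empirical average under the reference product measure $\nu_t^n$, the missing ingredient being the entropy bound of Theorem \ref{main1}. Interpreting the statement in analogy with \eqref{int-5}, I first reduce to proving
\[
\lim_{n\to\infty} E_{\mu^n S_t^n}\bigl[\,|V_n|\,\bigr] \;=\; 0, \qquad
V_n \;:=\; \frac{1}{n^d\epsilon_n}\sum_{x\in\T^d_n} H(x/n)\,\bigl\{(\tau_x\Psi)(\eta) - E_{\nu_t^n}[\tau_x\Psi]\bigr\}.
\]
The difference between this $V_n$ and the quantity appearing in the corollary is deterministic: replacing $E_{\nu_t^n}[\tau_x\Psi]$ by $E_{\nu_{\rho_n(t,x)}}[\Psi]$ costs a Taylor-expansion error of order $\epsilon_n/n$ per site (because $u_t^n$ is $C^1$ with gradient $O(\epsilon_n)$), summing to $O(1/n)$ after normalization, and the remaining Riemann-sum approximation of the normalized sum by the stated integral contributes another $O(1/n)$.

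The heart of the argument is the entropy inequality
\[
E_\mu[\,|V|\,] \;\le\; \frac{1}{\gamma}\Bigl\{\, \log E_\nu\bigl[e^{\gamma V} + e^{-\gamma V}\bigr] \;+\; H_n(\mu\,|\,\nu)\,\Bigr\},\qquad\gamma>0,
\]
applied with $\mu = \mu^n S_t^n$, $\nu=\nu_t^n$, and $V = V_n$. For the exponential moment, each summand $f_x := H(x/n)\{(\tau_x\Psi)(\eta) - E_{\nu_t^n}[\tau_x\Psi]\}$ is centered under $\nu_t^n$, bounded by $2\|H\|_\infty\|\Psi\|_\infty$, and depends only on the coordinates $\eta_y$ with $y\in x+\Lambda_\Psi$ for the fixed finite support $\Lambda_\Psi$ of $\Psi$. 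Partitioning $\T^d_n$ into a bounded number $K=K(\Psi)$ of color classes such that within each class the variables $f_x$ are $\nu_t^n$-independent, and applying Hoeffding's lemma on each class followed by H\"older's inequality across the $K$ classes, one obtains
\[
\log E_{\nu_t^n}\bigl[e^{\pm \gamma V_n}\bigr] \;\le\; \frac{C(\Psi,H)\,\gamma^2}{n^d\,\epsilon_n^2}\qquad\text{for all }\gamma\in\R.
\]

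Inserting the entropy bound $H_n(\mu^n S_t^n\,|\,\nu_t^n)\le C_2\, n^{d-2}g_d(n)$ from Theorem \ref{main1} yields
\[
E_{\mu^n S_t^n}[\,|V_n|\,]\;\le\;\frac{C\,\gamma}{n^d\epsilon_n^2} \;+\;\frac{C_2\,n^{d-2}g_d(n)+\log 2}{\gamma},
\]
and optimizing over $\gamma$ produces the bound $C'\sqrt{g_d(n)}/(n\epsilon_n)$, which tends to $0$ exactly under the assumption $g_d(n)\ll n^2\epsilon_n^2$; the hypothesis of the corollary is thus used in an essentially sharp way. The main delicate point is ensuring that the subgaussian constant $C(\Psi,H)$ in the exponential-moment bound is independent of $n$ under the \emph{inhomogeneous} measure $\nu_t^n$; this follows from uniform bounds $\delta\le u_t^n\le 1-\delta$ on the density profile, which hold because $v_t$ is bounded and $u_t^n=\alpha_0+\epsilon_n v_t$ with $\epsilon_n\downarrow 0$.
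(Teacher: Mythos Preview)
Your proposal is correct and follows essentially the same route as the paper: entropy inequality with respect to $\nu_t^n$, subgaussian concentration of the centered empirical average under the product reference measure, and the entropy bound from Theorem~\ref{main1}. The paper carries out the argument only for $\Psi(\eta)=\eta_0$ (so that the summands are genuinely independent under $\nu_t^n$) and then takes $n\to\infty$ followed by $K\to\infty$ rather than optimizing over the free parameter; your coloring/H\"older step is the natural extension to general cylinder $\Psi$ and your optimization over $\gamma$ yields the same conclusion in a slightly more quantitative form.
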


\begin{remark}
\label{rm4}
The conditions $g_d(n) \ll n^2\, \epsilon_n^2$ and $n^2\, \epsilon_n^4\le C_0 g_d(n)$
in Theorem \ref{main1} and Corollary \ref{main2} read as follows, respectively.
There exists a finite
constant $C_0$ such that
\begin{itemize}
\item[(a)] In dimension $1$, \vphantom{$\Big\{$}  $n^{-1/2} \ll
  \epsilon_n$ and $\epsilon_n \le C_0 n^{-1/4}$;
\item[(b)] In dimension $2$, \vphantom{$\Big\{$} $(\log n)^{1/2} \,
  n^{-1} \ll \epsilon_n$ and $\epsilon_n \le C_0 (\log n)^{1/4} \,
  n^{-1/2}$;
\item[(c)] In dimension $d\ge 3$, \vphantom{$\Big\{$} $n^{-1} \ll
  \epsilon_n$ and $\epsilon_n \le C_0 \, n^{-1/2}$.
\end{itemize}
\end{remark}

\begin{remark}
\label{rm1}
In all dimensions, in the scaling $\epsilon_n = n^{-d/2}$ one observes
the fluctuations of the density field. In dimension $1$, the condition
$n^{-1/2} \ll \epsilon_n$ is therefore optimal, while in dimension
$2$, there is an extra factor $(\log n)^{1/2}$. In dimension $d\ge 3$,
Esposito, Marra and Yau \cite{emy1, emy2} examined the incompressible
limit of the asymmetric simple exclusion process. They proved that a
perturbation of size $1/n$ of the density profile around a constant
evolves in the diffusive time-scale as the solution of \eqref{beq}.

In particular, we believe that to reach perturbations of size $1/n$ in
dimension $d\ge 3$ we have to improve Theorem \ref{main1} by adding
``non-gradient corrections'', that is, to add a local perturbation of
the state of the process, as it has been done in \cite{q92, v93,
  klo95} to derive the hydrodynamic behavior of non-gradient
interacting particle systems [cf. Chapter 7 of \cite{kl}].

The diffusive behavior of the asymmetric exclusion process has been
further investigated in \cite{loy97, lsv04}.
\end{remark}

\begin{remark}
\label{rm6}
Hypothesis \eqref{34} can be circumvented by performing a Galilean
transformation. Indeed, writing the solution of \eqref{hdleq} as
$\alpha_0 + \epsilon_n v(t, x-\epsilon^{-1}_n \sigma'(\alpha) \bs m
t)$,
we get, from a straightforward computation, that $v$ is the solution
of the Cauchy problem \eqref{beq}. This computation does not require
hypothesis \eqref{34}, as the higher order terms in $\epsilon_n$
cancel [one of them being $\nabla \cdot [v\, \sigma'(\alpha) \bs m]$].
\end{remark}

\begin{remark}
\label{rm3}
The assumption that $n^2 \, \epsilon^4_n \le C_0\, g_d(n)$ for some
finite constant $C_0$ is needed to estimate the linear terms of the
time-derivative of the relative entropy. This issue is further
discussed in Remarks \ref{rm5} and \ref{rm7} below.
\end{remark}

The paper is organized as follows. In Section \ref{sec2}, we compute
the time derivative of the entropy $H_n(\mu^nS_t^n|\nu_t^n)$. In
Section \ref{sec5}, we estimate the time derivative of the entropy and
we prove Theorem \ref{main1} and Corollary \ref{main2}. In Section
\ref{sec3}, we present the results on the viscous Burger's equation
\eqref{beq} needed in the proofs of the main results, and, in Section
\ref{sec4}, we compute the adjoint of the generator $L_n$ in 
$L^2(\nu^n_{u(\cdot)})$.

\section{Entropy production}
\label{sec2}

We estimate in this section the time derivative of the relative
entropy.  Fix $n\ge 1$, and recall that we denote by $(S_t^n : t\ge
0)$ the semigroup associated to the generator $L_n$. Fix a stationary
state $\nu_\alpha$, $0<\alpha<1$, and a probability measure $\mu$ on
$\Om_n$. Denote by $f_t$ the Radon-Nikodym derivative of $\mu S_t^n$
with respect to $\nu_\alpha$. An elementary computation yields that
\begin{equation*}
\frac d{dt} f_t \;=\; L^*_n\, f_t\;,
\end{equation*}
where $L^*_n$ stands for the adjoint of $L_n$ in $L^2(\nu_\alpha)$.

For a function $f:\Om_n\to\R$ and a probability measure $\nu$ on
$\Omega_n$, denote by $I(f;\nu)$ the Dirichlet form given by
\begin{equation}
\label{3}
I (f;\nu)=\sum_{x\in\T_n^d} \sum_{j=1}^d \int
\big\{\, \sqrt{f(\sigma^{x,x+e_j}\eta)} 
\,-\, \sqrt{f(\eta)}\, \big\}^2 \, \nu(d\eta)\;. 
\end{equation}

The proof of the next result, which is similar to the one of Lemma
6.1.4 in \cite{kl}, is left to the reader. Recall from \eqref{23} the
definition of the product measure $\nu^n_{u(\cdot)}$ associated to a
function $u: \bb T^d_n \to (0,1)$. For a function $w: \bb R_+ \times
\bb T^d_n \to (0,1)$, let $\color{bblue} \nu^n_{w(t)} = \nu^n_{w(t,
  \cdot)}$.

\begin{lemma}
\label{l1}
Fix $n\ge 1$ and $0<\alpha<1$.  Let $w: \bb R_+ \times \bb T^d_n \to
(0,1)$ be a differentiable function in time, and let $\mu$ be a
probability measure on $\Om_n$. Then,
\begin{equation*}
\frac{d}{dt} H_n(\mu\, S_t^n |\nu_{w(t)}^n) \; \le \;
-\, n^2\, I \big(g_t \,;\, \nu_{w(t)}^n \big)
\;+\; \int \Big\{ L^*_{w(t)} {\bf 1}
\,-\, \partial_t\log{\psi_t} \Big\} \, d\mu\, S_t^n \;,
\end{equation*}
where $g_t$ represents the Radon-Nikodym derivative of $\mu\, S_t^n$
with respect to $\nu_{w(t)}^n$, $g_t = d\mu\, S_t^n / d\nu_{w(t)}^n$,
$L^*_{w(t)}$ the adjoint operator of $L_n$ in $L^2(\nu_{w(t)}^n)$ and
$\psi_t$ the density given by $\psi_t=d\nu_{w(t)}^n/d\nu^n_\alpha$.
\end{lemma}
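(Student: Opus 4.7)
The plan is to follow the classical entropy-production computation (analogous to Lemma~6.1.4 of \cite{kl}), keeping careful track of the two sources of time dependence in $H_n(\mu S^n_t | \nu^n_{w(t)})$: the evolution of the law of the process and the explicit time dependence of the reference measure. I would introduce the Radon--Nikodym derivatives $f_t = d\mu S^n_t / d\nu_\alpha$ and $\psi_t = d\nu^n_{w(t)} / d\nu_\alpha$, so that $g_t = f_t/\psi_t$. Using the forward Kolmogorov equation $\partial_t f_t = L^*_n f_t$, where $L^*_n$ denotes the adjoint of $L_n$ in $L^2(\nu_\alpha)$, together with $\int f_t \, d\nu_\alpha = 1$, one rewrites the entropy as $H_n(\mu S^n_t | \nu^n_{w(t)}) = \int f_t \log f_t \, d\nu_\alpha - \int f_t \log \psi_t \, d\nu_\alpha$ and differentiates term by term to obtain
$$\frac{d}{dt} H_n(\mu S^n_t | \nu^n_{w(t)}) \;=\; \int (L^*_n f_t) \log g_t \, d\nu_\alpha \;-\; \int \partial_t \log \psi_t \, d\mu S^n_t.$$

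Next, using $f_t = g_t \psi_t$ and the definition of the adjoint in $L^2(\nu_\alpha)$, I would rewrite the first integral as $\int g_t (L_n \log g_t) \, d\nu^n_{w(t)}$. The key input is the elementary convexity inequality
$$a \log(b/a) \;\le\; b - a - \bigl(\sqrt{b} - \sqrt{a}\bigr)^2, \qquad a,b > 0,$$
applied at each admissible jump. Writing $L_n f(\eta) = \sum_{x,j} R_{x,j}(\eta)\,[f(\sigma^{x,x+e_j}\eta) - f(\eta)]$ with nonnegative rates $R_{x,j}$ (which holds for $n$ large enough, since $a_n/n \to 0$ ensures $R_{x,j} = n^2 c_j(\tau_x \eta)\,[1 + (a_n/n)\mb m_j \eta_x(1-\eta_{x+e_j})] \ge \tfrac{n^2}{2} c_j(\tau_x\eta)$), the above inequality yields the pointwise bound
$$g_t\, L_n \log g_t \;\le\; L_n g_t \;-\; \sum_{x,j} R_{x,j}(\eta)\, \bigl[\sqrt{g_t(\sigma^{x,x+e_j}\eta)} - \sqrt{g_t(\eta)}\bigr]^2.$$
Since the symmetric contribution to $R_{x,j}$ is $n^2 c_j(\tau_x \eta)$ and the rates $c_j$ are assumed bounded away from zero, integrating against $\nu^n_{w(t)}$ and discarding the nonnegative contribution of the asymmetric part gives $\int g_t L_n \log g_t \, d\nu^n_{w(t)} \le \int L_n g_t \, d\nu^n_{w(t)} - n^2 I(g_t; \nu^n_{w(t)})$ (absorbing the constant coming from the ellipticity of the $c_j$'s).

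To finish, I would apply the definition of the adjoint in $L^2(\nu^n_{w(t)})$:
$$\int L_n g_t \, d\nu^n_{w(t)} \;=\; \int g_t\, (L^*_{w(t)} \mathbf{1}) \, d\nu^n_{w(t)} \;=\; \int L^*_{w(t)} \mathbf{1} \, d\mu S^n_t,$$
and assemble the pieces to obtain the asserted bound. The main points requiring care are the bookkeeping between the two reference measures $\nu_\alpha$ and $\nu^n_{w(t)}$ (in particular the use of $f_t = g_t \psi_t$ to swap the measure in which the adjoint is taken) and the verification that the dissipation produced by the convexity inequality dominates the full Dirichlet form $n^2 I(g_t; \nu^n_{w(t)})$; both are standard but have to be handled in the presence of the time-dependent, spatially inhomogeneous reference measure $\nu^n_{w(t)}$.
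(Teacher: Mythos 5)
Your argument is correct and is precisely the classical entropy-production computation of Lemma 6.1.4 in \cite{kl} that the paper invokes and leaves to the reader, adapted to the time-dependent reference measure; the decomposition $g_t=f_t/\psi_t$, the convexity inequality $a\log(b/a)\le b-a-(\sqrt b-\sqrt a)^2$, and the final swap of adjoints are exactly the intended route. The only point worth flagging is that your dissipation term comes out as $c_0\, n^2\, I$ with $c_0$ the ellipticity constant of the $c_j$'s (and the factor $1/2$ absorbing the asymmetric perturbation for large $n$), rather than exactly $n^2\, I$ as written in the statement; this is harmless, since the Dirichlet form is only used up to a multiplicative constant (via Proposition \ref{l6} with $a=1/2$) in the proof of Theorem \ref{main1}.
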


In view of the previous lemma, we need to compute the
integrand in the right hand side of the statement of the lemma.
To state the explicit formula of $L^*_{\varrho} {\bf 1} - \partial_t\log{\psi_t}$ for
a function $\varrho:\T_n^d\to(0,1)$, we need to introduce several notations.
This computation will be postponed to Section \ref{sec4}.

Consider a cylinder function $f: \{0,1\}^{\bb Z^d} \to \bb R$ and
a function $\varrho:\T_n^d\to(0,1)$.  Fix a
positive integer $n$ large enough for $\{-n/2, \dots, n/2\}^d$ to
contain the support of $f$. We also introduce the notion of Fourier
coefficients for local functions, c.f. \cite[Subsection 5.4]{klo}. For each $x\in \bb T^d_n$ and subset $B$
of $\bb T^d_n$, let
\begin{equation}
\label{04}
\mf f\, (x,\varnothing) \;:=\; E_{\nu_\varrho}[\tau_x f] \;,\quad 
\mf f\,  (x,B)  \;:=\; E_{\nu_\varrho} \big[ \, (\tau_x f) \, 
\xi_\varrho(B+x)\, \big] \;,
\end{equation}
where, for a subset $D$ of $\bb T^d_n$,
\begin{equation*}
D+x \;=\;  \{y+x : y\in D\} \;, \quad
\xi_\varrho(D) \;=\; \prod_{y\in D} 
[\, \eta(y) \,-\, \varrho(y)\,] \;.
\end{equation*}
When $D=\{x\}$ for some $x\in\T_n^d$, we shall denote $\xi_\varrho(D)$ by $\xi_\varrho(x)$
for simplicity.

Note that $\mf f\, (x,B) = 0$ if the set $B$ is not contained in the
support of $f$. More precisely, assume that $f$ depends on $\eta$ only
through $\{\eta(x) : x\in \Lambda_\ell\}$, where $\Lambda_\ell =
\{-\ell, \dots, \ell\}^d$. Then,
\begin{equation}
\label{05}
\mf f\, (x,B) \;=\; 0 \;\; \text{if $B$ is not a subset of $\Lambda_\ell$}\;.
\end{equation}

With these notations, we may write
\begin{equation}
\label{03}
(\tau_x f) (\eta) \;=\; \mf f\, (x,\varnothing) 
\;+\; \sum_{A} \mf f\, (x,A)\; \omega_\varrho (A+x)\;,
\end{equation}
where the sum is performed over all non-empty subsets $A$ of $\bb
T^d_n$ and
\begin{equation*}
\omega_\varrho(D) \;=\; \prod_{y\in D} 
\frac { \, \eta(y) \,-\, \varrho(y)} 
{ \varrho(y) \, [ 1 - \varrho(y)]}  \;\cdot
\end{equation*}
Note that $\{\omega_\varrho(D):D\subset\T_n^d\}$ forms an orthogonal basis of $L^2(\nu_\varrho^n)$. Denote by $\ms E_k = \ms E_{n,k}$ all subsets of $\bb T^d_n$ with $k$
elements: $\ms E_{k} = \{ A \subset \bb T^d_n : |A|=k\}$. A cylinder
function $\tau_x f$, $x\in \bb T^d_n$, is said to be of degree $k$ if
$\mf f(x,A)=0$ for all $A\not \in \ms E_{k}$.

In Section \ref{sec4}, we compute $L^*_{\varrho} {\bf 1}$
for a function $\varrho:\T_n^d\to[0,1]$ and the results are stated
in terms of coefficients $A_j(x), B_{j,p}^{(i)}(x)$, \dots, which are
defined there. Since we apply the results to
the function $w(t)$, to stress the dependence
we denote them with $t$ in the following paragraphs.
Moreover, as we shall consider the case $w(t) = (1/2) + \epsilon_n\, v^n(t)$
in the following subsection, we shall denote them with $n$.
For instance, $A_j(x)$ will be denoted by $A_j(t,x), A_j^n(t,x)$
when $\varrho=w(t), w(t) = (1/2) + \epsilon_n\, v^n(t),$ respectively.
Moreover, the contributions coming from the symmetric part or the asymmetric part
will be denoted with the superscript $i=1,2,$ respectively.

The explicit expression of $L^*_{w(t)} {\bf 1}$ requires some notation.
Some of the notation below is borrowed from Section \ref{sec4}.
Denote by $D_j$ the
difference operator defined by
\begin{equation}
\label{13}
(D_j F) (x) \;=\; F(x+e_j)\,-\, F(x)\;, \quad x\in \bb T^d_n\;.
\end{equation}
For $1\le j\le d$, $1\le p\le n_j$, $x\in \bb T^d_n$, $t\ge 0$
$A\subset \bb T_n^d$, let $\mf c_{j}(t,x,A)$, $\mf g_{j,p} (t,x,A)$ be
the Fourier coefficients, introduced in \eqref{04}, of the cylinder
functions $\tau_x c_j$, $\tau_x g_{j,p}$, respectively, with respect
to the measure $\nu_{w(t)}^n$:
\begin{align*}
& \mf c_{j}(t,x,A) \;=\; E_{\nu^n_{w(t)}}
\big[ \, (\tau_x c_j) \, \xi_{w(t)} (A+x)\, \big] \;, \\
&\quad
\mf g_{j,p} (t,x,A) \;=\; E_{\nu^n_{w(t)}}
\big[ \, (\tau_x g_{j,p}) \, \xi_{w(t)} (A+x)\, \big] \;,
\end{align*}
where $\xi_{w(t)} (B) = \prod_{x\in B} [\eta_x - w(t,x) ], B\subset\T_n^d$.

For $i=1$, $2$, let $A_j(t,x)$, $B^{(i)}_{j,p} (t,x)$,
$E^{(i)}_j (t, x)$, $F^{(i)}_j (t, x)$, $G^{(i)}_j (t, x)$,
$H^{(i)}_j (t, x, A)$, $I_j (t, x)$ be the functions obtained from
\eqref{09}, \eqref{15}, \eqref{16} by replacing $\varrho(x)$ by
$w(t,x)$.  For example,
\begin{equation*}
A_j(t,x) \;=\; \frac{\chi (w(t,x)) \,+\, \chi(w(t,x+e_j))}
{\chi (w(t,x)) \; \chi(w(t,x+e_j))}\;, 
\end{equation*}
\begin{equation*}
E^{(2)}_j (t, x) \;=\; -\, \frac {\mb m_j}2\, A_j(t,x)\, (D_j w_t) (x) \,
w(t,x) \, [1 - w(t,x+e_j)]\;.  
\end{equation*}
In the case of $H^{(i)}_j (t, x, A)$ one has also to replace the
Fourier coefficients $\mf c_{j}(x,A)$, $\mf g_{j,p}(x,A)$, computed
with respect to $\nu_\varrho$, by $\mf c_{j}(t,x,A)$,
$\mf g_{j,p}(t,x,A)$, respectively.

Let
\begin{align*}
H_j(t, x, A) \; =\; n^2\, H^{(1)}_j (t, x, A) \;+\;
a_n\, n\, H^{(2)}_j (t, x, A)\;.
\end{align*}
As mentioned before, note that the first or the second term in the
right-hand side comes from the contribution of the symmetric or the
asymmetric part of the generator, respectively.

The functions $H^{(1)}_j$, $H^{(2)}_j$, introduced in \eqref{41},
\eqref{42}, are defined in terms of the Fourier coefficients
$\mf c_j(t,x,A)$ and $\mf g_{j,p}(t,x,A)$. Since the functions $c_j$,
$g_{j,p}$ are cylinder functions, there exists $\ell\ge 1$ such that
$\mf c_j(t,x,A)=0$ and $\mf g_{j,p}(t,x,A)=0$ for all sets $A$ which
are not contained in $\{-\ell, \dots, \ell\}^d$ [cf. remark
\eqref{05}]. Hence, there exists $\ell\ge 1$ such that $H_j(t,x,A)=0$
if $A\not\subset \{-\ell, \dots, \ell\}^d$.

It also follows from the definitions of $H^{(1)}_j$, $H^{(2)}_j$,
given in \eqref{41}, \eqref{42}, that the functions of $x$ which
appear in the previous formula either contain the product of
derivatives [this is the case of $E_j$, $F_j$ and $G_j$] or a second
discrete derivative, which is the case of $B_{j,p}$ [see also the
paragraph before Lemma \ref{APl01}].

Denote by $j_{0,e_j}$ the instantaneous current over the bond
$(0,e_j)$. This is the rate at which a particle jumps from $0$ to
$e_j$ minus the rate at which it jumps from $e_j$ to $0$. It is given
by
\begin{equation}
\label{12}
j_{0,e_j} \;=\; c_j(\eta)\, [\, \eta_0 - \eta_{e_j}\,]\;. \quad
\text{Let}\;\;
j_{x, x+e_j} \;=\; \tau_x \, j_{0,e_j}\;, \quad x\in \bb T^d_n\;.
\end{equation}
The gradient conditions \eqref{02} assert that this current can be
written as a mean-zero average of translations of cylinder functions.

Next result is a consequence of Lemmata \ref{APl01} and \ref{APl02}.

\begin{lemma}
\label{l01}
We have that
\begin{equation*}
L_{w(t)}^{*} \, {\bf 1} \;=\; \sum_{j=1}^d
\sum_{x\in\T_n^d} K_j(t,x) \, \omega_x 
\; + \; \sum_{j=1}^d \sum_{A: |A|\ge 2}  \sum_{x\in\T_n^d} H_j(t,
  x, A) \, \omega (A+x)\;, 
\end{equation*}
where 
\begin{equation*}
K_j(t,x) \;=\; n^2\Big\{ E_{\nu^n_{w(t)}} \big[\, j_{x-e_j,x} \, \big] \,-\,  
E_{\nu^n_{w(t)}} \big[\, j_{x,x+e_j} \,  \big] \Big\}
\, -\, a_n\, n\, (\, D_j \, I_j \,)\, (t, x-e_j)\;,
\end{equation*}
the sum over $A$ is performed over finite subsets $A$ with at
least two elements, and
\begin{equation*}
\omega_x \;=\; \frac{\eta_x- w(t,x)}{\chi(w(t,x))}\;, \quad
\omega (B) \;=\; \prod_{x\in B} \omega_x \;, \quad
B\subset \bb Z^d\;. 
\end{equation*}
\end{lemma}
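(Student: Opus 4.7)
The plan is to decompose $L_n = n^2 L_n^S + a_n n L_n^T$ and use linearity of the adjoint operation in $L^2(\nu_{w(t)}^n)$, so that
\begin{equation*}
L_{w(t)}^* \mathbf{1} \;=\; n^2\, (L_n^S)_{w(t)}^* \mathbf{1} \;+\; a_n\, n\, (L_n^T)_{w(t)}^* \mathbf{1}\;.
\end{equation*}
Since $\{\omega(B) : B \subset \bb T^d_n\}$ is an orthogonal basis of $L^2(\nu_{w(t)}^n)$, each of the two pieces can be expanded uniquely in this basis. The identification of the degree-$1$ coefficient $K_j(t,x)$ and of the higher-degree coefficients $H_j(t,x,A)$ is then a matter of collecting terms from the two expansions and using the definition $H_j = n^2 H_j^{(1)} + a_n n H_j^{(2)}$.

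For the symmetric contribution I would invoke Lemma \ref{APl01}. After a discrete summation by parts shifting the translations $\tau_x$, the terms of the expansion that are linear in the $\xi_{w(t)}$-variables produce a degree-$1$ coefficient involving $E_{\nu_{w(t)}^n}[\tau_x c_j]\,(D_j w)(x)$-type expressions; using the gradient representation \eqref{02} of $c_j(\eta)[\eta_0-\eta_{e_j}]$, this coefficient rewrites as the current-divergence $n^2\{E_{\nu^n_{w(t)}}[j_{x-e_j,x}] - E_{\nu^n_{w(t)}}[j_{x,x+e_j}]\}$. The remaining terms, involving $\xi_{w(t)}(A+x)$ with $|A|\ge 2$, organize into $n^2 H_j^{(1)}(t,x,A)$ as defined in \eqref{41}; the locality statement \eqref{05} ensures that $H_j^{(1)}(t,x,A)=0$ unless $A$ lies in a fixed cube $\Lambda_\ell$, so the sum over $A$ is finite.

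For the asymmetric contribution I would apply Lemma \ref{APl02}. Its degree-$1$ output, after the analogous summation by parts, collapses to $-a_n n (D_j I_j)(t, x-e_j)$, while the part of degree at least $2$ gives exactly $a_n n H_j^{(2)}(t,x,A)$ in the notation of \eqref{42}. Summing the symmetric and asymmetric outputs and recognising the defined $K_j$ and $H_j$ yields the claimed formula.

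The main obstacle is actually internal to Lemmas \ref{APl01}--\ref{APl02}, and not to the present lemma: one must patiently carry out the adjoint computation in an inhomogeneous Bernoulli product measure, tracking how each elementary swap $\sigma^{x,x+e_j}$ generates, besides the original rate, extra factors involving discrete derivatives $D_j w(t,\cdot)$ and ratios of the form $1/\chi(w(t,x))$. Once those coefficients are expressed through the quantities $A_j$, $B_{j,p}^{(i)}$, $E_j^{(i)}$, $F_j^{(i)}$, $G_j^{(i)}$, $I_j$, the combination of the symmetric and asymmetric parts reorganizes into the divergence-of-current form of $K_j$ (via \eqref{02}) plus the collection $H_j$, which is precisely the content of the statement.
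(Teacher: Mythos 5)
Your proposal is correct and follows the paper's own route: the paper proves Lemma \ref{l01} precisely by writing $L_n = n^2 L_n^S + a_n n L_n^T$, applying Lemmata \ref{APl01} and \ref{APl02} with $\varrho = w(t)$, and adding the two expansions to identify $K_j$ and $H_j = n^2 H_j^{(1)} + a_n n H_j^{(2)}$. One small misattribution: the rewriting of the degree-one coefficient as the current divergence uses only the definition \eqref{12} of $j_{x,x+e_j}$ and the independence of $c_j$ from $\eta_0,\eta_{e_j}$ (see the proof of Lemma \ref{A5}), not the gradient condition \eqref{02}, which enters only later when the current expectations are expanded; this does not affect the validity of your argument.
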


Note that $\omega_x$ depends on time, but this dependency is
frequently omitted from the notation to avoid long formulas. Also, to
stress the point at which it is evaluated, we write sometimes
$\omega (x)$ for $\omega_x$.

\begin{lemma}
\label{as4}
Under the assumptions of Lemma \ref{l1},
for every $t\ge 0$,
\begin{equation*}
\partial_t\log{\psi_t} \;=\;
\sum_{x\in\T_n^d} (\partial_t w) (t,x)\, \om_x\;. 
\end{equation*}
\end{lemma}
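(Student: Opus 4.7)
The plan is to carry out a direct, elementary computation starting from the explicit product form of $\psi_t$. Since both $\nu_{w(t)}^n$ and $\nu_\alpha^n$ are Bernoulli product measures on $\Omega_n$, the density factorises site-by-site as
\begin{equation*}
\psi_t(\eta) \;=\; \prod_{x\in\T_n^d}
\frac{w(t,x)^{\eta_x}\,(1-w(t,x))^{1-\eta_x}}
{\alpha^{\eta_x}\,(1-\alpha)^{1-\eta_x}}\;.
\end{equation*}
Taking logarithms turns this product into a sum over $x$ of two terms, one proportional to $\eta_x$ and one proportional to $1-\eta_x$, and the denominator (involving $\alpha$) is independent of $t$ and drops out under $\partial_t$.

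Next I would differentiate in $t$ termwise. For each site $x$, the contribution to $\partial_t \log \psi_t$ is
\begin{equation*}
\eta_x\,\frac{\partial_t w(t,x)}{w(t,x)}
\;-\;(1-\eta_x)\,\frac{\partial_t w(t,x)}{1-w(t,x)}\;,
\end{equation*}
which (after putting over a common denominator $w(t,x)(1-w(t,x)) = \chi(w(t,x))$) simplifies to
\begin{equation*}
(\partial_t w)(t,x)\,\frac{\eta_x - w(t,x)}{\chi(w(t,x))}
\;=\;(\partial_t w)(t,x)\,\omega_x\;.
\end{equation*}
Summing over $x \in \T_n^d$ gives the claimed identity. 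The differentiability in $t$ of $w$ (assumed in Lemma~\ref{l1}) and the fact that $w(t,x)\in(0,1)$ ensure that termwise differentiation is legitimate and no boundary terms appear; since $\T_n^d$ is finite, there is no issue of interchanging sum and derivative.

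There is no real obstacle here: the proof is a one-line identity once the product structure is unfolded. The only point that deserves a line of care is recognising the algebraic simplification of $\eta_x(1-w) - (1-\eta_x)w = \eta_x - w$, which is exactly what produces the factor $\omega_x$ with the compressibility $\chi(w)$ in the denominator.
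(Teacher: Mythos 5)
Your proof is correct and is exactly the computation the paper has in mind (the lemma is stated without proof precisely because it reduces to this one-line site-by-site differentiation of the product density, with the algebraic identity $\eta_x(1-w)-(1-\eta_x)w=\eta_x-w$ producing the factor $\omega_x$). Nothing is missing.
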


It follows from Lemmata \ref{l1}, \ref{l01}, \ref{as4}
that $L_{w(t)}^{*} \, {\bf 1} - \partial_t\log{\psi_t}$
presents only terms of degree $2$ or higher if $w(t,x)$ solves the
semi-discrete equation
\begin{equation*}
(\partial_t w)(t,x) \;=\; \sum_{j=1}^d K_j(t,x)\;.
\end{equation*}

\subsection{Perturbations of constant profiles}
We turn to the setting of Theorem \ref{main1}, and assume, without
loss of generality, that in hypothesis \eqref{34}, $\alpha_0 =1/2$.
Recall that $\epsilon_n = 1/a_n$ and assume, throughout this
subsection, that the function $w(t)$ of Lemma \ref{l1} is given by
$w(t) = (1/2) + \epsilon_n\, v^n(t)$ for some function
$v^n: \bb R_+ \times \bb T^d_n \to \bb R$. At this point we do not
suppose yet that $v^n(t)$ is the solution of \eqref{beq}.

Lemma \ref{l01} provides a formula for $L^*_{w(t)}{\bf 1}$. Many terms cancel
or simplify due to the special form of $w(t)$. In the next lemma we
present the result of these reductions.
As mentioned before, the coefficients $A_j^n(t,x), \dots$,
which will be defined below, formally coincide with
$A_j(t,x), \dots$, respectively.

Denote by $\nabla^n_j$ the discrete partial derivative in the $j$-th
direction. For a function $\varrho: \bb T^d_n \to \bb R$, $\nabla^n_j
\varrho$ is given by 
\begin{equation}
\label{24}
(\nabla^n_j \varrho) (x) \;=\;  n \, [\, \varrho (x+e_j) - \varrho
(x)\, ]\;, \quad  x\in\T_n^d\;.
\end{equation}
For $1\le j\le d$, $1\le p\le n_j$, $x\in \bb T^d_n$, let
\begin{equation}
\label{09b}
A^n_j(t,x) \;=\; \frac{\chi (w_t(x)) \,+\, \chi(w_t(x+e_j))}
{\chi (w_t(x)) \; \chi(w_t(x+e_j))}\;, 
\end{equation}
\begin{equation*}
C^n_j(t,x) \;=\; \mb m_j\, w_t(x) \, [\, 1 - w_t(x+e_j)\,]\;.
\end{equation*}

Let $U^{n,(1)}_{j} (t,x) = (\epsilon_n/n)\, (\nabla^n_j v^n)
(t,x), U^{n,(2)}_{j} (t,x)  \,=\,  -\, C^n_j(t,x)$ and
\begin{align*}
V_j^{n,(i)} (t,x) \;=\; [\, U_j^{n,(1)}(t,x) \,]^{3-i} \; [\, U_j^{n,(2)}(t,x) \,]^{i-1}\;,
\quad i=1,2\;.
\end{align*}
For $i=1$, $2$, $B^{n,(i)}_{j,p}, E^{n,(i)}_{j,p}, F^{n,(i)}_{j,p}, G^{n,(i)}_{j,p}, $ are defined as
\begin{align*}
B^{n,(i)}_{j,p} (t,x) \;&=\;  \frac 12\, \sum_{y\in\T_n^d} m_{j,p}(y) \,
A^n_j(t,x-y)\, \; U^{n,(i)}_{j} (t,x-y) \;, \\
E^{n,(i)}_j (t,x) \;&=\;  \frac 12\, 
A^n_j(t,x)\, V_j^{n,(i)} (t,x) \;, \\
F^{n,(i)}_j (t,x) \;&=\; -\, \frac{\epsilon_n}{2}\,
\frac {V_j^{n,(i)}(t,x)}{\chi(w_t \, (x+e_j))} \, 
\big\{ \, v^n \, (t,x) + v^n \, (t,x+e_j) \,\big\} \;, \\
G^{n,(i)}_j (t,x) \;&=\; -\, \frac{\epsilon_n}{2}\,
\frac {V_j^{n,(i)}(t,x)}{\chi(w_t \, (x))} \, 
\big\{ \, v^n \, (t,x) + v^n \, (t,x+e_j) \,\big\} \;,
\end{align*}
respectively. For $A\subset \bb T_n^d$ and $i=1,2$, let $J^{n,(i)}_j(t,x, A)$ be given
\begin{align*}
J^{n, (i)}_j(t,x, A) \; =\; 
& -\;  \Upsilon_{\{0,e_j\}} (A)\; 
V_j^{n,(i)} \, (t,x) \; \mf c_j(t,x,A\setminus \{0,e_j\}) \\
& +\; \Upsilon_{\{0\}} (A) \;
F^{n,(i)}_j \, (t,x) \; \mf c_j(t,x,A\setminus \{0\}) \\
& +\; \Upsilon_{\{e_j\}} (A) \;
G^{n,(i)}_j \, (t,x) \; \mf c_j(t,x,A\setminus \{e_j\}) \;,
\end{align*}
where, for two subsets $A$, $B$ of $\bb Z^d$,
\begin{equation}
\label{14}
\text{$\Upsilon_{B} (A) =1$ \; if $B\subset A$\,, \, 
and \, $\Upsilon_{B} (A)  =0$ \; otherwise}\;. 
\end{equation}
Here, the Fourier coefficients $\mf c_{j}(t,x,A)$,
$\mf g_{j,p} (t,x,A)$ are computed with respect to the product measure
$\nu^n_{w(t)}$.  Finally, let
\begin{equation*}
H^{n,(i)}_j(t, x, A) \; =\; E^{n,(i)}_j(t,x)\; \mf c_{j}(t,x,A)  
\;+\; \sum_{p=1}^{n_j} B^{n,(i)}_{j,p} (t,x) \; \mf g_{j,p}(t,x,A) \;+\;
J^{n,(i)}_j(t,x, A)\;,
\end{equation*}
\begin{align*}
H^n_j(t, x, A) \; =\; n^2\, H^{n,(1)}_j (t, x, A) \;+\;
a_n\, n\, H^{n,(2)}_j (t, x, A)\;.
\end{align*}

In the case where $w(t) = (1/2) + \epsilon_n v^n(t)$, Lemma
\ref{as4} and Lemma \ref{l01} become
\begin{equation}
\label{19}
\partial_t\log{\psi_t} \;=\; \epsilon_n\, 
\sum_{x\in\T_n^d} (\partial_t v^n) (t,x)\, \om_x\;. 
\end{equation}

\begin{lemma}
\label{l4}
Suppose that $w(t) = (1/2) + \epsilon_n v^n(t)$. Then,
\begin{equation*}
L_{w(t)}^{*} \, {\bf 1} \;=\; \sum_{j=1}^d
\sum_{x\in\T_n^d} K^n_j(t,x) \, \omega_x 
\; + \; \sum_{j=1}^d \sum_{A: |A|\ge 2}  \sum_{x\in\T_n^d} 
H^n_j(t,  x, A) \, \omega (A+x)\;, 
\end{equation*}
where 
\begin{equation*}
K^n_j(t,x) \;=\; n^2\Big\{ E_{\nu^n_{w(t)}} \big[\, j_{x-e_j,x} \, \big] \,-\,  
E_{\nu^n_{w(t)}} \big[\, j_{x,x+e_j} \,  \big] \Big\}
\, -\, a_n\, (\, \nabla^n_j \, I^n_j \,)\, (t, x-e_j)\;,
\end{equation*}
\begin{equation*}
\vphantom{\Big\{}
\text{$I^n_j(t,x)=  E_{\nu^n_{w(t)}}[\tau_x c_j] \, C^n_j(t,x)$, and}
\end{equation*}
$\om_x$ and $\om(B)$ are defined in Lemma \ref{l01}.
\end{lemma}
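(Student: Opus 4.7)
The plan is to specialize Lemma \ref{l01} to the case $\varrho = w(t) = (1/2) + \epsilon_n v^n(t)$ and then rewrite the resulting expression in terms of the rescaled gradient $\nabla^n_j = n\, D_j$. By the convention introduced just before Lemma \ref{l01}, the coefficients $A_j$, $B^{(i)}_{j,p}$, $E^{(i)}_j$, $F^{(i)}_j$, $G^{(i)}_j$, $I_j$, $H^{(i)}_j$ and the Fourier coefficients $\mf c_j(t,x,A)$, $\mf g_{j,p}(t,x,A)$ that appear in Lemma \ref{l01} are all obtained from the formulas of Section \ref{sec4} by substituting $\varrho(x) = w(t,x)$. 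The first step is therefore to compare these specialized objects with the $n$-decorated coefficients $A^n_j$, $B^{n,(i)}_{j,p}$, $E^{n,(i)}_j$, etc., introduced in this subsection.

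This comparison rests on the identity
\begin{equation*}
(D_j w_t)(x) \;=\; \epsilon_n \, (D_j v^n)(t,x) \;=\; \frac{\epsilon_n}{n}\, (\nabla^n_j v^n)(t,x) \;=\; U^{n,(1)}_j(t,x)\;,
\end{equation*}
together with $U^{(2)}_j(t,x) = -\, C^n_j(t,x) = U^{n,(2)}_j(t,x)$, both immediate from the definitions. From these, the polynomial building blocks $V_j^{(i)}$ underlying $E^{(i)}_j$, $F^{(i)}_j$, $G^{(i)}_j$, $B^{(i)}_{j,p}$ literally coincide with $V_j^{n,(i)}$, while the equalities $A_j = A^n_j$ and $I_j = I^n_j$ hold tautologically. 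Consequently $H_j(t,x,A) = H^n_j(t,x,A)$ term by term, which takes care of the sum over $|A|\ge 2$ in Lemma \ref{l01}.

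For the degree-one coefficient $K_j$, the symmetric contribution already appears in the form stated in Lemma \ref{l4}. The asymmetric piece requires only a change of variable: since $\nabla^n_j = n\, D_j$ by \eqref{13}--\eqref{24}, one has
\begin{equation*}
-\, a_n\, n\, (D_j I_j)(t,x-e_j) \;=\; -\, a_n\, (\nabla^n_j I^n_j)(t,x-e_j)\;,
\end{equation*}
so that the extra factor of $n$ produced by the conversion is absorbed exactly into the $a_n n$ prefactor inherited from the speed-up of the asymmetric generator. Combining the two identifications yields the claimed formula for $L^*_{w(t)}\mathbf{1}$. The argument is essentially a bookkeeping exercise: there is no substantive analytic obstacle beyond carefully tracking the scaling factors, all of the nontrivial structure having already been captured by Lemma \ref{l01}.
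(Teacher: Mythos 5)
Your proposal is correct and follows essentially the same route as the paper: Lemma \ref{l01} is specialized to $\varrho = w(t)$ and the coefficients are re-expressed using $\nabla^n_j = n\, D_j$, which is exactly the ``reduction'' the paper alludes to before stating the lemma. The only place where more than literal substitution is involved is the matching of $F^{(i)}_j$, $G^{(i)}_j$ with $F^{n,(i)}_j$, $G^{n,(i)}_j$, which rests on the identity $[D_j(\chi\circ w_t)](x) = -\,\epsilon_n\,(D_j w_t)(x)\,[\,v^n(t,x)+v^n(t,x+e_j)\,]$ (valid because $\alpha_0=1/2$, so $1-w_t(x)-w_t(x+e_j)=-\epsilon_n[v^n(t,x)+v^n(t,x+e_j)]$); this one-line computation should be made explicit rather than subsumed under ``literally coincide''.
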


The next result is a consequence of
Lemmata \ref{l1}, \ref{as4}, \ref{l4}.

\begin{corollary}
\label{l8}
Suppose that $w(t) = (1/2) + \epsilon_n v^n(t)$.  All terms of degree
$1$ of $L_{w(t)}^{*} \, {\bf 1} - \partial_t\log{\psi_t}$ vanish as
long as $v^n(t,x)$ is the solution of the semi-discrete equation
\begin{equation}
\label{17}
(\partial_t v^n) (t,x) \;=\; a_n\, \sum_{j=1}^d K^n_j(t,x)\;,
\quad t\ge 0 \;,\; x\in\bb T^d_n\; .
\end{equation}
\end{corollary}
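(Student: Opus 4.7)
The plan is to simply read off the degree-$1$ terms on both sides of the identity $L_{w(t)}^{*}\mathbf{1}-\partial_t\log\psi_t$ and match coefficients of the orthogonal basis elements $\omega_x$, exploiting the fact that the degree-$1$ part has already been isolated for us.

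First, I would invoke Lemma \ref{l4} and note that the entire contribution of $L_{w(t)}^{*}\mathbf{1}$ to degree $1$ (that is, the part involving subsets $A$ of size exactly $1$, which expands into $\omega_x$'s) is
\begin{equation*}
\sum_{j=1}^d\sum_{x\in\T_n^d} K^n_j(t,x)\,\omega_x,
\end{equation*}
since the second sum in Lemma \ref{l4} ranges over $|A|\ge 2$ and therefore contributes only to degrees $\ge 2$. Next, by Lemma \ref{as4} specialized to $w(t)=(1/2)+\epsilon_n v^n(t)$ -- this is exactly the identity \eqref{19} -- we have
\begin{equation*}
\partial_t\log\psi_t \;=\; \epsilon_n\sum_{x\in\T_n^d}(\partial_tv^n)(t,x)\,\omega_x,
\end{equation*}
which is purely of degree $1$. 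Subtracting, the degree-$1$ part of $L_{w(t)}^{*}\mathbf{1}-\partial_t\log\psi_t$ equals
\begin{equation*}
\sum_{x\in\T_n^d}\Big\{\sum_{j=1}^d K^n_j(t,x)\;-\;\epsilon_n(\partial_tv^n)(t,x)\Big\}\omega_x.
\end{equation*}

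Since the collection $\{\omega(B):B\subset\T_n^d\}$ forms an orthogonal family in $L^2(\nu_{w(t)}^n)$ (as noted after \eqref{03} with $\varrho=w(t)$), the displayed expression is identically zero if and only if each coefficient vanishes, i.e.,
\begin{equation*}
\epsilon_n\,(\partial_tv^n)(t,x) \;=\; \sum_{j=1}^d K^n_j(t,x)\quad\text{for every }x\in\T_n^d.
\end{equation*}
Multiplying by $a_n=1/\epsilon_n$ gives precisely the semi-discrete equation \eqref{17}, completing the proof. There is no real obstacle here: the corollary is a direct bookkeeping consequence of Lemmata \ref{l1}, \ref{as4} and \ref{l4}; all of the substantive work went into producing the explicit decomposition in Lemma \ref{l4} (in particular, checking that all asymmetric contributions of degree $1$ assemble into the discrete divergence $-a_n(\nabla^n_j I^n_j)(t,x-e_j)$ inside $K^n_j$) and into writing $\partial_t\log\psi_t$ as a pure degree-$1$ functional in \eqref{19}.
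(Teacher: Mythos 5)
Your proof is correct and follows exactly the route the paper intends: the paper states Corollary \ref{l8} as an immediate consequence of Lemmata \ref{l1}, \ref{as4} and \ref{l4} without further argument, and your matching of the $\omega_x$-coefficients from Lemma \ref{l4} and \eqref{19}, followed by multiplication by $a_n=1/\epsilon_n$, is precisely that bookkeeping.
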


\begin{remark}
\label{rm5}
Note that the computation of $L_{w(t)}^{*} \, {\bf 1}$ for an
arbitrary profile $w(t): \bb T^d_n \to (0,1)$ reveals the
semi-discrete partial differential equation which describes the
macroscopic evolution of the density.

At this point, there are two possible choices. In Lemma \ref{l4}, we
may consider as reference state the product measure $\nu^n_{w(t)}$
whose density profile $w(t)$ is given by
$(1/2) + \epsilon_n\, v^n(t)$, where $v^n(t)$ is the solution of the
semi-discrete equation \eqref{17}, or the one given by
$(1/2) + \epsilon_n\, v(t)$, where $v(t)$ is the solution of the
semi-linear equation \eqref{beq}.

With the first choice, the terms of degree one in the expression
$L_{w(t)}^{*} \, {\bf 1} - \partial_t\log{\psi_t}$ vanish. To estimate
the terms of order $2$ or higher, uniform bounds of the discrete
derivatives of the solutions of the semi-discrete equation \eqref{17}
are needed.

With the second choice, the terms of degree one appear multiplied by a
small constant, but do not vanish and need to be estimated. In
contrast, the terms of degree $2$ or higher can be estimated with
bounds on the derivatives of the solutions of the semi-linear equation
\eqref{beq} provided by \cite{lsu1968}.
\end{remark}

We followed here the approach adopted by the previous authors and
sticked to the second choice.

\begin{remark}
\label{rm7}
The assumption that $n^2 \, \epsilon^4_n \le C_0\, g_d(n)$ for some
finite constant $C_0$ is needed to estimate the linear terms of the
time-derivative of the relative entropy [the linear terms of
$L^*_{w(t)} \bs 1 - \partial_t \log \psi_t$, computed in Lemmata
\ref{l01} and \ref{as4}]. Actually, equation \eqref{beq} is a
continuous version of the semi-discrete equation obtained by
considering the linear terms (in $\eta$) of the identity
\begin{equation}
\label{35}
L^*_{w(t)} \bs 1 \;-\;  \partial_t \log \psi_t \;=\; 0\;.
\end{equation}

One may try to weaken or remove the hypothesis $n^2 \,
\epsilon^4_n \le C_0\, g_d(n)$ by replacing equation \eqref{beq} by
the one obtained restricting \eqref{35} to the linear terms. In this
case, however, estimating the quadratic terms of \eqref{35} might be
more demanding. One may also try to weaken this hypothesis by adding
to equation \eqref{beq} terms of order $\epsilon^k_n$, $k\ge 2$. 
\end{remark}

\begin{remark}
\label{rm8}
In the case where $c_j(\eta)=1$, $\mb m_j=1$ for all $j$, the
semi-discrete equation \eqref{17} becomes
\begin{equation*}
(\partial_t v) (t,x) \;=\; \Big( 1 + \frac{a_n}{n}\Big)\, (\Delta_n v) (t,x) 
\;+\;  n \sum_{j=1}^d  \,  \big\{ 
v(t,x) \, v(t,x+e_j) \,-\, v(t,x-e_j) \, v(t,x) \big\}\;,  
\end{equation*}
where $\Delta_n \varrho$ stands for the discrete Laplacian:
\begin{equation*}
(\Delta_n \varrho) (x) \;=\; n^2 \sum_{j=1}^d
\big \{ \, \varrho (x+e_j) \,+\, \varrho (x-e_j) \,-\,  2 \,\varrho
(x)  \, \big\}\;.
\end{equation*}
\end{remark}

\section{Proof of Theorems  \ref{main1} and Corollary  \ref{main2}}
\label{sec5}

Assume, without loss of generality, that in hypothesis \eqref{34},
$\alpha_0 =1/2$. Assume, furthermore, that
$v: \bb R_+ \times \bb T^d \to \bb R$ is the solution of the
semi-linear equation \eqref{beq} and that
$w^n(t, x) = (1/2) + \epsilon_n \, v(t,x/n)$.  We refer
constantly to Section \ref{sec3} for properties of the solutions of
the viscous Burgers equation \eqref{beq}.

By Lemma \ref{l9}, for all $T>0$, there exists $\delta>0$ such that
\begin{equation}
\label{18}
\delta \;\le\; w^n(t, x) \;\le\; 1\,-\, \delta\;,
\end{equation}
for all $0\le t\le T$, $x\in\bb T^d_n$ and $n$ sufficiently
large. \smallskip

Let $L^n : \bb R_+ \times \bb T^d_n \to \bb R$ be given by
\begin{equation}
\label{20}
L^n (t,x) \;=\; \sum_{j=1}^d K^n_j (t,x) \,-\, \epsilon_n\,  (\partial_t
v)(t,x/n)\; . 
\end{equation}

\begin{lemma}
\label{l3}
Fix a density profile $v_0$ in $C^{3 + \beta}(\bb T^d)$ for some
$0<\beta<1$.  For every $T>0$, there exists a finite constant $C_0$,
depending only on $v_0$ and $T$, such that for all $0\le t\le T$,
$\gamma>0$,
\begin{equation*}
\int \sum_{x\in\T_n^d} L^n (t,x) \, \om_x \, d\mu\, S_t^n \;\le\; 
\frac 1\gamma\, H_n(\mu\, S_t^n |\nu_{w^n(t)}^n) \;+\; 
C_0 \, \gamma\, n^{d-2} \, (1 + n^2 \, \epsilon^4_n)\, 
e^{C_0 \, \gamma \, \kappa_n}\;,
\end{equation*}
where $\kappa_n = \epsilon^2_n + (1/n)$.
\end{lemma}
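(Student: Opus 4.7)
The plan is to apply the classical entropy inequality
\begin{equation*}
\int F \, d\mu \;\le\; \frac{1}{\gamma}\, H_n(\mu\,|\,\nu) \;+\; \frac{1}{\gamma}\, \log \int e^{\gamma F}\, d\nu
\end{equation*}
with $\mu = \mu\, S_t^n$, $\nu = \nu^n_{w^n(t)}$, and $F(\eta) = \sum_{x\in\T_n^d} L^n(t,x)\, \om_x$. The first summand yields exactly $(1/\gamma)\, H_n(\mu\, S_t^n \,|\, \nu^n_{w^n(t)})$, which matches the entropy term in the claimed bound, so everything reduces to an estimate of $\log \int e^{\gamma F}\, d\nu^n_{w^n(t)}$.

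Under $\nu^n_{w^n(t)}$ the variables $\{\om_x : x\in\T_n^d\}$ are independent, centered, and, thanks to the uniform bound \eqref{18} on $w^n$, satisfy $|\om_x| \le C_\delta$ and $E_{\nu^n_{w^n(t)}}[\om_x^2] = 1/\chi(w^n(t,x)) \le C_\delta$. Using the elementary exponential-moment bound $E[e^{sZ}] \le \exp\bigl((s^2/2)\, E[Z^2]\, e^{|s|\|Z\|_\infty}\bigr)$ for bounded centered $Z$, together with independence, one obtains
\begin{equation*}
\frac{1}{\gamma}\, \log \int e^{\gamma F}\, d\nu^n_{w^n(t)} \;\le\; C_\delta\, \frac{\gamma}{2}\, \exp\bigl(\, C_\delta\, \gamma\, \|L^n(t,\cdot)\|_\infty\,\bigr)\, \sum_{x\in\T_n^d} L^n(t,x)^2\;.
\end{equation*}
Thus the whole proof reduces to the single pointwise estimate $\|L^n(t,\cdot)\|_\infty \le C_0\, \kappa_n$ on $0\le t\le T$: together with the elementary inequality $\kappa_n^2 \le 2(\epsilon_n^4 + 1/n^2)$, this gives $\sum_x L^n(t,x)^2 \le C_0^2\, n^d\, \kappa_n^2 \le 2 C_0^2\, n^{d-2}(1 + n^2 \epsilon_n^4)$, which combines with the previous display into the stated bound.

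The crux is therefore this pointwise Taylor-type estimate. By definition $L^n$ is the mismatch between the semi-discrete drift $\sum_j K^n_j$ from Lemma \ref{l4} and $\epsilon_n(\partial_t v)(t,x/n)$, with $v$ the smooth solution of \eqref{beq}. Substituting $w^n(t,x) = 1/2 + \epsilon_n v(t,x/n)$ and expanding, two contributions have to be handled. For the diffusive part $n^2\{E[j_{x-e_j,x}] - E[j_{x,x+e_j}]\}$, the gradient condition \eqref{02} together with $\sum_y m_{j,p}(y) = 0$ rewrites the expression as a discrete divergence of $\tilde g_{j,p}(w^n(t,\cdot))$; expanding $\tilde g_{j,p}$ around $1/2$ produces the term $\epsilon_n \nabla \cdot [D(1/2)\nabla v]$ at leading order (off-diagonal entries vanishing by \eqref{32}), with a finite-difference error of order $\epsilon_n/n^2$ and a Taylor remainder of order $\epsilon_n^2$. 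For the asymmetric part $-a_n \nabla^n_j I^n_j$, the decisive simplification is that hypothesis \eqref{34} combined with \eqref{33} forces $\tilde c_j'(1/2) = 0$; this kills the otherwise $O(1)$ linear-in-$v$ contribution coming from $a_n \cdot \epsilon_n \tilde c_j'(1/2)\, \partial_{\theta_j} v$. The surviving $\epsilon_n$-order quadratic term reproduces exactly the nonlinearity $-(\epsilon_n/2)\partial_{\theta_j}[v^2 \sigma_{jj}''(1/2) m_j]$ of \eqref{beq}, while an unavoidable cross-term of order $1/n$, arising from the mismatch $v(t,x/n) - v(t,(x+e_j)/n)$ inside $w^n(t,x)(1-w^n(t,x+e_j))$, remains and accounts for the $1/n$ part of $\kappa_n$. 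Uniform $C^3$-bounds on $v$ on $[0,T]$, provided by Section \ref{sec3}, make these expansions rigorous.

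The hard part will be precisely this last bookkeeping: pairing each $\epsilon_n$-order discrete term with its continuous counterpart in \eqref{beq}, and checking that, after using $\sigma'(\alpha_0) = 0$, no $O(1)$ or $O(\epsilon_n)$ residual survives outside of the PDE. Once these cancellations are organised, the entropy inequality, the exponential-moment bound via independence, and the algebraic reduction $n^d\kappa_n^2 \le 2 n^{d-2}(1 + n^2\epsilon_n^4)$ are all routine.
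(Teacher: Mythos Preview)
Your approach is essentially identical to the paper's: both apply the entropy inequality, exploit the product structure of $\nu^n_{w^n(t)}$ together with the elementary bound $e^x \le 1 + x + (x^2/2)e^{|x|}$ and $\log(1+a)\le a$ to control the exponential moment, and then reduce everything to the pointwise estimate $|L^n(t,x)| \le C_0(\epsilon_n^2 + 1/n)$. The paper isolates this last estimate as a separate lemma (Lemma~\ref{l5}, proved via Lemmata~\ref{l13} and~\ref{l14}), whereas you sketch its proof inline; the Taylor-expansion bookkeeping you outline for the diffusive and asymmetric parts matches the content of those lemmata.
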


\begin{proof}
By the entropy inequality, the left-hand side is bounded by
\begin{equation*}
\frac 1{\gamma} \, H_n(\mu\, S_t^n |\nu_{w^n(t)}^n) \;+\;
\frac 1{\gamma} \, \log \int \exp\Big\{\gamma 
\sum_{x\in\T_n^d} L^n(t,x) \, \om_x \Big\} \, d \nu_{w^n(t)}^n\;,
\end{equation*}
for all $\gamma>0$. As $\nu_{w^n(t)}^n$ is a product measure, we may
move the sum outside the logarithm. Since $e^x \le 1 + x + (1/2) x^2
e^{|x|}$, $\log (1+a) \le a$, $a>0$, and since $\om_x$ has mean zero
with respect to $\nu_{w^n(t)}^n$, the second term of the previous
formula is bounded above by
\begin{equation*}
\frac {\gamma}{2} \, \sum_{x\in\T_n^d} \frac{L^n(t,x)^2}
{\chi(w^n(t,x))}  \, \exp\Big\{\gamma 
\, |L^n(t,x)| \, / \chi(w^n(t,x)) \Big\} \;,
\end{equation*}
because $E_{\nu_{w^n(t)}^n}[\omega^2_x] = 1/\chi(w^n(t,x))$. By Lemma
\ref{l5} and by \eqref{18}, the previous expression is bounded by 
\begin{equation*}
C_0 \, \gamma\,  n^{d-2} \,  (1 + n^2 \, \epsilon^4_n) \,
e^{C_0 \, \gamma \, [\epsilon^2_n + (1/n)]}\;,
\end{equation*}
for some finite constant $C_0$ which depends only on $v_0$ and $T$. This
completes the proof of the lemma.
\end{proof}

We turn to the quadratic or higher order term $H^n_j(t,x,A)$.  The
estimation is based on the following bound due to Jara and Menezes
\cite[Lemma 3.1]{jm2}.

\begin{proposition}
\label{l6} 
Fix a finite subset $A$ of $\bb Z^d$ with at least two elements.  For
every $\delta>0$, $a>0$ and $C_1<\infty$, there exists a finite
constant $C_0$, depending only on $\delta$, $A$, $C_1$ and $a$ such
that the following holds. For all $n\ge 1$, probability measures $\mu$
on $\Omega_n$, functions $u$, $J: \bb T^d_n \to \bb R$ such that
$\delta \le u(x) \le 1-\delta$ for all $x\in \bb T^d_n$, and
\begin{equation*}
\max_{x\in \bb T^d_n} \max_{1\le j\le d} 
\big|\, (\nabla^n_j u)(x)\, \big| \;\le\; C_1\;, \quad 
\max_{x\in \bb T^d_n} |J(x)| \;\le\; C_1\;,
\end{equation*}
we have that
\begin{equation*}
\int\sum_{x\in\T_n^d} J(x)\, \omega (A+x) \,  d\mu \;\le\;
a\, n^2\, I(g \,;\, \nu^n_{u(\cdot)}) \;+\; 
C_0 \big \{ \, H_n(\mu \,|\, \nu^n_{u(\cdot)} ) \,+\, 
n^{d-2}g_d(n) \big\}\;,
\end{equation*}
where 
\begin{equation*}
\omega_x \; =\; \frac{\eta_x - u(x)}{\chi(u(x))}\;, \quad \omega(B) \;=\;
\prod_{x\in B} \omega_x\;, \quad B\subset \bb Z^d\;, 
\end{equation*}
and $g = d\mu/d\nu^n_{u(\cdot)}$.
\end{proposition}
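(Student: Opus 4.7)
The plan is to combine a moving-particle / coarse-graining argument with the entropy inequality, in the spirit of two-blocks replacement lemmas. The bound has the characteristic mixed form (Dirichlet form plus entropy plus deterministic error) produced by spectral-gap based replacement estimates, so the strategy is to decompose each $\omega(A+x)$ into a local fluctuation controlled by $I(g;\nu^n_{u(\cdot)})$ and a coarse-grained average controlled by $H(\mu\,|\,\nu^n_{u(\cdot)})$.

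Fix a mesoscopic length $\ell=\ell(n)$, to be optimized at the end, and write
\begin{equation*}
\omega(A+x)\;=\;\bigl[\omega(A+x)-\overline{\omega}_\ell(x)\bigr]\;+\;\overline{\omega}_\ell(x),
\end{equation*}
where $\overline{\omega}_\ell(x)$ averages $\omega(A+y)$ over $y$ in a box of side $\ell$ about $x$. For the fluctuation piece, one connects $\omega(A+x)$ to $\omega(A+y)$, $|x-y|\le\ell$, by a sequence of elementary bond swaps $\sigma^{z,z+e_j}$. Each such swap contributes one term to the Dirichlet form $I(g;\nu^n_{u(\cdot)})$, and by applying the Cauchy--Schwarz inequality between the swap and the corresponding increment of $\sqrt{g}$ one obtains, for arbitrary $a>0$,
\begin{equation*}
\int\sum_x J(x)\bigl[\omega(A+x)-\overline{\omega}_\ell(x)\bigr]\,d\mu\;\le\; a\,n^2\,I(g;\nu^n_{u(\cdot)})\;+\;\tfrac{C}{a}\,\ell^2 n^{-2}\,\|J\|_\infty^2\,n^d,
\end{equation*}
with a constant $C=C(\delta,A,C_1)$ absorbing the bounds on $u$ and $\nabla^n u$. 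The smoothness of $u$ and its bounds away from $0$ and $1$ are used here to control the variation of the weights $\chi(u(\cdot))^{-1}$ along the path.

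For the averaged piece, apply the entropy inequality: for every $\gamma>0$,
\begin{equation*}
\int\sum_x J(x)\,\overline{\omega}_\ell(x)\,d\mu\;\le\;\gamma^{-1}H_n(\mu\,|\,\nu^n_{u(\cdot)})\;+\;\gamma^{-1}\log\!\int\exp\Bigl\{\gamma\sum_x J(x)\,\overline{\omega}_\ell(x)\Bigr\}\,d\nu^n_{u(\cdot)}.
\end{equation*}
The key point is that, under the product measure $\nu^n_{u(\cdot)}$, $E[\omega(A+x)\omega(A+y)]=0$ unless $A+x=A+y$; hence $\overline{\omega}_\ell(x)$ is an average of $\sim\ell^d$ orthogonal terms and has variance of order $\ell^{-d\,|A|}$. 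Using $e^u\le 1+u+\tfrac{1}{2}u^2 e^{|u|}$ together with $|\overline{\omega}_\ell(x)|\le C(\delta)\ell^{-d\,|A|/2}$ (after a truncation), the log-Laplace transform is essentially Gaussian of order $\gamma^2\,n^d\,\ell^{-d(|A|-1)}$. Combining the two pieces and optimizing: choose $\ell$ so that the two error terms balance, and choose $\gamma$ so that the $\gamma^{-1}H_n$ term is of order $H_n$; the remaining deterministic error is controlled by $\sum_{0\ne z\in\T^d_n}|z|^{2-d}$, which is exactly $g_d(n)$ times $n^{d-2}$ up to constants in dimensions $1,2,\ge 3$.

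The main obstacle is the moving-particle step applied to a degree-$|A|$ functional $\omega(A+x)$ rather than to a degree-one variable: one cannot simply transport a single occupation variable but must move the whole $|A|$-tuple $A+x$ coherently, so the elementary one-bond estimate has to be iterated $|A|$ times while keeping track of the non-constancy of the reference density $u$. This is where the constant $C_0$ inherits its dependence on $A$ and on the gradient bound $C_1$, and it is also where the assumption $|A|\ge 2$ is essential: when $|A|=1$ the coarse-grained variance $\ell^{-d(|A|-1)}$ degenerates to $1$ and no improvement over the trivial estimate is gained.
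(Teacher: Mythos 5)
This proposition is not proved in the paper at all: it is quoted verbatim from Jara and Menezes \cite[Lemma 3.1]{jm2}, so there is no in-paper argument to match your sketch against. Judged on its own merits, your sketch has the right general flavour (local fluctuation controlled by the Dirichlet form, coarse-grained part controlled by the entropy inequality), but as written it contains a genuine quantitative gap: a \emph{single} mesoscopic scale $\ell$ cannot produce the error $n^{d-2}g_d(n)$. With your two bounds the total error is of order $\min_\ell\{\ell^2 n^{d-2}+n^d\ell^{-d(|A|-1)}\}$; for $|A|=2$ the optimum is $\ell\sim n^{2/(d+2)}$, giving $n^{d-2}\,n^{4/(d+2)}$, which exceeds $n^{d-2}g_d(n)$ by a power of $n$ in every dimension (e.g.\ $n^{1/3}$ versus $1$ when $d=1$, $n^{9/5}$ versus $n$ when $d=3$). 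The sharp constant $g_d(n)=\sum_{2^j\le n}(2^j)^{2-d}$ is the signature of a multiscale renormalization: one must double the block size dyadically, $\ell_j=2^j$, paying at each step a Dirichlet cost plus a quadratic cost of order $n^{d-2}\ell_j^{2-d}$, and sum over scales. Your closing remark about $\sum_z|z|^{2-d}$ points at this, but it is inconsistent with the one-scale decomposition you actually set up, and the one-scale version is where the proof would fail.

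Two further points need repair. First, the object you call $\overline{\omega}_\ell(x)$ (an average of the \emph{products} $\omega(A+y)$ over $y$ in a box) has variance of order $\ell^{-d}$, not $\ell^{-d|A|}$; the variance gain $\ell^{-d|A|}$ requires coarse-graining each factor separately, i.e.\ replacing $\prod_{z\in A}\omega_{x+z}$ by a product of block averages over disjoint blocks, which is a different (and the correct) replacement but changes the moving-particle step accordingly. Second, the bound $|\overline{\omega}_\ell(x)|\le C\ell^{-d|A|/2}$ holds only on a high-probability event, and the truncation error in the exponential estimate has to be controlled by a separate concentration argument; merely saying ``after a truncation'' leaves the exponential-moment step incomplete.
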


We show in the next paragraphs that the hypotheses of this proposition
are fulfilled for $u(x) = w^n(t,x)$, $J(x) = H^n_j(t,x,A)$. We first
prove the bounds for $u$ and then the ones for $J$.

By definition, $|(\nabla^n_j w^n)(t,x)| \le \epsilon_n\, \sup_{\theta\in
  \bb T^d} | (\partial_{\theta_j} v)(t,\theta)|$. Hence, by Lemma
\ref{l9}, for every $T>0$, there exists a finite constant $C_1=C_1(T,
v_0)$ such that for all $n\ge 1$,
\begin{equation}
\label{21}
\sup_{0\le t\le T} \max_{x\in \bb T^d_n} \max_{1\le j\le d} 
\big|\, (\nabla^n_j w^n)(t,x)\, \big| \;\le\; C_1\, \epsilon_n\;. 
\end{equation}
On the other hand, we have seen in \eqref{18} that for all $T>0$ there
exists $\delta>0$ such that $\delta \le w^n(t,x) \le 1-\delta$ for all
$x\in \bb T^d_n$, $0\le t\le T$ and $n$ sufficiently large.

The next lemma provides an estimate for the term $J(x) = H^n_j(t,x,A)$.

\begin{lemma}
\label{l11}
For each $T>0$, there exists a finite constant $C_0 = C_0(T,v_0)$
such that for all $n\ge 1$, 
\begin{equation*}
\sup_{0\le t\le T} \max_{x\in \bb T^d_n} \sup_{A\subset \bb Z^d}
\max_{1\le j\le d} 
\, \big|\, H^n_j(t,x,A) \, \big| \;\le\; C_0 \;,
\end{equation*}
where the supremum is carried over all finite subsets $A$ of $\bb
Z^d$. 
\end{lemma}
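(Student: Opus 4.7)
The plan is to prove the bound by tracking the order of each elementary building block in $n$ and $\e_n$, and then showing that the combinations $n^2\, H^{n,(1)}_j$ and $a_n\, n\, H^{n,(2)}_j$ are both $O(1)$.

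First, I would record preliminary bounds from the regularity of $v$. By Lemma~\ref{l9} (which gives uniform control on $v$ and its first derivative on $[0,T]\times\T^d$), and by the bound \eqref{18} giving $\de\le w^n(t,x)\le 1-\de$ for large $n$, I obtain
\begin{equation*}
|A^n_j(t,x)|\le C\,,\quad |C^n_j(t,x)|\le C\,,\quad |U^{n,(2)}_j(t,x)|\le C\,,\quad |U^{n,(1)}_j(t,x)|\le C\,\e_n/n\,,
\end{equation*}
with $C=C(T,v_0,\de)$; the last bound uses $(\nabla^n_j v^n)(t,x)=n[v(t,(x+e_j)/n)-v(t,x/n)]$ and the mean value theorem. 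Consequently $|V^{n,(1)}_j|\le C\,\e_n^2/n^2$ and $|V^{n,(2)}_j|\le C\,\e_n/n$, so that $|E^{n,(1)}_j|\le C\,\e_n^2/n^2$, $|E^{n,(2)}_j|\le C\,\e_n/n$, and $|F^{n,(i)}_j|,|G^{n,(i)}_j|\le C\,\e_n\,|V^{n,(i)}_j|$. The Fourier coefficients $\mf c_j(t,x,A)$, $\mf g_{j,p}(t,x,A)$ are uniformly bounded by $\|c_j\|_\infty$ and $\|g_{j,p}\|_\infty$ (these are bounded cylinder functions and $|\xi_{w(t)}(A+x)|\le 1$).

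The one place where I must exploit cancellation is the term $B^{n,(i)}_{j,p}$. Using $\sum_y m_{j,p}(y)=0$, I rewrite
\begin{equation*}
B^{n,(i)}_{j,p}(t,x)\;=\;\tfrac12\sum_{y} m_{j,p}(y)\,\bigl[\,A^n_j(t,x-y)\,U^{n,(i)}_j(t,x-y)-A^n_j(t,x)\,U^{n,(i)}_j(t,x)\,\bigr]\,,
\end{equation*}
and, since the support of $m_{j,p}$ lies in a fixed box $\Lambda_{\ell_0}$, the difference is controlled by the discrete derivative of $A^n_j\,U^{n,(i)}_j$, which carries an extra factor $1/n$ because $w^n$ varies only on the macroscopic scale. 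This gives $|B^{n,(1)}_{j,p}|\le C\,\e_n/n^2$ and $|B^{n,(2)}_{j,p}|\le C\,\e_n/n$. (The bound on $B^{n,(2)}_{j,p}$ uses that $\partial_{\theta_k} w^n(t,x)=\e_n\,\partial_{\theta_k} v(t,x/n)$, so differences of $A^n_j C^n_j$ over neighbouring lattice sites are of order $\e_n/n$.)

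Assembling these estimates and using the trivial bounds on the Fourier coefficients, each of the terms $E^{n,(i)}_j\,\mf c_j$, $B^{n,(i)}_{j,p}\,\mf g_{j,p}$, $V^{n,(i)}_j\,\mf c_j$, $F^{n,(i)}_j\,\mf c_j$, $G^{n,(i)}_j\,\mf c_j$ comprising $H^{n,(i)}_j(t,x,A)$ is bounded by $C\,\e_n^2/n^2$ when $i=1$ and by $C\,\e_n/n$ when $i=2$. Therefore
\begin{equation*}
n^2\,|H^{n,(1)}_j(t,x,A)|\;\le\; C\,\e_n^2\;\le\; C\,,\qquad a_n\,n\,|H^{n,(2)}_j(t,x,A)|\;\le\; C\,a_n\e_n\;=\; C\,,
\end{equation*}
where in the last equality I used $a_n\e_n=1$. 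Summing the two contributions yields $|H^n_j(t,x,A)|\le C_0(T,v_0)$, uniformly in $x$, $A$, $j$ and $n$. The main obstacle in carrying this out is the careful accounting in the $B^{n,(i)}_{j,p}$ terms, where without exploiting $\sum_y m_{j,p}(y)=0$ one would fall short by a factor $n$ in the asymmetric contribution; all other terms are handled by routine bookkeeping once the elementary bounds are in place.
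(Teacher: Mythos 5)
Your proof is correct and follows essentially the same route as the paper: order-by-order bookkeeping of $A^n_j$, $C^n_j$, $U^{n,(i)}_j$, $V^{n,(i)}_j$, $B^{n,(i)}_{j,p}$, $E^{n,(i)}_j$, $F^{n,(i)}_j$, $G^{n,(i)}_j$ and the uniformly bounded Fourier coefficients, with the mean-zero property $\sum_y m_{j,p}(y)=0$ supplying the extra factor $1/n$ in the $B$-terms and $a_n\epsilon_n=1$ closing the asymmetric contribution. The only (harmless) slip is in your summary sentence: by your own estimate $B^{n,(1)}_{j,p}\,\mf g_{j,p}$ is only $O(\epsilon_n/n^2)$, not $O(\epsilon_n^2/n^2)$, so $n^2|H^{n,(1)}_j|\le C\epsilon_n$ rather than $C\epsilon_n^2$ — still $O(1)$, so the conclusion stands.
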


\begin{proof}
The proof is long, elementary and tedious. It follows from Lemma
\ref{l9} and from the definitions \eqref{09b} of the terms $A^n_j$,
$C^n_j$ that for each $T>0$, there exists a finite constant $C_0 =
C_0(T,v_0)$ such that for all $n\ge 1$,
\begin{equation*}
\sup_{0\le t\le T} \max_{x\in \bb T^d_n} \max_{1\le j\le d} 
\big|\, A^n_j(t,x) \, \big| \;\le\; C_0\;, \quad
\sup_{0\le t\le T} \max_{x\in \bb T^d_n} \max_{1\le j\le d} 
\big|\, C^n_j(t,x) \, \big| \;\le\; C_0\; . 
\end{equation*}
Furthermore, as $v(t,x)$ remains bounded in bounded time-intervals,
for each $T>0$, there exists a finite constant $C_0 = C_0(T,v_0)$
such that for all $n\ge 1$, 
\begin{equation*}
\sup_{0\le t\le T} \max_{x\in \bb T^d_n} \max_{|y|\le \ell_0}
\max_{1\le j\le d} 
n\, \big|\, A^n_j(t,x-y) \,-\,  A^n_j(t,x) \, \big| \;\le\; C_0\, \epsilon_n \;,
\end{equation*}
where $\ell_0$, introduced just after \eqref{02}, represents the size
of the support of the measures $m_{j,p}$.

Similar bounds hold for the functions $U^{n,(i)}_{j}$.  For each
$T>0$, there exists a finite constant $C_0 = C_0(T,v_0)$ such that for
all $n\ge 1$, $i=1$, $2$,
\begin{gather*}
\sup_{0\le t\le T} \max_{x\in \bb T^d_n} \max_{1\le j\le d} 
\max_{1\le p\le n_j} 
n^{2-i}\, \big|\, U^{n,(i)}_{j} (t,x) \, \big| \;\le\; 
C_0\, \epsilon_n^{2-i} \;, \\
\sup_{0\le t\le T} \max_{x\in \bb T^d_n} 
\max_{|y|\le \ell_0} \max_{1\le j\le d} \max_{1\le p\le n_j} n^{3-i}\,
\big|\, U^{n,(i)}_{j} (t,x-y) - U^{n,(i)}_{j} (t,x)\, \big| 
\;\le\; C_0\, \epsilon_n^{3-i} \; . 
\end{gather*}

It follows from the estimates on $A^n_j(t,x)$ and $U^{n,(i)}_{j}
(t,x)$ that for each $T>0$, there exists a finite constant $C_0 =
C_0(T,v_0)$ such that for all $n\ge 1$, $i=1$, $2$, 
\begin{equation*}
\sup_{0\le t\le T} \max_{x\in \bb T^d_n}  \max_{1\le j\le d} 
\max_{1\le p\le n_j} 
n^{3-i}\, \big|\, B^{n,(i)}_{j,p} (t,x) \, \big| \;\le\; 
C_0\, \epsilon_n \;.
\end{equation*}

Similarly, for each $T>0$, there exists a finite constant $C_0 =
C_0(T,v_0)$ such that for each $i=1$, $2$ and all $n\ge 1$,
\begin{align*}
& \sup_{0\le t\le T} \max_{x\in \bb T^d_n} \max_{1\le j\le d} 
n^{3-i} \, \big|\, E^{n,(i)}_j(t,x) \, \big| \;\le\; C_0\,
\epsilon_n^{3-i} \;, \\
&\quad
\sup_{0\le t\le T} \max_{x\in \bb T^d_n} \max_{1\le j\le d} 
n^{3-i} \, \big|\, F^{n,(i)}_j(t,x) \, \big| \;\le\; C_0\,
\epsilon_n^{4-i}\; , \\
&\qquad
\sup_{0\le t\le T} \max_{x\in \bb T^d_n} \max_{1\le j\le d} 
n^{3-i} \, \big|\, G^{n,(i)}_j(t,x) \, \big| \;\le\; C_0\,
\epsilon_n^{4-i}\; .
\end{align*}

Let $f$ be a cylinder function. Denote by $\mf f(t,x, A)$ the Fourier
coefficients of $f$ with respect to the measure $\nu_{w^n(t)}^n$, $w^n(t) =
(1/2) + \epsilon_n v(t)$.  It is clear, from the definition
\eqref{04}, that for all $n\ge 1$, $t\ge 0$, $x\in \bb T^d_n$,
$A\subset \bb Z^d$, 
\begin{equation}
\label{22}
\big|\, \mf f(t,x,A) \, \big| \;\le\; \Vert
f \Vert_\infty\;:=\; \sup_\eta \big|\, f(\eta) \, \big| \;.
\end{equation}

It follows from the previous estimate on the Fourier coefficients of
cylinder functions and from the bounds on $F^{n,(q)}_j$, $G^{n,(q)}_j$
that for each $T>0$, there exists a finite constant $C_0 = C_0(T,v_0)$
such that for each $i=1$, $2$, and all $n\ge 1$,
\begin{equation*}
\sup_{0\le t\le T} \max_{x\in \bb T^d_n} \sup_{A\subset \bb Z^d}
\max_{1\le j\le d} 
n^{3-i} \, \big|\, J^{n,(i)}_j(t,x,A) \, \big| \;\le\; C_0\,
\epsilon_n^{3-i} \;,
\end{equation*}
where the supremum is carried over all finite subsets $A$ of $\bb
Z^d$.

To complete the proof of the lemma, it remains to put together all
previous estimates.
\end{proof}

\begin{proof}
[Proof of Theorem \ref{main1}]
Let $\{\mu^n:n\ge1\}$ be a sequence of probability measures on $\Om_n$
satisfying the assumptions of the theorem.  Let $\mu_t^n=\mu^nS_t^n$
and $H_n(t)=H_n(\mu^n_t|\nu_t^n)$.

Lemma \ref{l1}, equation \eqref{19} and Lemma \ref{l4} provide a
formula for the derivative of $H_n(t)$.  Fix $T>0$. By \eqref{18},
there exists $\delta>0$ such that $\delta \le w^n(t,x) \le 1 - \delta$
for all $x\in \bb T^d_n$, $0\le t\le T$.  By \eqref{21},
\begin{equation*}
\kappa_T \,:=\; \sup_{0\le t\le T} \max_{x\in \bb T^d_n} \max_{1\le j\le d}
\big|\, (\nabla^n_j w^n)(t,x)\, \big| \;<\; \infty\;,
\end{equation*}
and by Lemma \ref{l11},
\begin{equation*}
H_T \,:=\; \sup_{0\le t\le T} \sup_{n\ge 0} \max_{A\subset \bb Z^d}
\max_{x\in \bb T^d_n} \max_{1\le j\le d}
\big|\, H^n_j (t,x,A)\, \big| \;<\; \infty\;.
\end{equation*}
Therefore, the hypotheses of Proposition \ref{l6} are in force for
$u(x) = w(t,x)$, $J(x) = H^n_j(t,x,A)$.

By hypothesis, $n^2\, \epsilon^4_n \le g_d(n)$. Hence, the second term
on the right-hand side of the statement of Lemma \ref{l3} is bounded
by $C_0 \, \gamma\, n^{d-2} \, g_d(n) \, \exp\{C_0\, \gamma\}$. In
particular, by Lemma \ref{l3} with $\gamma=1$ and by Proposition
\ref{l6} with $a=1/2$ applied to $\mu = \mu^n_t$, $u(x) = w^n(t,x)$,
$J(x) = H^n_j(t,x,A)$, there exists a finite constant $C_0$ such that
\begin{equation*}
H'_n(t) \;\le\; C_0\, H_n(t) \;+\; 
C_0 \,  n^{d-2} \, g_d(n) \;-\; 
\frac 12 \, n^2\, I(g^n_t \,;\, \nu_t^n) \;,
\end{equation*} 
where $g^n_t = d\mu^n_t/ d\nu_t^n$. At this point the assertion of the
theorem follows from Gronwall's lemma.
\end{proof}

\begin{proof}[Proof of Corollary \ref{main2}]
For simplicity, we prove the corollary in the case $\Psi(\eta)=\eta_0$.
Since $v_t$ is Lipschitz-continuous and $H$ is of class $C^2(\bb T^d)$,
\begin{equation*}
a_n\int_{\T^d}H(\theta) \, \Big\{ \dfrac{1}{2}
+\epsilon_nv(t,\theta) \Big\}\, d\theta\;=\;
\dfrac{a_n}{n^d}\sum_{x\in\T_n^d} H(x/n)\, 
\Big\{ \dfrac{1}{2}+\epsilon_nv(t, x/n) \Big\} 
\;+\;  O(\dfrac{a_n}{n})\; .
\end{equation*}
For each $x\in\T_n^d$, let $J^n_x(t)=H(x/n)(\eta_x^n(t)
-1/2-\epsilon_nv(t,x/n))$. Since $a_n/n\to 0$, to conclude the proof it
is enough to show that
\begin{equation*}
\lim_{n\to\infty} E_{\mu^nS_t^n} \Big[\, \Big|\,
\dfrac{a_n}{n^d} \sum_{x\in\T_n^d} J_x^n(t) \,\Big|\, \Big] 
\;=\;0\;.
\end{equation*}

By the entropy inequality and Theorem \ref{main1}, the expectation
appearing in the left-hand side can be bounded above by
\begin{equation*}
\dfrac{C_0}{K} \;+\; \dfrac{1}{K n^{d-2} g_d(n)}
\log E_{\nu_t^n} \Big[ \exp \Big\{ \, \Big| \dfrac{K a_n  g_d(n)}
{n^2} \sum_{x\in\T_n^d} J_x^n(t) \, \Big|\, \Big\} \, \Big]\;,
\end{equation*}
for all $K>0$ and some finite constant $C_0>0$.
Using $\exp \{|x|\} \le \exp\{x\} + \exp\{-x\}$,
it is enough to estimate the previous expression without the absolute value.
Indeed, the other term can be handled by the following argument similarly.
%and since $\limsup_n \gamma^{-1}_n \log
%(a_n + b_n)$ is bounded by the maximum between $\limsup_n
%\gamma^{-1}_n \log a_n$ and $\limsup_n \gamma^{-1}_n \log b_n$,
%provided $\gamma_n\to\infty$,
%it is enough to estimate the previous expression without the absolute value.

As $\nu_t^n$ is a product measure, the second term of the previous
displayed expression without the absolute value is equal to
\begin{equation*}
\dfrac{1}{K n^{d-2} g_d(n)} \sum_{x\in\T_n^d}
\log E_{\nu_t^n} \Big[ \exp \Big\{ \, \dfrac{K a_n  g_d(n)}
{n^2} J_x^n(t) \, \Big\} \, \Big]\;.
\end{equation*}
Since $\exp{x} \le 1+x+2^{-1}x^2\exp{|x|}$ and $\log{(1+y)}\le y$, as
$J_x^n(t)$ has mean zero with respect to $\nu_t^n$, the previous
displayed expression is bounded above by
\begin{equation*}
\dfrac{K\, a^2_n \, g_d(n)}{ 2\, n^{d+2} } \sum_{x\in\T_n^d}
E_{\nu_t^n} \big[ \, J_x^n(t)^2\, \big] \, \exp \Big\{ \, \dfrac{K a_n  g_d(n)}
{n^2} \| H\|_\infty \, \Big\} \;,
\end{equation*}
because $v_t$ is bounded.  Since $a^2_n \, g_d(n)/ n^{2} \to 0$, to
conclude the proof of the corollary, it remains to let $n\to\infty$
and then $K\to\infty$.
\end{proof}

\section{The Burgers viscous  equation}
\label{sec3}

We present in this section the properties of the solutions of the
Burgers viscous equation \eqref{beq} needed in the proof of Theorem
\ref{main1}. Without loss of generality, we assume that in hypothesis
\eqref{34}, $\alpha_0 =1/2$.

Recall the definition of the space $C^{m + \beta}(\bb T^d)$ introduced
just above Theorem \ref{main1}.  Fix a function $v_0$ in $C^{3 +
  \beta}(\bb T^d)$ for some $0<\beta<1$. According to \cite[Theorem
V.6.1]{lsu1968} there exists a unique solution to
\eqref{beq}. Moreover, the partial derivatives of the solution are
uniformly bounded on bounded time intervals. This later result is
summarized in the next lemma.

\begin{lemma}
\label{l9}
Assume that $v_0$ belongs to $C^{3 + \beta}(\bb T^d)$ for some
$0<\beta<1$.  For every $T>0$, there is a finite constant $C_0 =
C_0(T)$, depending only on $v_0$ and $T$, such that
\begin{equation*}
\sup_{0\le t\le T} \sup_{\theta\in \bb T^d} \,\big|\,
v(t,\theta)\,\big|\;\le\; C_0\;, \quad
\max_{1\le j\le d} \, \sup_{0\le t\le T} \, \sup_{\theta\in \bb T^d} \,\big|\,
(\partial_{\theta_j} v) (t,\theta)\,\big|\;\le\; C_0\;,
\end{equation*}
\begin{equation*}
\max_{1\le i, j\le d} \, \sup_{0\le t\le T} \, \sup_{\theta\in \bb T^d} \,\big|\,
(\partial^2_{\theta_i, \theta_j} v) (t,\theta)\,\big|\;\le\; C_0\;, 
\end{equation*}
\begin{equation*}
\max_{1\le i, j, k\le d} \, \sup_{0\le t\le T} \, \sup_{\theta\in \bb T^d} \,\big|\,
(\partial^3_{\theta_i, \theta_j, \theta_k} v) (t,\theta)\,\big|\;\le\; C_0\;. 
\end{equation*}
\end{lemma}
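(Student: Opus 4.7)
The plan is to apply classical parabolic Schauder theory to \eqref{beq}. Since $\alpha_0$ is fixed, by \eqref{32} the matrix $D(\alpha_0)$ is constant and diagonal with strictly positive entries, while $\sigma''(\alpha_0)\,\mb m$ is a fixed vector in $\bb R^d$. Expanding the divergence, \eqref{beq} takes the quasilinear, non-divergence form
\begin{equation*}
\partial_t v \;=\; \sum_{j=1}^d D_{jj}(\alpha_0)\, \partial^2_{\theta_j} v
\;-\; \sigma''(\alpha_0)\,\mb m \cdot v\, \nabla v
\end{equation*}
on the compact manifold $\bb T^d$, with polynomial nonlinearity and constant, uniformly elliptic diffusion. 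Theorem V.6.1 of \cite{lsu1968}, already cited just above the lemma, then furnishes a unique classical solution $v \in C^{2+\beta,\, 1+\beta/2}(\bb T^d \times [0,T])$ for every $T>0$, under the (weaker than hypothesized) requirement $v_0 \in C^{2+\beta}(\bb T^d)$. Compactness of $[0,T]\times \bb T^d$ immediately yields the first three displayed inequalities, namely the bounds on $|v|$, $|\nabla v|$, and $|\nabla^2 v|$.

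The remaining third-derivative bound is obtained by one step of parabolic bootstrap. Fix $i \in \{1,\dots,d\}$ and differentiate the equation in $\theta_i$; then $w_i := \partial_{\theta_i} v$ satisfies the linear parabolic equation
\begin{equation*}
\partial_t w_i \;=\; \sum_{j=1}^d D_{jj}(\alpha_0)\, \partial^2_{\theta_j} w_i
\;-\; \sigma''(\alpha_0)\, \mb m \cdot \bigl( w_i\, \nabla v \,+\, v\, \nabla w_i \bigr),
\end{equation*}
with initial datum $\partial_{\theta_i} v_0 \in C^{2+\beta}(\bb T^d)$. From the previous step, the coefficients $v$ and $\nabla v$ lie at least in $C^{\beta,\beta/2}(\bb T^d\times[0,T])$, so the linear Schauder estimates (Theorem IV.5.1 of \cite{lsu1968}) upgrade $w_i$ to $C^{2+\beta,\,1+\beta/2}(\bb T^d\times[0,T])$. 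Consequently the third derivatives $\partial^3_{\theta_i,\theta_j,\theta_k} v = \partial^2_{\theta_j,\theta_k} w_i$ are uniformly bounded on $[0,T]\times \bb T^d$, giving the fourth inequality.

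No step in this argument is genuinely subtle: on the compact manifold $\bb T^d$ with smooth polynomial nonlinearity, every bound reduces to standard parabolic Schauder theory. The only point that deserves a check is that the hypothesis $v_0 \in C^{3+\beta}(\bb T^d)$ is exactly what is needed to make $\partial_{\theta_i} v_0 \in C^{2+\beta}(\bb T^d)$, which is in turn the minimal initial regularity required by the linear Schauder estimate to close the single round of bootstrap needed for the third spatial derivative.
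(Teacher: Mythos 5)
Your argument is correct, and it is a fleshed-out version of what the paper does: the paper offers no proof at all beyond the sentence preceding the lemma, which invokes \cite[Theorem V.6.1]{lsu1968} with the full $C^{3+\beta}$ initial datum, i.e.\ it applies the quasilinear Schauder theory at the level $l=1+\beta$ to get a solution in $H^{3+\beta,(3+\beta)/2}$ directly, from which all four displayed bounds follow by compactness of $[0,T]\times\bb T^d$. You instead apply the same theorem at the lower level $l=\beta$ (needing only $v_0\in C^{2+\beta}$) and recover the third-derivative bound by one round of bootstrap, differentiating the equation and applying the linear Schauder estimate to $w_i=\partial_{\theta_i}v$. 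Both routes are standard and both consume the hypothesis $v_0\in C^{3+\beta}$ in the same place; your version has the small pedagogical advantage of making explicit exactly where the extra derivative of initial regularity is used. Two points you gloss over, neither fatal: (i) uniform parabolicity, i.e.\ $D_{jj}(\alpha_0)=\widetilde c_j(\alpha_0)>0$, is needed to invoke either version of the Schauder theory — the paper also leaves this implicit, and it amounts to assuming the rates $c_j$ are not degenerate; (ii) to assert that $\partial_{\theta_i}v$ is a classical solution of the differentiated equation one should strictly speaking argue via difference quotients or by identifying $\partial_{\theta_i}v$ with the unique $C^{2+\beta,1+\beta/2}$ solution of the linear problem with datum $\partial_{\theta_i}v_0$; this is routine on the torus, where no compatibility conditions at a boundary arise.
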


Recall the definition of the function $L_n : \bb R_+ \times \bb
T^d_n \to \bb R$ introduced in \eqref{20}.

\begin{lemma}
\label{l5}
Let $v: \bb R_+ \times \bb T^d \to \bb R$ be the solution of
\eqref{beq} and set $w(t,x) = (1/2) + \epsilon_n \, v(t,x/n)$,
$x\in\bb T^d_n$. Then, for every $T>0$, there is a finite constant
$C(T)$, depending only on $T$ and $v_0$, such that
\begin{equation*}
\sup_{0\le t\le T}\, \max_{x\in \bb T^d_n} 
\big|\, L^n (t,x) \, \big| \;\le\;  C(T)
\Big(\, \epsilon^2_n \,+\, \frac {1}{n} \, \Big)  \;,
\end{equation*}
for all $n\ge 1$.
\end{lemma}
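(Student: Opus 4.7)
The plan is to Taylor expand $\sum_j K^n_j(t,x)$ in the two small parameters $\epsilon_n$ and $1/n$ around the constant profile $1/2$, identify the leading $O(\epsilon_n)$ contribution as $\epsilon_n\,(\partial_t v)(t,x/n)$ via the Burgers equation \eqref{beq}, and then bound the remainder by $C(T,v_0)(\epsilon_n^2 + 1/n)$ uniformly in $x\in \bb T^d_n$ and $t\in[0,T]$. Throughout, the uniform bounds on $v$ and its derivatives needed to control the remainders come from Lemma \ref{l9}, and I shall repeatedly use $a_n \epsilon_n = 1$.

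For the symmetric piece $K^{n,sym}_j(t,x) = n^2[E_{\nu^n_{w(t)}}[j_{x-e_j,x}] - E_{\nu^n_{w(t)}}[j_{x,x+e_j}]]$, I first invoke the gradient condition \eqref{02} to rewrite each current as $\sum_p \sum_y m_{j,p}(y)\,\phi_{j,p}(x+y)$, where $\phi_{j,p}(x) = E_{\nu^n_{w(t)}}[\tau_x g_{j,p}]$. Viewing $\phi_{j,p}(x)$ as a polynomial in the marginals $\{w(t,x+z)\}_z$ indexed by the support of $g_{j,p}$, I Taylor expand first in $\epsilon_n$ around $1/2$, then expand $v(t,(x+y+z)/n)$ in $1/n$ around $v(t,x/n)$. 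The cancellation $\sum_y m_{j,p}(y)=0$ gains an extra factor of $1/n$ at the leading $\epsilon_n$-order, and the coefficient $D_p(j,k) = -\sum_y y_k m_{j,p}(y)$ appears; using that $\sum_z \partial_{\alpha_z}\phi_{j,p}|_{\alpha\equiv 1/2} = \widetilde g_{j,p}'(1/2)$, the leading order assembles into $\epsilon_n\sum_k D_{j,k}(1/2)\,\partial^2_{\theta_j,\theta_k}v(t,x/n)$. Proposition \ref{l12} says $D(1/2)$ is diagonal, so this becomes $\epsilon_n D_{j,j}(1/2)\,\partial^2_{\theta_j}v(t,x/n)$ with remainder $O(\epsilon_n^2 + 1/n)$.

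For the asymmetric piece $-a_n(\nabla^n_j I^n_j)(t,x-e_j)$, I expand $C^n_j(t,x) = \mb m_j[1/4 - (\epsilon_n/(2n))\partial_{\theta_j}v(t,x/n) - \epsilon_n^2 v^2(t,x/n)] + \text{h.o.t.}$ and $E_{\nu^n_{w(t)}}[\tau_x c_j] = \widetilde c_j(1/2) + \epsilon_n \sum_z v(t,(x+z)/n)\,\partial_{\alpha_z}\widetilde{(\cdot)}|_{\alpha\equiv 1/2} + (\epsilon_n^2/2) v^2(t,x/n)\widetilde c_j''(1/2) + O(\epsilon_n^3)$. The crucial cancellation is that the $O(\epsilon_n)$ piece of the second factor is of order $\epsilon_n/n$ rather than $\epsilon_n$, because $\sum_z \partial_{\alpha_z}\widetilde{\,\cdot\,}|_{\alpha\equiv 1/2} = \widetilde c_j'(1/2) = 0$; this last equality follows from hypothesis \eqref{34} together with the Einstein relation $\sigma_{j,j}(\alpha) = \chi(\alpha)\widetilde c_j(\alpha)$ in Proposition \ref{l12}, since $\sigma_{j,j}'(1/2) = \chi(1/2)\widetilde c_j'(1/2)$. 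Multiplying out and then applying the discrete gradient and the prefactor $a_n n = n/\epsilon_n$, the only terms of order $\ge \epsilon_n^2 + 1/n$ collapse into $-(\epsilon_n/2)\mb m_j \sigma_{j,j}''(1/2)\,\partial_{\theta_j}(v^2)(t,x/n)$, using $\sigma_{j,j}''(1/2) = -2\widetilde c_j(1/2) + \widetilde c_j''(1/2)/4$.

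Summing the two pieces over $j$ and using that both $D$ and $\sigma$ are diagonal,
\begin{equation*}
\sum_j K^n_j(t,x) \;=\; \epsilon_n\Big[\nabla\cdot[D(1/2)\nabla v] \;-\; \tfrac{1}{2}\nabla\cdot[v^2\,\sigma''(1/2)\,\mb m]\Big](t,x/n) \;+\; R^n(t,x),
\end{equation*}
with $|R^n(t,x)| \le C(T,v_0)(\epsilon_n^2 + 1/n)$. By \eqref{beq}, the bracket equals $(\partial_t v)(t,x/n)$, so $L^n(t,x) = R^n(t,x)$ and the lemma follows. The main obstacle is bookkeeping: several mixed orders ($\epsilon_n$, $\epsilon_n/n$, $\epsilon_n^2$, $\epsilon_n^2/n$) appear, and after multiplication by $n^2$ (symmetric) or $n/\epsilon_n$ (asymmetric) each must be tracked to confirm that only the intended leading terms survive. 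The algebraic identities underlying the survival of exactly the Burgers terms are (i) $\sum_y m_{j,p}(y) = 0$, (ii) diagonality of $D(1/2)$, and (iii) $\widetilde c_j'(1/2) = 0$, each coming from a different ingredient of the setup.
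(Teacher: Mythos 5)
Your proposal is correct and follows essentially the same route as the paper: the paper also splits $K^n_j$ into the symmetric current difference (Lemma \ref{l13}) and the term $a_n(\nabla^n_j I^n_j)$ (Lemma \ref{l14}), Taylor expands each via the expansion of $E_{\nu^n_{w(\cdot)}}[\tau_x g]$ around the homogeneous measure (Lemma \ref{l19} and Corollary \ref{l10}), and relies on exactly your three identities --- $\sum_y m_{j,p}(y)=0$, diagonality of $D$, and $\sigma_{j,j}'(1/2)=0$. The only cosmetic difference is in the asymmetric piece, where the paper groups $c_j(\eta)\eta_0(1-\eta_{e_j})$ into a single cylinder function $d_j$ with $\widetilde d_j=\sigma_{j,j}$ and expands once, whereas you expand the two factors $E_{\nu^n_{w(t)}}[\tau_x c_j]$ and $C^n_j$ separately and invoke $\widetilde c_j'(1/2)=0$; these are equivalent since $\chi'(1/2)=0$.
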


The proof of this lemma is divided in several steps.

\begin{lemma}
\label{l13}
Fix $x\in \bb T^d_n$, $1\le j\le d$ and $0\le t\le T$. We claim that
\begin{equation*}
\begin{aligned}
& n^2\Big\{ E_{\nu^n_{w(t)}} \big[\, j_{x-e_j,x} \, \big] \,-\,  
E_{\nu^n_{w(t)}} \big[\, j_{x,x+e_j} \,  \big] \Big\} \\
&\qquad \;=\; \epsilon_n \,  D_{j,j} (1/2) \, 
(\partial^2_{x_j} v)\, (t,x/n) \; +\; \Big(\, \epsilon^2_n \,+\,
\frac {\epsilon_n}{n} \, \Big) \, R_n \;,
\end{aligned}
\end{equation*}
where $R_n$ is a remainder whose absolute value is bounded by
a finite constant $C(T)$ which depends only on $T$
and on $v$ through the $L^\infty$ norm of its first three derivatives.
\end{lemma}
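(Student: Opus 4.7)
The plan is to start from the gradient condition \eqref{02} and expand $E_{\nu^n_{w(t)}}[j_{z,z+e_j}]$ in $\epsilon_n$ and in $1/n$. Writing $j_{z,z+e_j}=\tau_z j_{0,e_j}=\sum_{p}\sum_y m_{j,p}(y)\,\tau_{z+y}g_{j,p}$ and taking expectation under the inhomogeneous product measure gives
\begin{equation*}
\phi_j(z)\,:=\,E_{\nu^n_{w(t)}}[\,j_{z,z+e_j}\,]\,=\,\sum_{p=1}^{n_j}\sum_{y} m_{j,p}(y)\,h_p(t,z+y)\,,
\end{equation*}
where $h_p(t,z)=E_{\nu^n_{w(t)}}[\tau_z g_{j,p}]$ is a polynomial in the density values $\{w(t,z+y'):y'\in\mathrm{supp}(g_{j,p})\}$. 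The quantity to estimate is then $n^2\{\phi_j(x-e_j)-\phi_j(x)\}$, a discrete divergence of currents, and the strategy is a two-step Taylor expansion: one in $\epsilon_n$ and $y/n$ to get $\phi_j(z)$, and one more in $1/n$ to take the divergence.

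For the first step, I would first replace $h_p(t,z+y)$ by the homogeneous expectation $\widetilde g_{j,p}(w(t,z+y))$; since all increments $w(t,z+y+y')-w(t,z+y)$ are $O(\epsilon_n/n)$, the difference equals $(\epsilon_n/n)\,\Phi_p(t,z+y)+O(\epsilon_n^2/n^2)$ with $\Phi_p$ smooth in its spatial argument on scale $1/n$. The mean-zero condition $\sum_y m_{j,p}(y)=0$ reduces $\sum_y m_{j,p}(y)\Phi_p(t,z+y)$ to a discrete gradient of $\Phi_p$ of size $O(1/n)$, so the net contribution of the replacement is $O(\epsilon_n/n^2)+O(\epsilon_n^2/n^2)$. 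Once $h_p$ is replaced by $\widetilde g_{j,p}(w(t,\cdot))$, I Taylor-expand $\widetilde g_{j,p}(w(t,z+y))$ in $y/n$ around $w(t,z)$: the zeroth-order term is killed by $\sum_y m_{j,p}(y)=0$, the first-order term yields $-(\epsilon_n/n)\widetilde g_{j,p}'(w(t,z))\sum_k D_p(j,k)(\partial_{\theta_k}v)(t,z/n)$ via the definition $D_p(j,k)=-\sum_y y_k\,m_{j,p}(y)$, and the quadratic and higher-order terms are $O(\epsilon_n/n^2)+O(\epsilon_n^2/n^2)$. Summing over $p$ and invoking the diagonality identity \eqref{32} from Proposition \ref{l12} leaves only $k=j$, giving
\begin{equation*}
\phi_j(z)\,=\,-\,\frac{\epsilon_n}{n}\,D_{j,j}(w(t,z))\,(\partial_{\theta_j}v)(t,z/n)\,+\,R(t,z)\,,
\end{equation*}
with a remainder $R(t,z)=O(\epsilon_n/n^2+\epsilon_n^2/n^2)$ uniformly in $z$ and smooth in $z$ on scale $1/n$, as a polynomial in smooth functions of $z/n$.

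For the second step, I would compute the discrete divergence $\phi_j(x-e_j)-\phi_j(x)$. Using the $C^3$ regularity of $v$ (Lemma \ref{l9}), a further discrete Taylor expansion of the main term produces a factor $1/n$ times $D_{j,j}(w(t,x))\,(\partial_{\theta_j}^2 v)(t,x/n)$, and one more Taylor expansion reduces $D_{j,j}(w(t,x))$ to $D_{j,j}(1/2)+O(\epsilon_n)$. The contribution from $R$ is $O((\epsilon_n+\epsilon_n^2)/n^3)$ because $R$ is smooth on scale $1/n$. Multiplying everything by $n^2$ yields the stated main term $\epsilon_n D_{j,j}(1/2)(\partial^2_{x_j}v)(t,x/n)$ with a total remainder of order $\epsilon_n^2+\epsilon_n/n$.

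The main obstacle is the careful bookkeeping of error terms at each Taylor step and, in particular, justifying that the remainder $R(t,z)$ is smooth enough on scale $1/n$ for its discrete divergence to pick up the extra factor $1/n$; this relies essentially on the $C^3$ bounds on $v$ from Lemma \ref{l9}, which is also the reason the constant $C(T)$ depends on $v$ only through its first three derivatives.
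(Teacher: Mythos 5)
Your proposal is correct and follows essentially the same route as the paper: gradient condition \eqref{02}, expansion of the inhomogeneous product expectation around the homogeneous one, the mean-zero property of $m_{j,p}$ converting the first-order term into the coefficients $D_p(j,k)$, the diagonality of Proposition \ref{l12}, and a final Taylor expansion of $D_{j,j}$ around $1/2$; your direct multilinear Taylor expansion of $E_{\nu^n_{w(t)}}[\tau_z g_{j,p}]$ in the density values is just the paper's Wick-coefficient expansion (Lemma \ref{l19}, Corollary \ref{l10}, identity \eqref{26}) written in different coordinates. The only organizational difference is that the paper applies the expansion to the difference $\tau_{y-e_j}g_{j,p}-\tau_y g_{j,p}$ from the start, so every term already carries a discrete gradient of $v$ and no further smoothness of the remainder is needed, whereas you expand per site and difference at the end — the extra smoothness-on-scale-$1/n$ argument you flag is indeed required there, and your justification of it via the $C^3$ bounds of Lemma \ref{l9} is sound.
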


\begin{proof}
By definition of the current and by assumption \eqref{02}, the
difference inside braces is equal to
\begin{equation}
\label{25}
\sum_{p=1}^{n_j} \sum_{y\in\bb Z^d} m_{j,p}(y) \,
E_{\nu^n_{w(t)}} \Big[\,  \tau_{x+y-e_j} \, g_{j,p}
- \tau_{x+y} \, g_{j,p} \, \Big]\;.
\end{equation}
We may rewrite the previous expectation as $E_{\nu^n_{w_{t,x}(\cdot)}} [\,
\tau_{y-e_j} \, g_{j,p} \, - \, \tau_y\, g_{j,p} \,]$, where $w_{t,x}
(z) = w(t,x+z)$, $z\in \bb T^d_n$. By Corollary \ref{l10}, this
expectation can be written as the sum of two expressions and a
remainder. We consider them separately.

The contribution to \eqref{25} of the first expression in Corollary
\ref{l10} is equal to
\begin{equation*}
\frac {-\, \epsilon_n}{n} \, 
\sum_{p=1}^{n_j} \sum_{y\in\bb Z^d} m_{j,p}(y) \,
\sum_{z} [\nabla^n_j v]\, (t, [x+z-e_j]/n)  \, 
E_{\nu^n_{w(t,x)}} \big[ \, \tau_y g_{j,p}  \, \omega_z \, \big] \;,
\end{equation*}
where $\nu^n_{w(t,x)}$ is the homogeneous product Bernoulli measure
with density $w(t,x)$. Fix $p$ and $y$. Performing a change of
variables we may rewrite the sum over $z$ as
\begin{equation}\label{1000}
\sum_{z} [\nabla^n_j v]\, (t, [x+z+y-e_j]/n)  \, 
E_{\nu^n_{w(t,x)}} \big[ \, g_{j,p}  \, \omega_z \, \big] \;.
\end{equation}
Performing a Taylor expansion around $(t,x/n)$,
\begin{align*}
[\nabla^n_{x} v](x') \;:&=\; n[\, v(t, [x+x']/n)  \,-\, v(t, x/n)\,] \\
\;&=\; \sum_{k=1}^dx_k'(\partial_{x_k} v)\, (t,x/n)\\
& \quad + \frac1n\, \sum_{k,k'} x_k'x_{k'}'(\partial_{x_k} v)\, (t,x/n)(\partial_{x_{k'}} v)\, (t,x/n) \;,
\end{align*}
plus $R_n/n^2$, where $R_n$ is a remainder whose absolute value is
bounded by $C_0$, for some constant $C_0$ depending only on $T$ and on
the $L^\infty$ norm of the first three derivatives of $v$. The
expression of the remainder $R_n$ may change below from line to line.
Note that $[\nabla^n_j v]\, (t, [x+z+y-e_j]/n) = [\nabla^n_{x} v](z+y) - [\nabla^n_{x} v](z+y-e_j)$.
Therefore an easy computation yields that the sum in \eqref{1000} becomes
\begin{align*}
& (\partial_{x_j} v)\, (t,x/n)  \, 
\sum_{z} E_{\nu^n_{w(t,x)}} \big[ \, g_{j,p}  \, \omega_z \, \big]  \\
&\quad \;-\; \frac {1}{2n} \, \sum_{z}  \Big\{ (\partial^2_{x_j}
v)\, (t,x/n)  \, -\, 2\, \sum_{k=1}^d (y_k+z_k)\, (\partial^2_{x_j, x_k} v)\,
(t,x/n) \Big\}\,
E_{\nu^n_{w(t,x)}} \big[ \, g_{j,p}  \, \omega_z \, \big] \;,
\end{align*}
plus $R_n/n^2$.

Since for each $j$ and $p$, $\sum_y m_{j,p}(y) = 0$, in view of
\eqref{26}, the contribution to \eqref{25} of the first expression in
Corollary \ref{l10} is equal to
\begin{align*}
& \frac {\epsilon_n}{n^2} \, \sum_{p=1}^{n_j} \sum_k D_p(j,k) \, 
(\partial^2_{x_j, x_k} v)\, (t,x/n) \, \widetilde{g}'_{j,p} (w(t,x))  
\; +\; \frac {\epsilon_n}{n^3} \, R_n  \\
&\quad 
=\; \frac {\epsilon_n}{n^2} \,  D_{j,j} (w(t,x)) \, 
(\partial^2_{x_j} v)\, (t,x/n) 
\; +\; \frac {\epsilon_n}{n^3} \, R_n \;,
\end{align*}
where $D_p(j,k)$, $D_{j,j}(\rho)$ have been introduced in
\eqref{27}. We used in the previous step the identities \eqref{32}.
As $w(t,x) = (1/2) \,+\, \epsilon_n \, v(t,x/n)$, by a Taylor
expansion, the previous expression is equal to
\begin{equation*}
\frac {\epsilon_n}{n^2} \,  D_{j,j} (1/2) \, 
(\partial^2_{x_j} v)\, (t,x/n) 
\; +\; \Big(\, \frac{\epsilon^2_n}{n^2} \,+\,
\frac {\epsilon_n}{n^3} \, \Big) \, R_n \;.
\end{equation*} 

We turn to the contribution to \eqref{25} of the second expression in
Corollary \ref{l10}. It is equal to
\begin{equation*}
\frac{\epsilon^{2}_n}{2\, n}\, \, 
\sum_{p=1}^{n_j} \sum_{y\in\bb Z^d} m_{j,p}(y) \,
\sum_{z\not = z'} c_{z,z'}  \, 
E_{\nu^n_{w(t,x)}} \big[ \, (\tau_y g_{j,p})  
\, \omega_z\, \omega_{z'} \, \big]  \;,
\end{equation*}
where 
\begin{align}\label{100}
c_{z,z'} \; & =\; \frac 1n\, (\nabla^n_j v)\, ([z-e_j]/n)  \,  
(\nabla^n_j v)\, ([z'-e_j]/n)  \notag\\
\; & - \; (\nabla^n_j v)\, ([z-e_j]/n)  \, [v(z'/n) - v(0)] \notag\\
\; & - \; (\nabla^n_j v) \, ([z'-e_j]/n)  \, [v(z/n) - v(0)] \;.
\end{align}
By a change of variables, we may write this expression as
 \begin{equation*}
\frac{\epsilon^{2}_n}{2\, n}\, \, 
\sum_{p=1}^{n_j} \sum_{y\in\bb Z^d} m_{j,p}(y) \,
\sum_{z\not = z'} c_{z+y,z'+y}  \, 
E_{\nu^n_{w(t,x)}} \big[ \, g_{j,p}
\, \omega_z\, \omega_{z'} \, \big]  \;.
\end{equation*}
The fact that $\sum_y m_{j,p}(y) =0$ yields
that this sum is equal to
\begin{equation*}
\frac{\epsilon^{2}_n}{2\, n}\, \, 
\sum_{p=1}^{n_j} \sum_{y\in\bb Z^d} m_{j,p}(y) \,
\sum_{z\not = z'} \left[ \,c_{z+y,z'+y} \, -\, c_{z,z'}  \, \right] 
E_{\nu^n_{w(t,x)}} \big[ \, g_{j,p}
\, \omega_z\, \omega_{z'} \, \big]  \;.
\end{equation*}
Note that $m_{j,p}(y)$ and the last expectation vanish except for a finite number
of $y, z, z'$. For such $y, z, z'$, a Taylor expansion shows that
$c_{z+y,z'+y} \, -\, c_{z,z'}$ is of order $n^{-2}$, uniformly in $y, z, z'$.
Therefore this sum is bounded in absolute value by $C(T)
\epsilon^{2}_n/n^3$. Since the third expression in Corollary \ref{l10}
is bounded by $C(T) \epsilon^{3}_n/n^3$, the proof is complete.
\end{proof}

\begin{lemma}
\label{l14}
Fix $x\in \bb T^d_n$, $1\le j\le d$ and $0\le t\le T$. We claim that
\begin{equation*}
\begin{aligned}
&  a_n\, (\, \nabla^n_j \, I^n_j \,)\, (t, x-e_j) \\
&\qquad \;=\;
\epsilon_n\, \mb m_j\, \sigma''_{j,j}(1/2)\, 
v (t,x/n) \, (\partial_{x_j} v)\, (t,x/n) \,
\;+\; \Big(\, \epsilon^2_n \,+\,
\frac {1}{n} \, \Big) \, R_n \;,
\end{aligned}
\end{equation*}
where $R_n$ is a remainder whose absolute value is bounded by
$C(T)$, where $C(T)$ is a finite constant which depends only on $T$
and on $v$ through the $L^\infty$ norm of its first three derivatives.
\end{lemma}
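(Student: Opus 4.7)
The plan is to write $I^n_j(t,x) = \widehat{c}(t,x)\, C^n_j(t,x)$ with $\widehat{c}(t,x) := E_{\nu^n_{w(t)}}[\tau_x c_j]$, so that by the discrete Leibniz rule
\[
(\nabla^n_j I^n_j)(t, x-e_j) \;=\; \widehat{c}(t,x)\, (\nabla^n_j C^n_j)(t, x-e_j) \;+\; (\nabla^n_j \widehat{c})(t, x-e_j)\, C^n_j(t, x-e_j)\;,
\]
and to Taylor-expand each factor around $\rho = 1/2$ using Lemma \ref{l9} (smoothness of $v$) together with the key observation that $\sigma'_{j,j}(1/2)=0$ forces $\widetilde{c}_j'(1/2)=0$, since $\sigma_{j,j}(\rho)=\chi(\rho)\,\widetilde{c}_j(\rho)$ and $\chi'(1/2)=0$.

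The factor $C^n_j$ is an explicit polynomial in $w_t$; writing $w_t(x) = (1/2) + \epsilon_n\, \varphi(x)$ with $\varphi(x)=v(t, x/n)$ gives
\[
C^n_j(t,x) \;=\; \mb m_j\, \Big[\, \tfrac{1}{4} \,-\, \tfrac{\epsilon_n}{2n}\, (\nabla^n_j \varphi)(x) \,-\, \epsilon_n^2\, \varphi(x)\,\varphi(x+e_j) \, \Big]\;.
\]
Applying $\nabla^n_j$, the constant cancels, and upon expressing the second discrete difference $n[\varphi(x+e_j)-2\varphi(x)+\varphi(x-e_j)]$ as $(1/n)\partial^2_{x_j} v + O(1/n^3)$ and the centered first difference $\varphi(x+e_j)-\varphi(x-e_j)$ as $(2/n)\partial_{x_j} v + O(1/n^3)$, one obtains
\[
(\nabla^n_j C^n_j)(t, x-e_j) \;=\; -\,\tfrac{\mb m_j \epsilon_n}{2n}\, \partial^2_{x_j} v \;-\; 2\, \mb m_j\, \epsilon_n^2\, v\, \partial_{x_j} v \;+\; O(\epsilon_n^2/n^2)\;.
\]

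For $\widehat{c}$, since $c_j$ is a cylinder function and $w_t$ varies by $O(\epsilon_n/n)$ over its support, a polynomial expansion of $E_{\nu^n_{w(t)}}[\tau_x c_j]$ around the constant profile $w_t(x)$ yields $\widehat{c}(t,x) = \widetilde{c}_j(w_t(x)) + R(t,x)$ with $R = O(\epsilon_n/n)$; moreover a second-order Taylor argument for $R$ using the smoothness of $v$ shows $(\nabla^n_j R)(t,x-e_j) = O(\epsilon_n/n)$. Because $\widetilde{c}_j'(1/2)=0$,
\[
\widetilde{c}_j(w_t(x)) \;=\; \widetilde{c}_j(1/2) \,+\, \tfrac{1}{2}\epsilon_n^2\, \varphi(x)^2\, \widetilde{c}_j''(1/2) \,+\, O(\epsilon_n^3)\;,
\]
and using $\varphi(x+e_j)^2 - \varphi(x)^2 = (2/n)\, v\, \partial_{x_j} v + O(1/n^2)$ gives
\[
(\nabla^n_j \widehat{c})(t, x-e_j) \;=\; \epsilon_n^2\, \widetilde{c}_j''(1/2)\, v\, \partial_{x_j} v \;+\; O(\epsilon_n^3) \;+\; O(\epsilon_n/n)\;.
\]

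Combining the two Leibniz contributions, multiplying by $a_n = 1/\epsilon_n$, and using the expansions $\widehat{c}(t,x) = \widetilde{c}_j(1/2) + O(\epsilon_n^2 + \epsilon_n/n)$ and $C^n_j(t, x-e_j) = \mb m_j/4 + O(\epsilon_n/n)$, the two leading contributions to $a_n(\nabla^n_j I^n_j)(t,x-e_j)$ turn out to be $-2\mb m_j\, \widetilde{c}_j(1/2)\, \epsilon_n\, v\, \partial_{x_j} v$ and $(\mb m_j/4)\, \widetilde{c}_j''(1/2)\, \epsilon_n\, v\, \partial_{x_j} v$, both modulo errors of order $\epsilon_n^2 + 1/n$; the identity $\sigma''_{j,j}(1/2) = -2\widetilde{c}_j(1/2) + (1/4)\widetilde{c}_j''(1/2)$, obtained from $\sigma_{j,j}=\chi\,\widetilde{c}_j$ by differentiating twice at $\rho=1/2$ and using $\chi''(1/2)=-2$ with $\widetilde{c}_j'(1/2)=0$, then yields the claim. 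The main obstacle is the error bookkeeping: one must verify that $\widetilde{c}_j(w_t(x+e_j)) - \widetilde{c}_j(w_t(x))$ is of size $O(\epsilon_n^2/n)$ rather than $O(\epsilon_n^2)$, which is what prevents the naive $O(n\epsilon_n^3)$ term from surviving after multiplication by $n$ to form the discrete gradient; this refinement crucially relies on both the vanishing of $\widetilde{c}_j'$ at $1/2$ and the $C^3$ regularity of $v$ furnished by Lemma \ref{l9}.
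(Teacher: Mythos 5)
Your argument is correct, and the error bookkeeping you flag at the end is indeed the crux; your resolution of it (the discrete gradient of the Taylor remainder of $\widetilde c_j\circ w_t$ gains a factor $1/n$ because it is a first difference of a $C^1$ function of $w_t$, whence $O(\epsilon_n^3)$ rather than $O(n\epsilon_n^3)$ after multiplying by $n$) is sound. However, your route differs from the paper's. The paper does not split $I^n_j$ by the Leibniz rule at all: it introduces the single cylinder function $d_j(\eta)=c_j(\eta)\,\eta_0\,[1-\eta_{e_j}]$ and uses the independence granted by the hypothesis that $c_j$ does not depend on $\eta_0,\eta_{e_j}$ to write $I^n_j(t,x)=\mb m_j\, E_{\nu^n_{w(t)}}[\tau_x d_j]$, so that the discrete gradient becomes a difference of expectations of translates of one local function. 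It then invokes Corollary \ref{l10} (the expansion of the inhomogeneous product measure around the homogeneous one) and the identity \eqref{26}, $\sum_z E_{\nu_\theta}[d_j\,\omega_z]=\widetilde d_j'(\theta)$, together with $\widetilde d_j=\chi\,\widetilde c_j=\sigma_{j,j}$, so that $\sigma_{j,j}'(w(t,x))$ appears in one stroke and hypothesis \eqref{34} is applied in a single final Taylor step, $\sigma_{j,j}'(w(t,x))=\epsilon_n\,\sigma_{j,j}''(1/2)\,v(t,x/n)+O(\epsilon_n^2)$. Your version instead expands $\widehat c$ and $C^n_j$ separately, uses $\widetilde c_j'(1/2)=0$ (which you correctly note is equivalent to \eqref{34} since $\chi'(1/2)=0$), and reassembles $\sigma_{j,j}''(1/2)=-2\widetilde c_j(1/2)+\tfrac14\widetilde c_j''(1/2)$ at the end; this is more elementary and fully explicit, at the cost of tracking two Leibniz contributions and an extra algebraic identity, whereas the paper's choice of observable makes the mobility and its derivatives appear automatically and keeps the remainder analysis confined to Corollary \ref{l10}.
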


\begin{proof}
Let $d_j$ be the cylinder function defined by $d_j(\eta) = c_j(\eta)
\, \eta_0 \, [1-\eta_{e_j}]$. With this notation and since $c_j$ does
not depend on $\eta_0$, $\eta_{e_j}$, we may rewrite the
left-hand side of the statement of the lemma as
\begin{equation*}
\frac{n}{\epsilon_n}\, \mb m_j\, \Big\{ E_{\nu^n_{w(t)}} 
\big[\, \tau_x d_j \, \big] \,-\,  
E_{\nu^n_{w(t)}} \big[\, \tau_{x-e_j} \, d_j \,  \big] \Big\}\;.
\end{equation*}

Recall the definition of the measure $\nu^n_{w_{t,x}(\cdot)}$, introduced
just after \eqref{25}, and that $\nu^n_{w(t,x)}$ represents the
homogeneous product Bernoulli measure with density $w(t,x)$. By
Corollary \ref{l10} and since the absolute value of $c_{z,z'}$ is
bounded by $C(T)/n$, the previous expression is equal to
\begin{equation*}
\mb m_j\, \sum_{z} [\nabla^n_j v]\, (t,[x+z-e_j]/n)  \, 
E_{\nu^n_{w(t,x)}} \big[ \, d_j  \, \omega_z \, \big] \;+\; \frac
{\epsilon_n}{n} \,  R_n \;.
\end{equation*}
In this formula and below, $R_n$ is a remainder whose absolute value
is bounded by $C_0$, for some constant $C_0$ depending only on $T$ and
on the $L^\infty$ norm of the first three derivatives of $v$. The
exact expression of the remainder $R_n$ may change from line to line.

A Taylor expansion around $x/n$ yields that the previous sum is equal
to
\begin{equation*}
\mb m_j\, (\partial_{x_j} v)\, (t,x/n)  \, \sum_{z}
E_{\nu^n_{w(t,x)}} \big[ \, d_j  \, \omega_z \, \big] \;+\; \frac
{1}{n} \,  R_n \;.
\end{equation*}
By definition of $d_j$ and by \eqref{33}, $\widetilde{d}_j (\rho) =
\widetilde{c}_j (\rho) \, \rho \, [1-\rho] =
\sigma_{j,j}(\rho)$. Hence, by \eqref{26}, the sum over $z$ is equal
to $\sigma'_{j,j}(w (t,x/n))$. By \eqref{34} and a Taylor expansion,
this later expression is equal to $\epsilon_n\, \sigma''_{j,j}(1/2)\,
v (t,x/n) + \epsilon_n^2 R_n$. This completes the proof of the lemma. 
\end{proof}

\begin{proof}[Proof of Lemma \ref{l5}]
The proof is a straightforward consequence of Lemmata \ref{l13} and
\ref{l14} and from the fact that $v$ is the solution of the equation
\eqref{beq}. In both lemmata, the constant depends on
the $L^\infty$ norm of the first three derivatives of $v$. Lemma
\ref{l9} states that these derivatives are bounded by a constant which
depends on $v_0$.
\end{proof}

We conclude this section with some results used above.  Let $v: \bb
T^d \to \bb R$ be a function in $C^1(\bb T^d)$, and let $w: \bb T^d_n
\to \bb R$ be given by $w(x) = (1/2) + \epsilon_n v(x/n)$. Recall from
\eqref{23} that we denote by $\nu^n_{w(\cdot)}$ the product measure on
$\Omega_n$ in which the density of $\eta_x$ is $w(x/n)$, while
$\nu^n_{w(0)}$ represents the homogeneous product measure with
constant density equal to $w(0)$.

\begin{lemma}
\label{l19}
Let $g: \Omega_n \to \bb R$ be a local function. Then, there exists a
constant $C_0$, depending only on the cylinder function $g$ and on
$\Vert \nabla v\Vert_\infty$, such that
\begin{align*}
E_{\nu^n_{w(\cdot)}} \big[ g \big] \; & =\; E_{\nu^n_{w(0)}} \big[ g
\big] 
\;+\; \epsilon_n\, \sum_{z} [v(z/n) - v(0)] \, 
E_{\nu^n_{w(0)}} \big[ \, g  \, \omega_z \, \big] \\
\; & +\; \frac{1}{2}\, \epsilon^{2}_n\, 
\sum_{z\not = z'} [v(z/n) - v(0)] \, [v(z'/n) - v(0)] \, 
E_{\nu^n_{w(0)}} \big[ \, g  \, \omega_z\, \omega_{z'} \, \big] 
\; +\; R_n \;,
\end{align*}
where $|R_n| \le C_0 (\epsilon_n/n)^3$,
$\omega_z = [\eta_z - w(0)]/w(0) [1-w(0)]$. On the right hand side,
the sum is carried out over all $z$ (and $z'\not =z$) in the support
of $g$.
\end{lemma}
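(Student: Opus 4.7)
The plan is to reduce the computation to a finite-order expansion of the Radon-Nikodym derivative $d\nu^n_{w(\cdot)}/d\nu^n_{w(0)}$. A direct check on the two values $\eta_z=0,1$ of the ratio of Bernoulli marginals yields the identity
\begin{equation*}
\frac{d\nu^n_{w(\cdot)}}{d\nu^n_{w(0)}}(\eta) \;=\; \prod_{z\in \bb T^d_n} \big[\, 1 \,+\, (w(z/n) - w(0))\, \omega_z\,\big]\;,
\end{equation*}
which is the core algebraic input. Setting $a_z = w(z/n) - w(0) = \epsilon_n [v(z/n)-v(0)]$, one has $E_{\nu^n_{w(\cdot)}}[g] = E_{\nu^n_{w(0)}}[\, g \prod_z (1 + a_z \omega_z)\,]$.

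Next, let $S$ denote the (finite) support of $g$. Since $\nu^n_{w(0)}$ is a product measure and $g$ depends only on $\{\eta_z : z\in S\}$, the expectation factorizes over $S$ and $S^c$. Each factor $1 + a_z\omega_z$ with $z\notin S$ has expectation equal to $1$ because $E_{\nu^n_{w(0)}}[\omega_z]=0$, so independence eliminates all sites outside $S$:
\begin{equation*}
E_{\nu^n_{w(\cdot)}}[g] \;=\; E_{\nu^n_{w(0)}}\Big[\, g \prod_{z\in S}\big(1 + a_z \omega_z\big)\, \Big]\;.
\end{equation*}

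Now I would expand the finite product over $S$ as a sum over subsets $T\subset S$ of $\prod_{z\in T} a_z\omega_z$. The contributions of $|T|=0,1,2$ produce precisely the three explicit terms in the statement, with the factor $1/2$ arising from rewriting the unordered sum over two-element subsets as $(1/2)\sum_{z\neq z'}$. All remaining terms, which have $|T|\ge 3$, are collected into $R_n$.

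The final step is bounding $R_n$. Since $v\in C^1(\bb T^d)$, for each $z\in S$ one has $|v(z/n)-v(0)|\le \|\nabla v\|_\infty \, |z|/n \le C(g)/n$, hence $|a_z|\le C(g)\,\epsilon_n/n$. For $n$ sufficiently large, $w(0)$ lies in $[1/4,3/4]$, so $\chi(w(0))$ is bounded below and $|\omega_z|$ is uniformly bounded; $g$ is bounded by assumption. Since $|S|$ is fixed, there are at most $2^{|S|}$ terms in the remainder, each containing at least three factors $a_z$, so $|R_n|\le C_0 (\epsilon_n/n)^3$ with $C_0$ depending only on $g$ and $\|\nabla v\|_\infty$ as required. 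The only substantive step is the factorization argument that eliminates sites outside the support of $g$; everything else is a routine algebraic expansion with explicit bounds.
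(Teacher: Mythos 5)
Your proof is correct and follows essentially the same route as the paper: both express $E_{\nu^n_{w(\cdot)}}[g]$ via the Radon--Nikodym derivative of the two product measures restricted to the support of $g$ and expand it to second order in $\epsilon_n$, identifying the linear term with $\omega_z$ and the quadratic term with $\omega_z\omega_{z'}$. The only cosmetic difference is that the paper writes the density as $e^{H}$ with $H$ a sum of logarithms of the marginal ratios and invokes a third-order Taylor expansion, whereas you expand the finite product $\prod_{z}(1+a_z\omega_z)$ exactly and bound the terms with three or more factors --- an equivalent and, if anything, slightly cleaner bookkeeping of the same computation.
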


\begin{proof}
Fix a local function $g: \Omega_n \to \bb R$, and denote by
$\Lambda(g)$ its support. Clearly, as $\nu^n_{w(\cdot)}$,
$\nu^n_{w(0)}$ are product measures,
\begin{equation*}
E_{\nu^n_{w(\cdot)}} \big[ g \big] \; =\; 
E_{\nu^n_{w(0)}} \big[ \, g \, e^H \, \big] \;,
\end{equation*}
where 
\begin{align*}
H(\eta) \; & =\; \sum_{z\in \Lambda(g)} \eta_z \log \Big( 1 \,+\,
\frac{\epsilon_n \, [v(z/n) - v(0)]}{w(0)} \Big)  \\
\;& +\;
\sum_{z\in \Lambda(g)} [1-\eta_z] \log \Big( 1 \,-\,
\frac{\epsilon_n \, [v(z/n) - v(0)]}{1- w(0)} \Big) \;.
\end{align*}
The result follows from a Taylor expansion up to the third order.
\end{proof}

Recall from \eqref{24} the definition of the discrete partial
derivative in the $j$-th direction represented by $\nabla^n_j$,
and from \eqref{100} the definition of $c_{z,z'}$.

\begin{corollary}
\label{l10}
Let $g: \Omega_n \to \bb R$ be a local function. Then, there exists a
constant $C_0$, depending only on the cylinder function $g$ and on
$\Vert \nabla v\Vert_\infty$, such that
\begin{align*}
E_{\nu^n_{w(\cdot)}} \big[ \tau_{-e_j} g \,-\, g \big] \; & =\;
\frac {-\, \epsilon_n}{n} \, \sum_{z} [\nabla^n_j v]\, ([z-e_j]/n)  \, 
E_{\nu^n_{w(0)}} \big[ \, g  \, \omega_z \, \big] \\
\; & +\; \frac{\epsilon^{2}_n}{2\, n}\, \, 
\sum_{z\not = z'} c_{z,z'}  \, 
E_{\nu^n_{w(0)}} \big[ \, g  \, \omega_z\, \omega_{z'} \, \big] 
\; +\; R_n \;,
\end{align*}
where $|R_n| \le C_0 (\epsilon_n/n)^3$.
\end{corollary}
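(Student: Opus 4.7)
The plan is to apply Lemma \ref{l19} separately to the two local functions $\tau_{-e_j}g$ and $g$, expand each up to second order in $\epsilon_n$, and subtract. For $g$ the expansion is given verbatim. The function $\tau_{-e_j}g$ is itself a local function (its support is the translate of the support of $g$), so Lemma \ref{l19} produces an analogous expansion; translation invariance of the homogeneous product Bernoulli measure $\nu^n_{w(0)}$ then lets me rewrite each expectation as
\[
E_{\nu^n_{w(0)}}\bigl[(\tau_{-e_j}g)\,\omega_w\bigr]=E_{\nu^n_{w(0)}}[g\,\omega_{w+e_j}],\qquad
E_{\nu^n_{w(0)}}\bigl[(\tau_{-e_j}g)\,\omega_w\omega_{w'}\bigr]=E_{\nu^n_{w(0)}}[g\,\omega_{w+e_j}\omega_{w'+e_j}].
\]
A change of summation variable $z=w+e_j$, $z'=w'+e_j$ then puts both expansions on the same index set, namely the support of $g$.

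Subtracting, the zeroth-order terms $E_{\nu^n_{w(0)}}[g]$ cancel. The first-order difference collapses to
\[
\epsilon_n\sum_{z}\bigl\{v((z-e_j)/n)-v(z/n)\bigr\}\,E_{\nu^n_{w(0)}}[g\,\omega_z]
\;=\;-\,\frac{\epsilon_n}{n}\sum_{z}(\nabla^n_jv)((z-e_j)/n)\,E_{\nu^n_{w(0)}}[g\,\omega_z],
\]
by the definition \eqref{24} of $\nabla^n_j$. For the quadratic part I would set $c=v(0)$, $a=v((z-e_j)/n)$, $a'=v(z/n)$, $b=v((z'-e_j)/n)$, $b'=v(z'/n)$ and invoke the algebraic identity
\[
(a-c)(b-c)-(a'-c)(b'-c)=(a-a')(b-b')+(a-a')(b'-c)+(a'-c)(b-b').
\]
Since $a-a'=-\tfrac{1}{n}(\nabla^n_jv)((z-e_j)/n)$ and $b-b'=-\tfrac{1}{n}(\nabla^n_jv)((z'-e_j)/n)$, a common factor $1/n$ pops out and the remaining three-term expression matches exactly the definition \eqref{100} of $c_{z,z'}$, yielding the stated quadratic contribution $\tfrac{\epsilon_n^2}{2n}\sum_{z\neq z'}c_{z,z'}\,E_{\nu^n_{w(0)}}[g\,\omega_z\omega_{z'}]$.

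The new remainder $R_n$ is the sum of the two third-order remainders supplied by Lemma \ref{l19}; each is bounded by $C_0(\epsilon_n/n)^3$ with $C_0$ depending only on $g$ and $\|\nabla v\|_\infty$, so their sum satisfies the same bound. There is no substantive obstacle: the only nontrivial step is spotting the algebraic identity above, which is precisely what manufactures the extra factor $1/n$ in front of $c_{z,z'}$ (the reason the corollary is genuinely sharper than merely applying Lemma \ref{l19} twice). Everything else is bookkeeping enabled by translation invariance of the homogeneous Bernoulli measure $\nu^n_{w(0)}$.
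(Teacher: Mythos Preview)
Your proposal is correct and essentially matches the paper's argument. The only cosmetic difference is that the paper applies Lemma~\ref{l19} once to the single local function $\tau_{-e_j}g-g$ and then splits the expectation and performs the change of variables, whereas you apply Lemma~\ref{l19} separately to $\tau_{-e_j}g$ and to $g$ before subtracting; after the translation-invariance substitution these two routes coincide line by line, including the algebraic identity you display (which is exactly the ``change of variables'' the paper alludes to for the quadratic term).
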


\begin{proof}
Fix  a local function $g: \Omega_n \to \bb R$. According to the
previous lemma, the expectation appearing on the left-hand side of the
statement is equal to
\begin{align*}
&\; \epsilon_n\, \sum_{z} [v(z/n) - v(0)] \, 
E_{\nu^n_{w(0)}} \big[ \, [\, \tau_{-e_j} g \,-\, g\,]   \, \omega_z \, \big] \\
& +\; \frac{1}{2}\, \epsilon^{2}_n\, 
\sum_{z\not = z'} [v(z/n) - v(0)] \, [v(z'/n) - v(0)] \, 
E_{\nu^n_{w(0)}} \big[ \, [\, \tau_{-e_j} g \,-\, g\,]  \, \omega_z\, \omega_{z'} \, \big] 
\; +\; R_n \;,
\end{align*}
where $|R_n| \le C_0 (\epsilon_n/n)^3$, for some constant $C_0$ which
depends only on $g$ and $\Vert \nabla v\Vert_\infty$. Here, the sum
over $z$ is carried out over all $z$ (and $z'\not =z$) in the support
of $\tau_{-e_j} g \,-\, g$. As the measure $\nu^n_{w(0)}$ is
homogeneous, a change of variables permits to complete the proof of
the lemma.  
\end{proof}

Let $g: \{0,1\}^{\bb Z^d} \to \bb R$ be a local function. Recall from
\eqref{28} the definition of the smooth function
$\widetilde g:[0,1] \to \bb R$. A similar computation to the one
presented in the proof of Lemma \ref{l19} yields that
\begin{equation}
\label{26}
\widetilde g' (\theta) \;=\; \sum_{z} 
E_{\nu_\theta} \big[ \, g  \, \omega_z \, \big] \;, \quad \theta\in[0,1]\;.
\end{equation}

Along the same lines, we may also prove the Einstein relation. 

\begin{proposition}
\label{l12}
For every $\alpha \in (0,1)$, $1\le j\le d$, 
\begin{equation*}
\widetilde{c}_j(\alpha) \;=\; \sum_{p=1}^{n_j} D_p(j, j) \, 
\widetilde{g}'_{j,p}(\alpha)  \quad\text{and}\quad
\sum_{p=1}^{n_j} D_p(j, k) \, 
\widetilde{g}'_{j,p}(\alpha) \;=\; 0 
\quad\text{for}\quad k \,\not =\, j \;.
\end{equation*}
\end{proposition}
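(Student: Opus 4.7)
The plan is to derive both identities simultaneously from a single computation: multiply the gradient condition \eqref{02} by $z_k\,\omega_z$, sum over $z\in\mathbb{Z}^d$, and take expectation with respect to $\nu_\alpha$. The identity
$$\widetilde{g}'(\alpha) \;=\; \sum_{z} E_{\nu_\alpha}\big[\,g\,\omega_z\,\big]$$
stated in \eqref{26} will be the key tool on the right-hand side, while the independence of $c_j$ from $\eta_0$ and $\eta_{e_j}$ is the key tool on the left.

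For the left-hand side, one computes $\sum_{z}z_k\, E_{\nu_\alpha}[\,c_j(\eta)(\eta_0-\eta_{e_j})\,\omega_z\,]$. Since $c_j$ does not depend on $\eta_0$ or $\eta_{e_j}$, the expectation vanishes unless $z\in\{0,e_j\}$. A direct computation using $E_{\nu_\alpha}[\eta_z(\eta_z-\alpha)]=\chi(\alpha)$ shows that the $z=0$ term contributes $\widetilde{c}_j(\alpha)$ and the $z=e_j$ term contributes $-\widetilde{c}_j(\alpha)$. Weighting by $z_k=0$ and $z_k=\delta_{j,k}$, respectively, gives a net left-hand side of $-\delta_{j,k}\,\widetilde{c}_j(\alpha)$.

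For the right-hand side, I would apply translation invariance, $E_{\nu_\alpha}[(\tau_y g_{j,p})\omega_z]=E_{\nu_\alpha}[g_{j,p}\,\omega_{z-y}]$, and change variables $z'=z-y$ so that $z_k=z'_k+y_k$ splits the sum into two pieces. The first piece, $\sum_{z'}z'_k\,E_{\nu_\alpha}[g_{j,p}\omega_{z'}]$, is annihilated by the factor $\sum_y m_{j,p}(y)=0$, while the second piece uses $\sum_y y_k m_{j,p}(y)=-D_p(j,k)$ combined with \eqref{26} to yield $-D_p(j,k)\,\widetilde{g}'_{j,p}(\alpha)$. Summing over $p$ and equating the two sides gives $\delta_{j,k}\,\widetilde{c}_j(\alpha)=\sum_p D_p(j,k)\,\widetilde{g}'_{j,p}(\alpha)$, which encodes both claims of the proposition by specializing to $k=j$ and $k\neq j$.

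There is no real obstacle; the only care needed is verifying that the sums over $z$ converge (they do, since $c_j(\eta_0-\eta_{e_j})$ and $g_{j,p}$ are cylinder functions, so $E_{\nu_\alpha}[\,\cdot\,\omega_z\,]$ vanishes outside a finite set of $z$'s) and keeping track of signs in the definition $D_p(j,k)=-\sum_y y_k m_{j,p}(y)$. The structural reason the result is true is that the gradient decomposition \eqref{02} rewrites the microscopic current as a discrete divergence, and taking one ``moment'' in the direction $k$ extracts precisely the entry $D_{j,k}$ of the diffusion matrix against the static susceptibility $\chi(\alpha)$, which is the content of Einstein's relation.
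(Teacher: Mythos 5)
Your proof is correct, and it reaches the conclusion by a somewhat different route than the paper. The paper tests the gradient identity \eqref{02} against a slowly varying inhomogeneous product measure $\nu^n_{u(\cdot)}$ with $u(0)=\alpha$, expands both sides to first order in $1/n$ via Lemma \ref{l19}, and matches the $O(1/n)$ coefficients; the arbitrariness of $(\nabla u)(0)$ then yields the identity $v_j\,\widetilde c_j(\alpha)=\sum_p\sum_k D_p(j,k)\,v_k\,\widetilde g'_{j,p}(\alpha)$ for all $v\in\R^d$. You instead pair \eqref{02} directly with $z_k\,\omega_z$ under the homogeneous measure $\nu_\alpha$ and sum over $z$; this is an exact finite computation with no remainder terms to control and no auxiliary profile $u$, which is a genuine (if modest) simplification. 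The algebraic content is the same in both arguments --- the independence of $c_j$ from $\eta_0,\eta_{e_j}$ localizes the left side to $z\in\{0,e_j\}$, translation invariance plus $\sum_y m_{j,p}(y)=0$ kills the zeroth moment on the right, and $\sum_y y_k\,m_{j,p}(y)=-D_p(j,k)$ together with \eqref{26} produces the diffusion coefficients --- since the paper's first-order coefficient extraction is precisely your first-moment computation in disguise. Your sign bookkeeping checks out ($-\delta_{j,k}\widetilde c_j(\alpha)$ on the left against $-\sum_p D_p(j,k)\widetilde g'_{j,p}(\alpha)$ on the right), and your remark that all sums are finite because the functions are cylinder functions is the only convergence point that needs saying; one should just note that the computation is carried out on $\{0,1\}^{\Z^d}$, where the weight $z_k$ is unambiguous, rather than on the discrete torus.
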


\begin{proof}
Fix $1\le j\le d$, $\alpha\in (0,1)$ and let $u: \bb T^d \to (0,1)$ be
a differentiable function such that $u(0)=\alpha$,
$(\partial_{x_j} u)(0) \not = 0$.  Take the expectation with respect to
$\nu^n_{u(\cdot)}$ on both sides of \eqref{02}. 

First, note that $E_{\nu_\alpha} [ \, c_j(\eta) \, [\eta_0 - \eta_{e_j}] \, ] = 0$
since $c_j$ does not depend on $\eta_0$ and $\eta_{e_j}$.
For the left-hand side,
by the proof of Lemma \ref{l19} and since $u(0)=\alpha$,
\begin{equation*}
E_{\nu^n_{u(\cdot)}} \big[ \, c_j(\eta) \, [\eta_0 - \eta_{e_j}] \, \big] \;=\;
\sum_z [u(z/n) - \alpha] \, E_{\nu_\alpha} \big[ \, 
c_j(\eta) \, [\eta_0 - \eta_{e_j}] \, \omega_z \, \big] \;+\; O(1/n^2)\;,
\end{equation*}
where $\omega_z = [\eta_z - \alpha]/\alpha(1-\alpha)$.
Since $c_j$ does not depend on $\eta_0$ and $\eta_{e_j}$, for $z\neq 0,e_j$,
\begin{align*}
E_{\nu_\alpha} \big[ \, c_j(\eta) \, [\eta_0 - \eta_{e_j}] \, \omega_z \, \big]
\;=\;0\;.
\end{align*}
As $u(0)= \alpha$, the sum in the penultimate line is equal to 
\begin{equation*}
[u(e_j/n) - \alpha] \,
E_{\nu_\alpha} \big[ \, c_j(\eta) \, [\eta_0 - \eta_{e_j}] \, \omega_{e_j}
\, \big] \;=\; -\, [u(e_j/n) - \alpha] \,
E_{\nu_\alpha} \big[ \, c_j(\eta) \, \big] \;.
\end{equation*}

We turn to the expectation of the right-hand side of \eqref{02}. 
By the proof of Lemma \ref{l19} and since $\sum_y m_{j,p}(y)=0$, the
first term in the expansion vanishes so that
\begin{align*}
& \sum_{p=1}^{n_j} \sum_{y\in\bb Z^d} m_{j,p}(y) \, E_{\nu^n_{u(\cdot)}}
\big[ \, \tau_y \, g_{j,p} \, \big] \\
&\qquad \;=\;
\sum_{p=1}^{n_j} \sum_{y\in\bb Z^d} m_{j,p}(y) \, 
\sum_z [u(z/n) - \alpha] \, 
E_{\nu_\alpha} \big[ \,  (\tau_y \, g_{j,p})  \, \omega_z \, \big] 
\;+\; O(1/n^2)\;.
\end{align*}
A change of variables $\eta \mapsto \tau_y \eta$ and a Taylor expansion
permit to rewrite the sum as
\begin{equation*}
\frac 1n \sum_{p=1}^{n_j} \sum_{y\in\bb Z^d} m_{j,p}(y) \, 
\sum_z (z+y) \cdot (\nabla u)(0) \, 
E_{\nu_\alpha} \big[ \,  g_{j,p} \, \omega_z \, \big] \;+\; O(1/n^2) \;.
\end{equation*}
Since $\sum_y m_{j,p}(y)=0$ and, by definition, $\sum_y y_k \,
m_{j,p}(y)= - \, D_p(j,k)$, the last expression is equal to
\begin{equation*}
-\, \frac {1}n \sum_{p=1}^{n_j} \big[ \, D_p(j, \cdot) \, \cdot (\nabla u)(0) \, \big]
\widetilde{g}'_{j,p}(\alpha) \;+\; O(1/n^2) \;.
\end{equation*}
\\
Putting together the previous estimates, we conclude that for every
$v\in \bb R^d$,
\begin{equation*}
v_j \, \widetilde{c}_j(\alpha) \;=\;
\sum_{p=1}^{n_j} \sum_k D_p(j, k) \, v_k \, 
\widetilde{g}'_{j,p}(\alpha) \;.
\end{equation*}
This completes the proof of the proposition.
\end{proof}

\section{The adjoint generator}
\label{sec4}

Fix a function $\varrho: \bb T^d_n \to (0,1)$.  Throughout this
section, $\nu_\varrho$ is a product measure on $\Omega_n$ with
marginals given by $E_{\nu_\varrho}[\eta(x)] = \varrho(x)$, $x\in\bb
T^d_n$. Recall that we denote by $\chi(\alpha)$ the static
compressibility, $\chi(\alpha) \,=\, \alpha \, [\, 1 - \alpha \,]$.

For each $q\ge0$, recall the definition of the set $\ms E_q$: 
$\ms E_q=\{A\subset \T_n^d: |A|=k\}$.
Denote by $\mb P^{(q)}_\varrho (\tau_x f)$ the projection
of the cylinder function $\tau_x f$ over the linear set of functions
of degree $q$:
\begin{equation*}
[\, \mb P^{(q)}_\varrho (\tau_x f)\,]\, (\eta)  \;=\; \sum_{A\in \ms E_q} 
\mf f\, (x,A)\; \omega_\varrho (A+x)\;.
\end{equation*}
In particular, $\mb P^{(0)}_\varrho (\tau_x f) =  E_{\nu_\varrho}[\,
\tau_x f \,]$. Let $\mb P^{(+q)}_\varrho = \sum_{p\ge q} \mb
P^{(p)}_\varrho$ so that  
\begin{equation*}
[\, \mb P^{(+q)}_\varrho (\tau_x f)\,] \, (\eta) 
\;=\; \sum_{p\ge q} \sum_{A\in \ms E_p} 
\mf f\, (x,A)\; \omega_\varrho (A+x)\;.
\end{equation*}
We represent $\mb P^{(+1)}_\varrho$ by $\mb P_\varrho$: 
\begin{equation*}
[\, \mb P_\varrho \, (\tau_x f)\, ]\, (\eta) \;=\; 
(\tau_x  f)\, (\eta) \,-\, E_{\nu_\varrho}[\, \tau_x f \,]\;.
\end{equation*}

The statement of Lemma \ref{APl01} requires some notation. Recall from
\eqref{13} that $D_j$ stands for the difference operator, and from
\eqref{12} that we denote by $j_{x,x+e_j}$ the instantaneous current
over the bond $(x,x+e_j)$.

For $1\le j\le d$, $1\le p\le n_j$, $x\in \bb T^d_n$, let
\begin{align}
\label{09}
A_j(x) \;=\; \frac{\chi (\varrho(x)) \,+\, \chi(\varrho(x+e_j))}
{\chi (\varrho(x)) \; \chi(\varrho(x+e_j))}\;, 
\end{align}
\begin{equation*}
B^{(1)}_{j,p} (x) \;=\;  \frac 12\, \sum_{y\in\T_n^d} m_{j,p}(y) \,
A_j(x-y)\, (D_j \varrho) (x-y) \; ,  
\end{equation*}
\begin{equation*}
E^{(1)}_j (x) \;=\;  \frac 12\, A_j(x)\, [\, (D_j \varrho) (x) \,]^2
\;, \quad F^{(1)}_j (x) \;=\; 
\frac {[\, D_j (\chi\circ \varrho)] (x) \, (D_j \varrho) (x)}
{2\, \chi(\varrho(x+e_j))}  \;, 
\end{equation*}
\begin{equation*}
G^{(1)}_j (x) \;=\; 
\frac {[\,D_j (\chi\circ \varrho)] (x) \, (D_j \varrho) (x)}
{2\, \chi(\varrho(x))}  \;\cdot 
\end{equation*}
Finally, for $A\subset \bb T_n^d$, let
\begin{equation}
\label{41}
H^{(1)}_j(\varrho, x, A) \; =\; E^{(1)}_j(x)\; \mf c_{j}(x,A)  
\;+\; \sum_{p=1}^{n_j} B^{(1)}_{j,p} (x) \; \mf g_{j,p}(x,A) \;+\;
J^{(1)}_j(x, A)\;,
\end{equation}
where
\begin{align*}
J^{(1)}_j(x, A) \; =\; & -\; \Upsilon_{\{0,e_j\}} (A)\; 
[\, (D_j \varrho)\, (x)\,]^2 \; \mf c_j(x,A\setminus \{0,e_j\}) \\
& +\; \Upsilon_{\{0\}} (A) \;
F^{(1)}_j \, (x) \; \mf c_j(x,A\setminus \{0\}) \\
& +\; \Upsilon_{\{e_j\}} (A) \;
G^{(1)}_j \, (x) \; \mf c_j(x,A\setminus \{e_j\}) \;.
\end{align*}
In this formula, $\mf c_{j}(x,A)$, $\mf g_{j,p} (x,A)$ represent the
Fourier coefficients, introduced in \eqref{04}, of the cylinder
functions $c_j$, $g_{j,p}$, respectively; and $\Upsilon_B$ stand for
the function introduced in \eqref{14}.

It follows from \eqref{05} that there exists $\ell\ge 1$ such that
$H^{(1)}_j(\varrho, x, A)=0$ if $A\not\subset \Lambda_\ell$. Note that
the functions of $x$ which appear in the previous formula either
contain the product of derivatives [this is the case of $E^{(1)}_j$,
$F^{(1)}_j$ and $G^{(1)}_j$] or a mean-zero sum of  discrete derivatives, which is
the case of $B^{(1)}_{j,p}$. This structure makes $n^2H_j^{(1)}(\varrho, x, A)$
bounded in $n$ if the reference density is good enough since these derivatives
absorb the speeded-up factor $n^2$.

\begin{lemma}
\label{APl01}
Denote by $L_{n, \nu_\varrho}^{S,*}$ the adjoint of $L_n^S$ in $L^2(\nu_\varrho)$.
Then,
\begin{align*}
L_{n, \nu_\varrho}^{S,*} \, {\bf 1} \;& =\; \sum_{j=1}^d
\sum_{x\in\T_n^d} \big\{ \, E_{\nu_\varrho} \big[\, j_{x-e_j,x}\big] \,-\,  
E_{\nu_\varrho} \big[\, j_{x,x+e_j}\big]  \, \big\}
\; \omega_\varrho (x) \\
& + \; \sum_{j=1}^d \sum_{A: |A|\ge 2}  \sum_{x\in\T_n^d} H^{(1)}_j(\varrho,
  x, A) \, \omega_\varrho (A+x)\;, 
\end{align*}
where the (finite) sum over $A$ is performed over finite subsets $A$
with at least two elements.
\end{lemma}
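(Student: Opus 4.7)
The plan is to first derive an explicit formula for $L_{n,\nu_\varrho}^{S,*}\mathbf{1}$ as a polynomial in the variables $\eta(x)$, and then to expand everything in the Fourier basis $\{\omega_\varrho(A+\cdot)\}$ in order to read off the degree-one and higher-degree contributions. Since $c_j$ is independent of $\eta_0$ and $\eta_{e_j}$, the jump rate $\tau_x c_j$ is invariant under the swap $\sigma^{x,x+e_j}$, so the standard change-of-variable argument in the definition of the adjoint yields
\begin{equation*}
L_{n,\nu_\varrho}^{S,*}\mathbf{1}(\eta)\;=\;\sum_{x,j}(\tau_x c_j)(\eta)\,\Big[\,\frac{\nu_\varrho(\sigma^{x,x+e_j}\eta)}{\nu_\varrho(\eta)}\,-\,1\,\Big]\;.
\end{equation*}
The ratio in brackets is nonzero only on the two exchange configurations $\eta_x\neq\eta_{x+e_j}$, and a direct computation with $\xi_y:=\eta_y-\varrho(y)=\chi(\varrho(y))\omega_\varrho(y)$ shows it equals $(D_j\varrho)(x)\,[\omega_\varrho(x)-\omega_\varrho(x+e_j)]-[(D_j\varrho)(x)]^2\,\omega_\varrho(x)\omega_\varrho(x+e_j)$.

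The next step is to split the first-order difference as
\begin{equation*}
\omega_\varrho(x)-\omega_\varrho(x+e_j)\;=\;\tfrac12 A_j(x)[\xi_x-\xi_{x+e_j}]\;+\;\tfrac12\,\frac{[D_j(\chi\circ\varrho)](x)}{\chi(\varrho(x))\,\chi(\varrho(x+e_j))}\,[\xi_x+\xi_{x+e_j}]\;,
\end{equation*}
so that the antisymmetric piece can be coupled to the gradient structure via $(\tau_x c_j)[\xi_x-\xi_{x+e_j}]=j_{x,x+e_j}+(\tau_x c_j)(D_j\varrho)(x)$. Invoking \eqref{02} to write $j_{x,x+e_j}=\sum_{p,y}m_{j,p}(y)\,\tau_{x+y}g_{j,p}$ and changing variables $z=x+y$ generates the prefactor $B^{(1)}_{j,p}(z)$ multiplying $\tau_z g_{j,p}(\eta)$, while the leftover $(D_j\varrho)$-term and the symmetric piece produce the prefactors $E^{(1)}_j(x)$, $F^{(1)}_j(x)$, $G^{(1)}_j(x)$ multiplying $\tau_x c_j$, $(\tau_x c_j)\omega_\varrho(x)$ and $(\tau_x c_j)\omega_\varrho(x+e_j)$ respectively.

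Finally, I would expand each $\tau_x c_j$ and $\tau_z g_{j,p}$ in the orthogonal Fourier basis and collect terms by degree. Because $\mf c_j(x,A)=0$ whenever $0\in A$ or $e_j\in A$, the products $\omega_\varrho(A+x)\,\omega_\varrho(x)$, $\omega_\varrho(A+x)\,\omega_\varrho(x+e_j)$ and $\omega_\varrho(A+x)\,\omega_\varrho(\{x,x+e_j\})$ never produce a square $\omega_\varrho(y)^2$ and simplify cleanly to $\omega_\varrho(A'+x)$ with $A'=A\cup\{0\}$, $A\cup\{e_j\}$, $A\cup\{0,e_j\}$ respectively; this yields the indicators $\Upsilon_{\{0\}}$, $\Upsilon_{\{e_j\}}$, $\Upsilon_{\{0,e_j\}}$ in $J^{(1)}_j$. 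The degree-one contribution reduces, after reindexing the $\omega_\varrho(x+e_j)$ sum by $x\mapsto x-e_j$ and using $E_{\nu_\varrho}[j_{x,x+e_j}]=-\mf c_j(x,\emptyset)(D_j\varrho)(x)$ (immediate from independence of $\tau_x c_j$ and $\eta_x-\eta_{x+e_j}$ under $\nu_\varrho$), to the announced coefficient $E_{\nu_\varrho}[j_{x-e_j,x}]-E_{\nu_\varrho}[j_{x,x+e_j}]$. The argument is conceptually routine; the main obstacle is the combinatorial bookkeeping of which subsets $A$ contribute at each degree, which is controlled precisely by the hypothesis that $c_j$ ignores $\eta_0$ and $\eta_{e_j}$, preventing any $\omega_\varrho(y)^2$ factors from appearing.
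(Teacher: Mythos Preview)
Your proposal is correct and follows essentially the same route as the paper. The paper organizes the computation into Lemmata~\ref{A4}, \ref{A5}, \ref{A1}, \ref{A2} together with identity~\eqref{08}: it computes the ratio $J_{x,x+e_j}-1$ exactly as you do, performs the same symmetric/antisymmetric splitting of $\omega_\varrho(x)-\omega_\varrho(x+e_j)$ via $A_j(x)$, couples the antisymmetric piece to the current and invokes the gradient condition~\eqref{02}, and then reads off the Fourier coefficients. The one organizational difference is that the paper separates the degree-one part at the outset by writing $\tau_x c_j = E_{\nu_\varrho}[\tau_x c_j] + \mb P_\varrho(\tau_x c_j)$ (Lemma~\ref{A5}) and then, in Lemma~\ref{A1}, uses the observation that $\mb P_\varrho(\tau_x c_j)\,[\omega_\varrho(x)-\omega_\varrho(x+e_j)]$ already has degree $\ge 2$ to insert the projection $\mb P^{(+2)}_\varrho$ and kill the spurious degree-one terms that the $A_j$-splitting of the current would otherwise generate; this saves you from having to check by hand that the degree-one pieces coming from $B^{(1)}_{j,p}\,\mf g_{j,p}(z,\{a\})$ and $E^{(1)}_j\,\mf c_j(x,\{a\})$ cancel against the $F^{(1)}_j$, $G^{(1)}_j$ contributions. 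Your final paragraph computes the degree-one part directly from the pre-split formula, which is equivalent and equally valid.
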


Note that the first term on the right-hand side contains only terms of
degree $1$, while the second one only terms of degree $2$ or higher. 

The proof of this lemma is divided in four Lemmata and one
identity, presented in \eqref{08}. We first compute the adjoint
$L_{n,\nu_\varrho}^{S,*}$ of $L^S_n$.

\begin{lemma}
\label{A4}
For $x\in\T_n^d$ and $1\le j\le d$, let
\begin{equation*}
J_{x,x+e_j}(\eta) \;=\; \frac{\nu_\varrho (\sigma^{x,x+e_j}\eta)}
{\nu_\varrho (\eta)}\;\cdot
\end{equation*} 
Then, for any $f\in L^2(\nu_\varrho)$,
\begin{align*}
(\, L_{n,\nu_\varrho}^{S,*} \, f\,)\, (\eta) \; &=\;
\sum_{x\in\T_n^d} \sum_{j=1}^d c_j (\tau_x \eta)\, J_{x,x+e_j}(\eta) \, 
\{\, f(\sigma^{x,x+e_j}\eta) \,-\, f(\eta)\, \} \\
&+\; \sum_{x\in\T_n^d} \sum_{j=1}^d c_j (\tau_x \eta)\, \{\, J_{x,x+e_j}(\eta) 
\,-\, 1\,\} \,  f(\eta)  \;.
\end{align*}
\end{lemma}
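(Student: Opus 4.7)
The plan is to derive the formula by the standard duality computation: for any $f,g\in L^2(\nu_\varrho)$, expand $\int f\,(L_n^S g)\,d\nu_\varrho$ bond by bond, perform a change of variables $\eta\mapsto \sigma^{x,x+e_j}\eta$ in the ``flip'' term, and match the resulting expression against $\int (L_{n,\nu_\varrho}^{S,*}f)\, g\, d\nu_\varrho$.

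Fix $x\in\T_n^d$ and $1\le j\le d$. The contribution of the bond $(x,x+e_j)$ to $\int f\,(L_n^S g)\,d\nu_\varrho$ is
\begin{equation*}
\int c_j(\tau_x\eta)\, f(\eta)\,\bigl\{g(\sigma^{x,x+e_j}\eta)-g(\eta)\bigr\}\,\nu_\varrho(d\eta).
\end{equation*}
In the first summand I would apply the involution $\eta\mapsto\sigma^{x,x+e_j}\eta$; this introduces the Radon--Nikodym factor $J_{x,x+e_j}(\eta)=\nu_\varrho(\sigma^{x,x+e_j}\eta)/\nu_\varrho(\eta)$. Crucially, because $c_j$ is assumed not to depend on $\eta_0$ or $\eta_{e_j}$, one has the invariance
\begin{equation*}
c_j\bigl(\tau_x\sigma^{x,x+e_j}\eta\bigr)\;=\;c_j(\tau_x\eta),
\end{equation*}
so the rate is untouched by the change of variables. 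Collecting the two terms, the bond contribution becomes
\begin{equation*}
\int c_j(\tau_x\eta)\,\bigl\{J_{x,x+e_j}(\eta)\,f(\sigma^{x,x+e_j}\eta)-f(\eta)\bigr\}\,g(\eta)\,\nu_\varrho(d\eta).
\end{equation*}

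Summing over $x$ and $j$ and identifying this with $\int (L_{n,\nu_\varrho}^{S,*}f)\,g\,d\nu_\varrho$ for arbitrary $g$, one reads off
\begin{equation*}
(L_{n,\nu_\varrho}^{S,*}f)(\eta)\;=\;\sum_{x,j} c_j(\tau_x\eta)\,\bigl\{J_{x,x+e_j}(\eta)\,f(\sigma^{x,x+e_j}\eta)-f(\eta)\bigr\}.
\end{equation*}
To put this in the stated form, I would add and subtract $c_j(\tau_x\eta)J_{x,x+e_j}(\eta)f(\eta)$ inside each summand, which splits the expression into a ``tilted exclusion generator'' piece $c_j(\tau_x\eta)J_{x,x+e_j}(\eta)[f(\sigma^{x,x+e_j}\eta)-f(\eta)]$ and a multiplicative ``drift'' piece $c_j(\tau_x\eta)[J_{x,x+e_j}(\eta)-1]f(\eta)$. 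This is exactly the decomposition in the statement.

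There is no real obstacle here: the argument is a routine change of variables, and the only subtlety worth emphasizing is the use of the hypothesis that $c_j$ does not depend on $\eta_0,\eta_{e_j}$, which is what allows the rate $c_j(\tau_x\eta)$ to be factored out cleanly (without it, the change of variables would also transform the rate, producing an extra factor that would prevent the neat splitting into the two displayed sums). The factor $J_{x,x+e_j}$ is finite and positive since $\varrho(\cdot)$ takes values in $(0,1)$, so no integrability issue arises.
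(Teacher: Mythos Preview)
Your argument is correct and is precisely the elementary duality computation one expects; the paper itself omits the proof, stating only that it ``is elementary and left to the reader.'' The one point worth highlighting, which you do, is the use of the hypothesis that $c_j$ does not depend on $\eta_0,\eta_{e_j}$ to ensure $c_j(\tau_x\sigma^{x,x+e_j}\eta)=c_j(\tau_x\eta)$ under the change of variables.
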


The proof of this lemma is elementary and left to the reader.

\begin{lemma}
\label{A5}
We have that
\begin{align*}
(L_{n,\nu_\varrho}^{S,*} \, {\bf 1})(\eta) \;
& =\; \sum_{j=1}^d \sum_{x\in\T_n^d} 
\Big\{ \, E_{\nu_\varrho} \big[\, j_{x-e_j,x} \, \big] \,-\,  
E_{\nu_\varrho} \big[\, j_{x,x+e_j} \,  \big]  \, \Big\}
\, \omega_\varrho (x)  \\
& +\; \sum_{x\in\T_n^d} \sum_{j=1}^d (\mb P_\varrho \tau_x c_j) (\eta)\;
(D_j \varrho)\, (x)
\; [\, \omega_\varrho (x) \; -\; \omega_\varrho (x+e_j)\,] \\
& -\; \sum_{x\in\T_n^d} \sum_{j=1}^d c_j (\tau_x \eta)\, 
[\, (D_j \varrho) \, (x) \,]^2 \, \omega_\varrho (x) \, 
\omega_\varrho (x+e_j) \;.
\end{align*}
\end{lemma}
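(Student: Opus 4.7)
The plan is to apply Lemma \ref{A4} to the constant function $f\equiv {\bf 1}$. Since ${\bf 1}(\sigma^{x,x+e_j}\eta)-{\bf 1}(\eta) = 0$, the first double sum in Lemma \ref{A4} vanishes and we are left with
\begin{equation*}
(L_{n,\nu_\varrho}^{S,*}\, {\bf 1})(\eta) \;=\; \sum_{x\in\T_n^d}\sum_{j=1}^d c_j(\tau_x\eta)\,\big[J_{x,x+e_j}(\eta)-1\big]\;.
\end{equation*}
The whole task then reduces to expanding $J_{x,x+e_j}(\eta)-1$ in the orthogonal variables $\omega_\varrho(z)=[\eta_z-\varrho(z)]/\chi(\varrho(z))$.

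Because $\nu_\varrho$ is a product measure with marginals $\varrho(\cdot)$, a case analysis on the four possible values of $(\eta_x,\eta_{x+e_j})$ shows that $J_{x,x+e_j}-1$ vanishes when $\eta_x=\eta_{x+e_j}$ and, otherwise, is proportional to $D_j\varrho(x)$. Collecting the two nontrivial cases gives
\begin{equation*}
J_{x,x+e_j}(\eta)-1 \;=\; D_j\varrho(x)\,\Big[\,\frac{\eta_x\,[1-\eta_{x+e_j}]}{\varrho(x)\,[1-\varrho(x+e_j)]} \,-\, \frac{[1-\eta_x]\,\eta_{x+e_j}}{[1-\varrho(x)]\,\varrho(x+e_j)}\,\Big]\;.
\end{equation*}
Using $\eta_z=\varrho(z)+\chi(\varrho(z))\,\omega_\varrho(z)$ together with the identities $\chi(\varrho)/\varrho = 1-\varrho$ and $\chi(\varrho)/(1-\varrho) = \varrho$, a direct expansion of the bracket collapses to
\begin{equation*}
\omega_\varrho(x) - \omega_\varrho(x+e_j) \;-\; D_j\varrho(x)\,\omega_\varrho(x)\,\omega_\varrho(x+e_j)\;,
\end{equation*}
the linear-in-$\omega$ coefficients summing to $\pm 1$ and the quadratic coefficient reducing to $\varrho(x)-\varrho(x+e_j)$.

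Multiplying by $c_j(\tau_x\eta)$ and summing, the quadratic piece produces exactly the third sum of the statement, with the correct sign $-c_j(\tau_x\eta)[D_j\varrho(x)]^2\,\omega_\varrho(x)\omega_\varrho(x+e_j)$. For the linear piece, the plan is to split the rate as $c_j(\tau_x\eta) = E_{\nu_\varrho}[\tau_x c_j] + (\mb P_\varrho\tau_x c_j)(\eta)$: the $\mb P_\varrho$-contribution paired with $D_j\varrho(x)[\omega_\varrho(x)-\omega_\varrho(x+e_j)]$ is the middle sum of the statement. For the expectation piece, since $c_j$ is independent of $\eta_0,\eta_{e_j}$ one has $E_{\nu_\varrho}[j_{x,x+e_j}] = -E_{\nu_\varrho}[\tau_x c_j]\,D_j\varrho(x)$; after the shift $x\mapsto x-e_j$ in the $\omega_\varrho(x+e_j)$ term, the resulting telescoping expression
\begin{equation*}
\sum_{x,j}\big\{E_{\nu_\varrho}[\tau_x c_j]\,D_j\varrho(x) - E_{\nu_\varrho}[\tau_{x-e_j} c_j]\,D_j\varrho(x-e_j)\big\}\,\omega_\varrho(x)
\end{equation*}
is precisely $\sum_{x,j}\{E_{\nu_\varrho}[j_{x-e_j,x}]-E_{\nu_\varrho}[j_{x,x+e_j}]\}\,\omega_\varrho(x)$, which is the first sum.

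The only real obstacle is bookkeeping: one must carry the signs carefully through the case analysis for $J_{x,x+e_j}-1$ and through the index shift that identifies the discrete divergence of the current. Beyond Lemma \ref{A4} and these elementary algebraic manipulations, no further input is needed.
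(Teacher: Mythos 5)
Your proposal is correct and follows essentially the same route as the paper: apply Lemma \ref{A4} to $f\equiv{\bf 1}$, expand $J_{x,x+e_j}-1$ into $(D_j\varrho)(x)\,[\omega_\varrho(x)-\omega_\varrho(x+e_j)-(D_j\varrho)(x)\,\omega_\varrho(x)\omega_\varrho(x+e_j)]$, split $c_j(\tau_x\eta)$ into its mean plus $\mb P_\varrho\tau_x c_j$, identify $E_{\nu_\varrho}[\tau_x c_j]\,(D_j\varrho)(x)=-E_{\nu_\varrho}[j_{x,x+e_j}]$ via the independence of $c_j$ from $\eta_0,\eta_{e_j}$, and sum by parts. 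All the sign bookkeeping checks out.
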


\begin{proof}
By Lemma \ref{A4}, 
\begin{equation*}
L_{n,\nu_\varrho}^{S,*} \, {\bf 1}\;=\; 
\sum_{x\in\T_n^d} \sum_{j=1}^d c_j (\tau_x \eta)\, \{\, J_{x,x+e_j}(\eta) 
\,-\, 1\,\}  \;.
\end{equation*}
The definition of $J_{x,x+e_j}$ and a straightforward computation
yield that this expression is equal to 
\begin{equation*}
\sum_{x\in\T_n^d} \sum_{j=1}^d c_j (\tau_x \eta)\, 
(D_j \varrho)\, (x)\,
\Big \{\, \frac{\eta_x \, (1-\eta_{x+e_j})}
{\varrho (x) \, [1-\varrho (x+e_j)]} \;-\;
\frac{\eta_{x+e_j} \, (1-\eta_x)}
{\varrho (x+e_j) \, [1-\varrho (x)]} \,\Big\}  \;.
\end{equation*}
Recall that
$\omega_\varrho (x) = [\eta(x) - \varrho(x)]/\chi(\varrho(x))$. The
expression inside braces can be written as
\begin{align*}
\omega_\varrho (x) \; -\; \omega_\varrho (x+e_j) \; -\;
(D_j \varrho) \, (x) \, \omega_\varrho (x) \, 
\omega_\varrho (x+e_j) \;.
\end{align*}
Therefore, 
\begin{align*}
(L_{n,\nu_\varrho}^{S,*} \, {\bf 1})(\eta) \;
& =\; \sum_{x\in\T_n^d} \sum_{j=1}^d  E_{\nu_\varrho} [\,c_j (\tau_x \eta)\,]\;
(D_j \varrho)\, (x)
\; [\, \omega_\varrho (x) \; -\; \omega_\varrho (x+e_j)\,] \\
& +\; \sum_{x\in\T_n^d} \sum_{j=1}^d (\mb P_\varrho \tau_x c_j) (\eta)\;
(D_j \varrho)\, (x)
\; [\, \omega_\varrho (x) \; -\; \omega_\varrho (x+e_j)\,] \\
& -\; \sum_{x\in\T_n^d} \sum_{j=1}^d c_j (\tau_x \eta)\, 
[\, (D_j \varrho)\, (x)\,]^2 \, \omega_\varrho (x) \, 
\omega_\varrho (x+e_j) \;.
\end{align*}
Note that the second and third lines contain only terms of degree $2$
or more, while the first line have only terms of degree $1$.

Since $c_j$ does not depend on $\eta(0)$ and $\eta(e_j)$, by
definition of the instantaneous current $j_{x,x+e_j}$,
\begin{equation*}
E_{\nu_\varrho} [\,c_j (\tau_x \eta)\,]\;
(D_j \varrho)\, (x)  \;= \; -\, 
E_{\nu_\varrho} \big[\,c_j (\tau_x \eta)\, 
[ \, \eta(x) \,-\, \eta(x+e_j)\,]  \,  \big]\;= \; -\, 
E_{\nu_\varrho} \big[\, j_{x,x+e_j} \,  \big]\;.
\end{equation*}
To complete the proof, it remains to insert this expression in the
first line of the formula for
$(L_{n,\nu_\varrho}^{S,*} \, {\bf 1})(\eta)$ and to sum by parts.
\end{proof}

In view of \eqref{03}, the third term of Lemma \ref{A5} can
be written as
\begin{equation*}
-\; \sum_{j=1}^d \sum_{x\in\T_n^d}  \sum_A 
[\, (D_j \varrho)\, (x)\,]^2 \, \mf c_j(x,A) \, \omega_\varrho (A+x)
\, \omega_\varrho (x) \, 
\omega_\varrho (x+e_j) \;,
\end{equation*}
where $\mf c_j(x,A)$ stands for the Fourier coefficients of
$\tau_x c_j$, given by \eqref{04}. As $c_j$ does not depend on
$\eta(0)$ and $\eta(e_j)$, $\mf c_j(x,A)=0$ if $A$ contains $0$ or
$e_j$. We may therefore restrict the sum to sets which do not contain
these points and rewrite the previous expression as
\begin{equation}
\label{08}
\begin{aligned}
& -\; \sum_{j=1}^d \sum_{x\in\T_n^d}  \sum_{A: A \cap\{0,e_j\}= \varnothing}
[\, (D_j \varrho)\, (x)\,]^2 \, \mf c_j(x,A) \, 
\omega_\varrho (\, [\, A \cup \{0,e_j\}\, ]+x\, ) \\
&\quad =\; 
-\; \sum_{j=1}^d \sum_{x\in\T_n^d}  \sum_{A: A \supset \{0,e_j\}}
[\, (D_j \varrho)\, (x)\,]^2 \, \mf c_j(x,A\setminus \{0,e_j\}) \, 
\omega_\varrho (\, A + x\, )\;.
\end{aligned}
\end{equation}

We turn to the second term of Lemma \ref{A5}. 

\begin{lemma}
\label{A1}
For each $1\le j\le d$,
\begin{align}
\label{06}
& \sum_{x\in\T_n^d} [\, \mb P_\varrho \, (\tau_x \, c_j)\,]\,  (\eta)\;
(D_j \varrho) (x)
\; [\, \omega_\varrho (x) \; -\; \omega_\varrho (x+e_j)\,] \\
& \quad =\; 
\frac 12\, \sum_{x\in\T_n^d} 
[\, \mb P^{(+2)}_\varrho \,(\tau_x \, j_{0,e_j}) \, ] \, (\eta)\;
A_j(x)\, (D_j \varrho) (x)  \nonumber\\
& \quad +\, \frac 12\, \sum_{x\in\T_n^d} 
[\, \mb P^{(+2)}_\varrho (\tau_x \, c_j ) \, ]\,  (\eta)\; 
A_j(x)\, [\, (D_j \varrho) (x) \,]^2  \nonumber \\
& \quad +\; \frac 12\, \sum_{x\in\T_n^d} 
[\, \mb P_\varrho (\tau_x \, c_j)\,]\,  (\eta)\; 
\Big\{ \frac {\omega(x)}{\chi(\varrho(x+e_j))}  
\,+\, \frac{\omega (x+e_j)}{\chi (\varrho(x))} \,\Big\}\, 
[D_j (\chi\circ \varrho)] (x) \, (D_j \varrho) (x)\;.
\nonumber
\end{align}
\end{lemma}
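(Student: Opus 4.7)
The identity to be proved is purely algebraic: both sides are polynomials in the $\eta_z$'s, and one only needs to write $\omega_\varrho(x)-\omega_\varrho(x+e_j)$ in a judicious way and then rewrite the resulting monomials in the orthogonal basis $\{\omega_\varrho(A):A\subset\T_n^d\}$. The plan is therefore to first produce a three-term decomposition of $\omega_\varrho(x)-\omega_\varrho(x+e_j)$, substitute it into the left-hand side of \eqref{06}, and finally re-express the $\mb P_\varrho$ projections that appear as $\mb P^{(+2)}_\varrho$ projections by isolating the degree-one pieces of $\tau_x j_{0,e_j}$.

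For the first step I would write $\chi(\varrho(x))^{-1}=\tfrac{1}{2}A_j(x)+\de_x$ and $\chi(\varrho(x+e_j))^{-1}=\tfrac{1}{2}A_j(x)-\de_x$ with
$\de_x=\tfrac{1}{2}[\chi(\varrho(x))^{-1}-\chi(\varrho(x+e_j))^{-1}]
= D_j(\chi\circ\varrho)(x)/[2\chi(\varrho(x))\chi(\varrho(x+e_j))]$.
Substituting into the definition of $\om_\varrho$ and collecting symmetric and antisymmetric parts gives
\begin{equation*}
\om_\varrho(x)-\om_\varrho(x+e_j) \;=\;
\tfrac{1}{2} A_j(x)\,\{\eta_x-\eta_{x+e_j}+(D_j\varrho)(x)\}
\;+\; \de_x\,\{\chi(\varrho(x))\om_\varrho(x)+\chi(\varrho(x+e_j))\om_\varrho(x+e_j)\}.
\end{equation*}
Multiplying by $\mb P_\varrho(\tau_x c_j)(D_j\varrho)(x)$ and summing in $x$ splits the left-hand side of \eqref{06} into three pieces. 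The contribution of the $\de_x$ term produces exactly the third line of \eqref{06}, with no further work needed, while the $(D_j\varrho)(x)$ summand inside the braces gives a term $\tfrac{1}{2}\sum_x \mb P_\varrho(\tau_x c_j)\,A_j(x)\,[(D_j\varrho)(x)]^2$, i.e.\ almost the second line but with $\mb P_\varrho$ in place of $\mb P^{(+2)}_\varrho$.

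The remaining piece is $\tfrac{1}{2}\sum_x \mb P_\varrho(\tau_x c_j)\,A_j(x)\,(D_j\varrho)(x)\,(\eta_x-\eta_{x+e_j})$, and converting it into the claimed $\mb P^{(+2)}_\varrho(\tau_x j_{0,e_j})$ expression is the main technical step. Here I would use that $\tau_x j_{0,e_j}=\tau_x c_j\,(\eta_x-\eta_{x+e_j})$ and that $\tau_x c_j$ is independent of $\eta_x,\eta_{x+e_j}$; writing $\tau_x c_j = E_{\nu_\varrho}[\tau_x c_j]+\mb P_\varrho(\tau_x c_j)$ and using $\eta_x-\eta_{x+e_j}=\chi(\varrho(x))\om_\varrho(x)-\chi(\varrho(x+e_j))\om_\varrho(x+e_j)-(D_j\varrho)(x)$, one identifies the degree-$0$ and degree-$1$ projections of $\tau_x j_{0,e_j}$, obtaining
\begin{equation*}
\mb P_\varrho(\tau_x c_j)(\eta_x-\eta_{x+e_j})
\;=\; \mb P^{(+2)}_\varrho(\tau_x j_{0,e_j}) \;-\; (D_j\varrho)(x)\,\mb P^{(1)}_\varrho(\tau_x c_j).
\end{equation*}
Plugging this back yields the first line of \eqref{06} together with an extra term $-\tfrac{1}{2}\sum_x A_j(x)[(D_j\varrho)(x)]^2\,\mb P^{(1)}_\varrho(\tau_x c_j)$, which combines with the $\mb P_\varrho(\tau_x c_j)$ factor in the previous paragraph via $\mb P_\varrho-\mb P^{(1)}_\varrho=\mb P^{(+2)}_\varrho$ to produce the second line of \eqref{06}.

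The main obstacle is precisely this last bookkeeping step: one has to track the degree-one components of $\tau_x j_{0,e_j}$ carefully, since the difference between $\mb P_\varrho$ and $\mb P^{(+2)}_\varrho$ generates exactly the correction that is needed to make the second line emerge with the correct projection. Everything else is a direct computation using the orthogonality of $\{\om_\varrho(A)\}$ in $L^2(\nu_\varrho)$ and the fact that $c_j$ does not depend on $\eta_0,\eta_{e_j}$.
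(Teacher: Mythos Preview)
Your proposal is correct and follows essentially the same route as the paper: the same symmetric/antisymmetric splitting of $\omega_\varrho(x)-\omega_\varrho(x+e_j)$ (your $A_j,\de_x$ decomposition is exactly the paper's display \eqref{50}), and the same use of $\tau_x j_{0,e_j}=(\tau_x c_j)(\eta_x-\eta_{x+e_j})$ together with the independence of $c_j$ from $\eta_0,\eta_{e_j}$. The only stylistic difference is that the paper, instead of computing $\mb P^{(+2)}_\varrho(\tau_x j_{0,e_j})$ directly and cancelling the degree-one piece by hand, observes once and for all that the entire left-hand side of \eqref{06} has vanishing degree-zero and degree-one components (its $\nu_\varrho$-mean and its covariances with each $\xi_\varrho(z)$ are zero), so one may apply $\mb P^{(+2)}_\varrho$ globally to the intermediate four-term expression; this kills the extraneous degree-one term and upgrades $\mb P_\varrho$ to $\mb P^{(+2)}_\varrho$ in one stroke, avoiding your explicit bookkeeping with $\mb P^{(1)}_\varrho(\tau_x c_j)$.
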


\begin{proof}
Recall the definition of $\xi_\varrho(x): \xi_\varrho(x)=\eta(x)-\varrho(x), x\in\T_n^d$.
Fix $j$ and write $\omega_\varrho (x) \; -\; \omega_\varrho
(x+e_j)$ as
\begin{align}\label{50}
& \frac 1{2\,\chi (\varrho(x)) \, \chi(\varrho(x+e_j))}\, 
[\, \xi_\varrho(x) \,-\, \xi_\varrho(x+e_j)\,]\, 
[\, \chi(\varrho(x+e_j)) \,+\, \chi(\varrho(x)) \,] \\
&\quad +\; \frac 1{2\,\chi (\varrho(x)) \, \chi(\varrho(x+e_j))}\, 
[\, \xi_\varrho(x) \,+\, \xi_\varrho (x+e_j)\,]\, 
[\, \chi(\varrho(x+e_j)) \,-\, \chi(\varrho(x)) \,]\;.\notag
\end{align}
On the other hand, taking the operator $\mb P_\varrho\circ \tau_x$
for \eqref{12}, one can obtain
\begin{align}\label{51}
[\, \mb P_\varrho \,(\tau_x \, j_{0,e_j})\,]\, (\eta) \;&=\; 
E_{\nu_\varrho}[\, \tau_x c_j\, ] \; [\, \xi_\varrho(x) \,-\, \xi_\varrho(x+e_j)\,]\\
& \quad +\,  [\,\mb P_\varrho\, (\tau_x\,  c_j )\,]\,  (\eta) \, [\, \xi_\varrho(x) \,-\, \xi_\varrho(x+e_j)\,] \notag \\
& \qquad-\,  [\,\mb P_\varrho\, (\tau_x\,  c_j )\,]\,  (\eta) \, (D_j\varrho)(x)\;. \notag
\end{align}

From \eqref{50} and \eqref{51}, the left-hand side of \eqref{06} becomes
\begin{align*}
& \frac 12\, \sum_{x\in\T_n^d} [\,
\mb P_\varrho \,(\tau_x \, j_{0,e_j})\,]\, (\eta)\;
A_j(x)\, (D_j \varrho) (x)  \\
& \quad -\, \frac 12\, \sum_{x\in\T_n^d} 
E_{\nu_\varrho}[\, \tau_x c_j\, ] \; [\, \xi_\varrho(x) \,-\, \xi_\varrho(x+e_j)\,]\,
A_j(x)\, (D_j\,  \varrho) (x)  \\
& \quad +\, \frac 12\, \sum_{x\in\T_n^d} 
[\,\mb P_\varrho\, (\tau_x\,  c_j )\,]\,  (\eta)\; 
A_j(x)\, [\, (D_j \varrho) (x) \,]^2  \;+\; L_3 \;,
\end{align*}
where $L_3$ is the last term appearing on the right-hand side of
\eqref{06} and $A_j(x)$ has been introduced in \eqref{09}.

Since $c_j$ does not depend on $\eta(0)$, $\eta(e_j)$, the expectation
with respect to $\nu_\varrho$ of the left-had side of \eqref{06}
vanishes. It is also clear that the covariance of this sum with
respect to $\xi_\varrho(z)$ vanishes for all $z\in \bb T^d_n$. We may
therefore introduce the operator $\mb P^{(+2)}_\varrho$ in front of the
sum. By doing so, the second sum of the previous formula vanishes
because it contains only terms of degree $1$. This completes the proof
of the lemma.
\end{proof}

We further express the sums on the right-hand side of \eqref{06} in
terms of the Fourier coefficients of the cylinder functions.  Recall
the notation introduced in \eqref{09} and below.

\begin{lemma}
\label{A2}
For each $1\le j\le d$,
\begin{align}
\label{10}
& \sum_{x\in\T_n^d} [\, \mb P_\varrho \, (\tau_x \, c_j)\,]\,  (\eta)\;
(D_j \varrho) (x)
\; [\, \omega_\varrho (x) \; -\; \omega_\varrho (x+e_j)\,] \\
& \quad =\; 
\sum_{A: |A|\ge 2} \sum_{x\in\T_n^d}  \sum_{p=1}^{n_j} 
B^{(1)}_{j,p} (x) \, \mf g_{j,p}(x,A) \, 
\omega_\varrho (\, A + x\, )  \nonumber \\
& \quad +\,  \sum_{A: |A|\ge 2} \sum_{x\in\T_n^d} 
E^{(1)}_j(x)\, \mf c_{j}(x,A) \, \omega_\varrho (\, A + x\, ) \; 
 \nonumber \\
& \quad +\; \sum_{\substack{A: |A|\ge 2 \\ A \ni 0}} \sum_{x\in\T_n^d}
  F^{(1)}_j (x)\,
\mf c_{j}(x,A\setminus \{0\}) \, \omega_\varrho (\, A + x\, ) 
\nonumber \\
& \quad +\; \sum_{\substack{A: |A|\ge 2 \\ A \ni e_j}}
  \sum_{x\in\T_n^d} G^{(1)}_j (x)\,
\mf c_{j}(x,A\setminus \{e_j\}) \, \omega_\varrho (\, A + x\, ) \;.
\nonumber
\end{align}
\end{lemma}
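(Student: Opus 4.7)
The plan is to take the identity \eqref{06} of Lemma \ref{A1} as the starting point and expand each of its three summands in the Fourier basis $\{\omega_\varrho(A+x)\}_{A\subset\T^d_n}$ associated with $\nu_\varrho$, matching the resulting coefficients to those appearing on the right-hand side of \eqref{10}.

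For the first summand $\frac 12\sum_x [\mb P^{(+2)}_\varrho\tau_x j_{0,e_j}](\eta)\,A_j(x)(D_j\varrho)(x)$, I would invoke the gradient decomposition \eqref{02} together with the definition \eqref{12} of the current to write $\tau_x j_{0,e_j}=\sum_{p=1}^{n_j}\sum_{y\in\Z^d} m_{j,p}(y)\,\tau_{x+y}g_{j,p}$. The Fourier expansion $\mb P^{(+2)}_\varrho(\tau_{x+y}g_{j,p})=\sum_{|A|\ge 2}\mf g_{j,p}(x+y,A)\,\omega_\varrho(A+x+y)$, followed by interchanging the sums over $x,y,p,A$ and performing the substitution $x\mapsto x-y$, identifies the coefficient of $\mf g_{j,p}(x,A)\,\omega_\varrho(A+x)$ as precisely $\tfrac12\sum_y m_{j,p}(y)\,A_j(x-y)(D_j\varrho)(x-y)=B^{(1)}_{j,p}(x)$. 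The second summand is immediate: a direct Fourier expansion gives $\sum_{|A|\ge 2}E^{(1)}_j(x)\,\mf c_j(x,A)\,\omega_\varrho(A+x)$ since $E^{(1)}_j(x)=\tfrac12 A_j(x)[(D_j\varrho)(x)]^2$.

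For the third summand, I would expand $\mb P_\varrho(\tau_x c_j)=\sum_{|B|\ge 1}\mf c_j(x,B)\,\omega_\varrho(B+x)$ and exploit the standing assumption that $c_j$ depends on neither $\eta_0$ nor $\eta_{e_j}$, which by \eqref{05} forces $\mf c_j(x,B)=0$ whenever $B$ meets $\{0,e_j\}$. This disjointness yields the clean identifications $\omega_\varrho(B+x)\,\omega_\varrho(x)=\omega_\varrho((B\cup\{0\})+x)$ and $\omega_\varrho(B+x)\,\omega_\varrho(x+e_j)=\omega_\varrho((B\cup\{e_j\})+x)$. Reindexing via $A=B\cup\{0\}$ and $A=B\cup\{e_j\}$, and observing that $|B|\ge 1$ forces $|A|\ge 2$, produces the last two sums of \eqref{10} with coefficients $F^{(1)}_j(x)\,\mf c_j(x,A\setminus\{0\})$ and $G^{(1)}_j(x)\,\mf c_j(x,A\setminus\{e_j\})$. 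The main obstacle is purely one of bookkeeping: the change of variable $x\mapsto x-y$ in the first step must be executed carefully so that the $y$-dependence migrates from $\mf g_{j,p}$ onto $A_j(D_j\varrho)$, manufacturing $B^{(1)}_{j,p}$; and in the third step one must verify that the support restriction on $c_j$ is exactly what legitimizes absorbing the extra factors $\omega_\varrho(x)$ and $\omega_\varrho(x+e_j)$ into genuine basis elements without double-counting or producing cross-terms. Once this bookkeeping is settled, summing the three contributions gives \eqref{10}.
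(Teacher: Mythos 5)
Your proposal is correct and follows essentially the same route as the paper: start from identity \eqref{06} of Lemma \ref{A1}, use the gradient conditions \eqref{02} and the change of variables $x\mapsto x-y$ to turn the current term into the $B^{(1)}_{j,p}$ sum, and expand the remaining summands in the basis $\{\omega_\varrho(A+x)\}$, using that $\mf c_j(x,B)=0$ when $B$ meets $\{0,e_j\}$ to absorb the extra factors $\omega_\varrho(x)$, $\omega_\varrho(x+e_j)$ into genuine basis elements. The paper only writes out the first of these steps and declares the others easy; your treatment of the $F^{(1)}_j$ and $G^{(1)}_j$ terms supplies exactly the omitted bookkeeping.
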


\begin{proof}
Fix $1\le j\le d$.  We consider separately each term on the right-hand
side of \eqref{06}. Let $B_j(x) = A_j(x)\, (D_j \varrho) (x)$. By the
gradient conditions \eqref{02}, the first term can be written as
\begin{equation*}
\frac 12\, \sum_{x\in\T_n^d} \sum_{p=1}^{n_j} \sum_{y\in\T_n^d} m_{j,p}(y) 
\, [\, \mb P^{(+2)}_\varrho \,(\tau_{x+y} \, g_{j,p}) \, ] \, (\eta)\;
B_j(x)\;.
\end{equation*}
Perform the change of variables $x'=x+y$ and express the cylinder
function $g_{j,p}$ in terms of its Fourier coefficients to rewrite
this expression as
\begin{equation*}
\frac 12\, \sum_{x\in\T_n^d} \sum_{p=1}^{n_j} 
\Big( \sum_{y\in\T_n^d} m_{j,p}(y) B_j(x-y) \Big)
\sum_{A: |A|\ge 2} \mf g_{j,p}(x,A) \, 
\omega_\varrho (\, A + x\, ) \;.
\end{equation*}
This expression corresponds to the first one on the right-hand side of
\eqref{10}. The other three can be obtained easily.
\end{proof}

Recall the definition of the asymmetric part of the generator
introduced in \eqref{11}.  For $1\le j\le d$, let
$C_j$, $I_j: \bb T^d_n \to \bb R$ be given by
\begin{equation}
\label{15}
C_j(x) \;=\; \mb m_j\, \varrho(x) \, [1 - \varrho(x+e_j)]\;,
\quad I_j(x) \;=\; E_{\nu_\varrho}[\tau_x c_j] \, C_j(x)
\;.
\end{equation}
For $1\le j\le d$, $1\le p\le n_j$, $x\in \bb T^d_n$, let
\begin{align}\label{16}
B^{(2)}_{j,p} (x) \;&=\; -\, \frac 12\, \sum_{y\in\T_n^d} m_{j,p}(y) \,
A_j(x-y)\, (C_j \varrho) (x-y) \; ,  \notag\\
E^{(2)}_j (x) \;&=\; -\, \frac 12\, A_j(x)\, (D_j \varrho) (x) \,C_j(x)\;, \notag\\
F^{(2)}_j (x) \;&=\; -\, \frac {[\, D_j (\chi\circ \varrho)] (x) \, (C_j \varrho) (x)}
{2\, \chi(\varrho(x+e_j))}  \;,  \notag\\
G^{(2)}_j (x) \;&=\; -\, \frac {[\,D_j (\chi\circ \varrho)] (x) \, (C_j \varrho) (x)}
{2\, \chi(\varrho(x))}  \;\cdot 
\end{align}

For $A\subset \bb T_n^d$, let
\begin{equation}
\label{42}
H^{(2)}_j(\varrho, x, A) \; =\; E^{(2)}_j(x)\; \mf c_{j}(x,A)  
\;+\; \sum_{p=1}^{n_j} B^{(2)}_{j,p} (x) \; \mf g_{j,p}(x,A) \;+\;
J^{(2)}_j(x,A)\;, 
\end{equation}
where
\begin{align*}
J^{(2)}_j(x,A) \; & =\; \Upsilon_{\{0,e_j\}} (A)\; 
(D_j \varrho)\, (x)\, C_j(x) \; \mf c_j(x,A\setminus \{0,e_j\}) \\
&  +\; \Upsilon_{\{0\}} (A) \;
F^{(2)}_j \, (x) \; \mf c_j(x,A\setminus \{0\}) \\
& +\; \Upsilon_{\{e_j\}} (A) \;
G^{(2)}_j \, (x) \; \mf c_j(x,A\setminus \{e_j\}) \;.
\end{align*}

\begin{lemma}
\label{APl02}
Let $L_{n,\nu_\varrho}^{T,*}$ be the adjoint of $L_n^T$ in
$L^2(\nu_\varrho)$. Then,
\begin{align*}
L_{n, \nu_\varrho}^{T,*} \, {\bf 1} \;=\; 
& -\, \sum_{x\in\T_n^d}\sum_{j=1}^d  
(\, D_j \, I_j \,)\, (x-e_j) \;
\omega_\varrho(x) \\
& + \; \sum_{j=1}^d \sum_{A: |A|\ge 2}  \sum_{x\in\T_n^d} H^{(2)}_j(\varrho,
  x, A) \, \omega_\varrho (A+x)\;, 
\end{align*}
where the (finite) sum over $A$ is performed over finite subsets $A$
with at least two elements.
\end{lemma}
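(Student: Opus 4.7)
The plan is to mirror the proof of Lemma \ref{APl01}, replacing the symmetric generator $L_n^S$ by the totally asymmetric one $L_n^T$ throughout. First I would compute $L_{n,\nu_\varrho}^{T,*}$ by the change-of-variable argument of Lemma \ref{A4}: since $c_j$ does not depend on $\eta_0$ or $\eta_{e_j}$, the rate $c_j(\tau_x\eta)$ is invariant under the swap $\sigma^{x,x+e_j}$, and only the exclusion factor and the Radon--Nikodym density $J_{x,x+e_j}$ are affected by the change of variable. Evaluating the resulting adjoint at the constant function, I obtain
\begin{equation*}
(L_{n,\nu_\varrho}^{T,*}\mb{1})(\eta)\;=\;\sum_{x,j}\mb{m}_j\, c_j(\tau_x\eta)\,
\Big\{\,\eta_{x+e_j}(1-\eta_x)\,J_{x,x+e_j}(\eta)\,-\,\eta_x(1-\eta_{x+e_j})\,\Big\}\,.
\end{equation*}

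Next, writing $\eta_y=\varrho(y)+\xi_\varrho(y)$ with $\alpha=\varrho(x)$, $\beta=\varrho(x+e_j)$, and using that on the event $\{\eta_x=0,\eta_{x+e_j}=1\}$ the ratio $J_{x,x+e_j}$ equals $\alpha(1-\beta)/[(1-\alpha)\beta]$, an elementary expansion collapses the brace above to
\begin{equation*}
\alpha(1-\beta)\,\Big\{\,\omega_\varrho(x+e_j)-\omega_\varrho(x)+(D_j\varrho)(x)\,\omega_\varrho(x)\,\omega_\varrho(x+e_j)\,\Big\}\,.
\end{equation*}
Since $\mb{m}_j\alpha(1-\beta)=C_j(x)$, this produces the asymmetric analog of Lemma \ref{A5}. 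Splitting $c_j(\tau_x\eta)=E_{\nu_\varrho}[\tau_x c_j]+\mb{P}_\varrho(\tau_x c_j)$, the mean part of the linear contribution is $\sum_{x,j}I_j(x)[\omega_\varrho(x+e_j)-\omega_\varrho(x)]$, which after a discrete summation by parts becomes $-\sum_{x,j}(D_j I_j)(x-e_j)\,\omega_\varrho(x)$, matching the degree-$1$ line of the statement.

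The three remaining terms (the mean part multiplied by $C_j(x)(D_j\varrho)(x)\omega_\varrho(x)\omega_\varrho(x+e_j)$, together with the two pieces coming from $\mb{P}_\varrho(\tau_x c_j)$) are of degree $\ge 2$. I would handle them by mimicking Lemmata \ref{A1}--\ref{A2}: split $\omega_\varrho(x+e_j)-\omega_\varrho(x)$ using the symmetric/antisymmetric identity \eqref{50}; apply the gradient condition \eqref{02} to convert sums of the form $\sum_x \mb{P}_\varrho(\tau_x c_j)\,[\xi_\varrho(x)-\xi_\varrho(x+e_j)]\cdot(\text{coefficient})$ into weighted translates of $g_{j,p}$ (after the substitution $x\mapsto x+y$); and finally expand all cylinder functions in the Fourier basis \eqref{04}. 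This produces the terms $E^{(2)}_j\,\mf{c}_j$ and $\sum_p B^{(2)}_{j,p}\,\mf{g}_{j,p}$. The boundary coefficients $F^{(2)}_j$, $G^{(2)}_j$ arise when one of the sites $0$, $e_j$ already lies in the Fourier index set $A$, in which case the repeated factor is collapsed via the pointwise identity $\omega_y^2=(1-2\varrho(y))\omega_y/\chi(\varrho(y))+1/\chi(\varrho(y))$; combined with the purely quadratic contribution from the mean splitting, these produce the three $\Upsilon$-terms defining $J^{(2)}_j$.

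The main obstacle is precisely this reorganisation: one must track carefully which Fourier sets $A$ receive contributions from which piece and check that the signs and scalar factors line up with the definitions in \eqref{16} and \eqref{42}. Informally, the whole computation is the one of Lemmata \ref{A1}--\ref{A2} under the formal substitution $(D_j\varrho)(x)\mapsto -C_j(x)$ applied to the \emph{single} scalar factor that previously arose from the symmetric current, the other factor of $(D_j\varrho)(x)$ being left untouched; this dictionary explains, for example, why the coefficient $-[(D_j\varrho)(x)]^2$ of the $\Upsilon_{\{0,e_j\}}(A)$ term in $J^{(1)}_j$ becomes $+(D_j\varrho)(x)\,C_j(x)$ in $J^{(2)}_j$. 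Once the correspondence is checked term by term, the remaining algebra is routine.
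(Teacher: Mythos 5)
Your proposal follows the paper's own route step for step: the adjoint formula of Lemma \ref{A7}, the pointwise collapse of the brace to $C_j(x)\,\{\omega_\varrho(x+e_j)-\omega_\varrho(x)+(D_j\varrho)(x)\,\omega_\varrho(x)\,\omega_\varrho(x+e_j)\}$ (which is the content of Lemma \ref{A8}), summation by parts for the degree-one part, and then a rerun of Lemmata \ref{A1}--\ref{A2} under the substitution $(D_j\varrho)(x)\mapsto -\,C_j(x)$ in the single outer scalar factor — exactly the dictionary the paper invokes, and your sign check on the $\Upsilon_{\{0,e_j\}}$ coefficient is correct. One small inaccuracy that does not affect the plan: the terms $F^{(2)}_j$, $G^{(2)}_j$ do not arise from collapsing repeated factors via $\omega_y^2=(1-2\varrho(y))\omega_y/\chi(\varrho(y))+1/\chi(\varrho(y))$ — since $c_j$ is independent of $\eta_0,\eta_{e_j}$, its Fourier index sets never meet $\{0,e_j\}$ and no repeated factors ever occur — rather they come from the cross terms $\omega_\varrho(x)/\chi(\varrho(x+e_j))+\omega_\varrho(x+e_j)/\chi(\varrho(x))$ produced by the decomposition \eqref{50} of $\omega_\varrho(x)-\omega_\varrho(x+e_j)$, multiplied against the Fourier expansion of $\mb P_\varrho(\tau_x c_j)$.
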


The proof of this lemma relies on the next two lemmata.

\begin{lemma}
\label{A7}
Recall the definition of $J_{x,x+e_j}$ given in Lemma \ref{A4}. Then,
for any $f\in L^2(\nu_\varrho)$,
\begin{align*}
(\, L_{n,\nu_\varrho}^{T,*} \, f\,)\, (\eta) \; &=\;
\sum_{x\in\T_n^d} \sum_{j=1}^d \mb m_j\, (\tau_x c_j)(\eta)\, J_{x,x+e_j}(\eta)\, 
(1-\eta_{x})\, \eta_{x+e_j}\, \{f(\sigma^{x,x+e_j}\eta) - f(\eta) \} \\
&+\; \sum_{x\in\T_n^d} \sum_{j=1}^d \mb m_j\, (\tau_x c_j)(\eta)\,
\big\{(1-\eta_x)\, \eta_{x+e_j}\, J_{x,x+e_j}(\eta) 
\,-\, \eta_x\, (1-\eta_{x+e_j}) \big\} \,  f(\eta)  \;.
\end{align*}
\end{lemma}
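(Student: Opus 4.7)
The plan is to compute the adjoint directly from the defining identity $\int (L_n^T f)\, g\, d\nu_\varrho = \int f\, (L_{n,\nu_\varrho}^{T,*} g)\, d\nu_\varrho$ and then rearrange the result into the ``generator-like'' form stated in the lemma, exactly in the spirit of Lemma \ref{A4}.

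First I would split $L_n^T f$ using $f(\sigma^{x,x+e_j}\eta)-f(\eta)$ into two contributions. The diagonal piece is straightforward: it yields, after pairing against $g$ and integrating, the term $-\sum_{x,j}\mb m_j\int (\tau_x c_j)(\eta)\,\eta_x(1-\eta_{x+e_j})\,f(\eta)\,g(\eta)\,d\nu_\varrho$. For the off-diagonal piece, $\int (\tau_x c_j)(\eta)\,\eta_x(1-\eta_{x+e_j})\,f(\sigma^{x,x+e_j}\eta)\,g(\eta)\,d\nu_\varrho(\eta)$, I perform the involutive change of variables $\eta\mapsto\sigma^{x,x+e_j}\eta$. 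Three facts make this clean: (i) the swap multiplies $d\nu_\varrho(\eta)$ by $J_{x,x+e_j}(\eta)$ on counting-measure weights; (ii) since $c_j$ does not depend on $\eta_0,\eta_{e_j}$, $\tau_x c_j$ is invariant under $\sigma^{x,x+e_j}$; (iii) the factors $\eta_x(1-\eta_{x+e_j})$ become $\eta_{x+e_j}(1-\eta_x)$. The result is
\begin{equation*}
\sum_{x,j}\mb m_j\int (\tau_x c_j)(\eta)\,(1-\eta_x)\eta_{x+e_j}\,J_{x,x+e_j}(\eta)\,f(\eta)\,g(\sigma^{x,x+e_j}\eta)\,d\nu_\varrho(\eta).
\end{equation*}

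Combining both contributions and reading off the operator acting on $g$ gives
\begin{equation*}
(L_{n,\nu_\varrho}^{T,*}g)(\eta)=\sum_{x,j}\mb m_j (\tau_x c_j)(\eta)\Big\{(1-\eta_x)\eta_{x+e_j}J_{x,x+e_j}(\eta)\,g(\sigma^{x,x+e_j}\eta)-\eta_x(1-\eta_{x+e_j})\,g(\eta)\Big\}.
\end{equation*}
To match the statement, I would then add and subtract $\mb m_j(\tau_xc_j)(\eta)(1-\eta_x)\eta_{x+e_j}J_{x,x+e_j}(\eta)\,g(\eta)$ inside the sum: the added term combined with $g(\sigma^{x,x+e_j}\eta)$ produces the ``jump'' form $J_{x,x+e_j}(1-\eta_x)\eta_{x+e_j}\{g(\sigma^{x,x+e_j}\eta)-g(\eta)\}$, while the subtracted term pairs with the diagonal piece to leave the multiplicative factor $(1-\eta_x)\eta_{x+e_j}J_{x,x+e_j}-\eta_x(1-\eta_{x+e_j})$ in front of $g(\eta)$. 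Renaming $g$ as $f$ yields exactly the claim.

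There is no real obstacle: all steps are elementary and mirror the symmetric case in Lemma \ref{A4}; the only point requiring slight care is verifying the three invariance/swap properties under $\sigma^{x,x+e_j}$ (in particular that $\tau_xc_j$ is unchanged by the swap, which relies on the standing assumption that $c_j$ does not depend on $\eta_0$ and $\eta_{e_j}$), and the algebraic rewriting that turns the ``one-sided'' adjoint into the manifestly ``generator plus killing term'' decomposition displayed in the statement.
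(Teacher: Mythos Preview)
Your argument is correct and is exactly the standard computation the paper has in mind: the paper gives no explicit proof of this lemma, treating it (like Lemma~\ref{A4}) as elementary, and your change of variables $\eta\mapsto\sigma^{x,x+e_j}\eta$ together with the invariance of $\tau_x c_j$ under that swap and the add-and-subtract step to reach the ``generator plus potential'' form is precisely the intended route.
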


\begin{lemma}
\label{A8}
We have that
\begin{align*}
L_{n,\nu_\varrho}^{T,*}{\bf 1} \;
& =\;  -\, \sum_{x\in\T_n^d}\sum_{j=1}^d  
(\, D_j \, I_j \,)\, (x-e_j) \,
\omega_\varrho(x) \\
& - \;  \sum_{x\in\T_n^d}\sum_{j=1}^d \,  [\mb P_\varrho (\tau_x
  c_j)]\, (\eta)\, C_j(x) \,
[\omega_\varrho(x) - \omega_\varrho(x+e_j)] \\
& +\; \sum_{x\in\T_n^d} \sum_{j=1}^d  (\tau_x c_j)(\eta)\,
C_j(x) \, (D_j \varrho) (x)\,
\omega_\varrho(x) \, \omega_\varrho(x+e_j)  \;.
\end{align*}
where $I_j(x) = E_{\nu_\varrho}[\tau_x c_j] \, C_j(x)$.
\end{lemma}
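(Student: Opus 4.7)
The plan is to mirror the proof of Lemma \ref{A5}, replacing the symmetric transition rates by the asymmetric ones. By Lemma \ref{A7} applied to $f \equiv {\bf 1}$, the first sum (which contains $f(\sigma^{x,x+e_j}\eta) - f(\eta)$) vanishes, leaving
\begin{equation*}
L_{n,\nu_\varrho}^{T,*}{\bf 1} \;=\; \sum_{x\in\T_n^d}\sum_{j=1}^d \mb m_j\,(\tau_x c_j)(\eta)\,\beta_{x,j}(\eta),
\end{equation*}
where $\beta_{x,j}(\eta) := (1-\eta_x)\,\eta_{x+e_j}\,J_{x,x+e_j}(\eta) - \eta_x\,(1-\eta_{x+e_j})$.

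The heart of the argument is the algebraic identity
\begin{equation*}
\mb m_j\,\beta_{x,j}(\eta) \;=\; C_j(x)\,\big[\omega_\varrho(x+e_j) - \omega_\varrho(x)\big] \;+\; (D_j\varrho)(x)\,C_j(x)\,\omega_\varrho(x)\,\omega_\varrho(x+e_j).
\end{equation*}
Both sides depend on $\eta$ only through $(\eta_x,\eta_{x+e_j})$, so the identity can be verified directly by going through the four cases $(\eta_x, \eta_{x+e_j}) \in \{0,1\}^2$, using the explicit values of $J_{x,x+e_j}$: namely $(1-\varrho(x))\varrho(x+e_j)/[\varrho(x)(1-\varrho(x+e_j))]$ at $(1,0)$, its reciprocal at $(0,1)$, and $1$ on the diagonal. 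Alternatively, one writes $\beta_{x,j} = (1-\eta_x)\eta_{x+e_j}(J_{x,x+e_j} - 1) + (\eta_{x+e_j} - \eta_x)$, substitutes the expression for $J_{x,x+e_j} - 1$ used in the proof of Lemma \ref{A5}, and then expands via $\eta_z = \varrho(z) + \chi(\varrho(z))\omega_\varrho(z)$. The coefficients collapse because $\chi(\varrho(x+e_j)) - (D_j\varrho)(x)[1-\varrho(x+e_j)]$, $\chi(\varrho(x)) - (D_j\varrho)(x)\varrho(x)$, and $\chi(\varrho(x))\chi(\varrho(x+e_j))/[\varrho(x+e_j)(1-\varrho(x))]$ all reduce to $\varrho(x)[1-\varrho(x+e_j)] = C_j(x)/\mb m_j$.

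With the identity in hand, insert it into the display for $L_{n,\nu_\varrho}^{T,*}{\bf 1}$ and decompose $\tau_x c_j = E_{\nu_\varrho}[\tau_x c_j] + \mb P_\varrho(\tau_x c_j)$ only in the degree-one term (the one carrying $\omega_\varrho(x+e_j) - \omega_\varrho(x)$). The $\mb P_\varrho$-piece gives the second line of the statement directly. For the mean piece, recalling $I_j(x) = E_{\nu_\varrho}[\tau_x c_j]\,C_j(x)$ and shifting $x \mapsto x - e_j$ in the first summand yields $\sum_x [I_j(x-e_j) - I_j(x)]\,\omega_\varrho(x) = -\sum_x (D_j I_j)(x-e_j)\,\omega_\varrho(x)$, the first line. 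The remaining degree-two contribution $(\tau_x c_j)(\eta)\,C_j(x)\,(D_j\varrho)(x)\,\omega_\varrho(x)\,\omega_\varrho(x+e_j)$ is kept intact and matches the third line, with $\tau_x c_j$ deliberately not split since the statement asks for the full $(\tau_x c_j)(\eta)$ there. The only genuinely delicate step is the identity for $\beta_{x,j}$; everything else is summation by parts.
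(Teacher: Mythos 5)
Your proposal is correct and follows essentially the same route as the paper: apply Lemma \ref{A7} with $f\equiv{\bf 1}$, carry out the pointwise algebraic reduction of $(1-\eta_x)\eta_{x+e_j}J_{x,x+e_j}-\eta_x(1-\eta_{x+e_j})$ to the two $\omega_\varrho$-terms (which the paper dismisses as a ``straightforward computation''), then add and subtract $E_{\nu_\varrho}[\tau_x c_j]$ in the degree-one term and sum by parts. Your explicit four-case verification of the key identity, and the checks that the relevant coefficients all collapse to $\varrho(x)[1-\varrho(x+e_j)]$, are accurate and simply make explicit what the paper leaves to the reader.
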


\begin{proof}
Recall the definition of $C_j$.  It follows from the previous
lemma and a straightforward computation that
\begin{align*}
L_{n,\nu_\varrho}^{T,*}{\bf 1} \;& =\; 
\sum_{x\in\T_n^d}\sum_{j=1}^d  (\tau_x c_j)(\eta)\,
C_j(x) \, [\omega_\varrho(x+e_j) - \omega_\varrho(x)] \\
& +\; \sum_{x\in\T_n^d} \sum_{j=1}^d  (\tau_x c_j)(\eta)\,
C_j(x) \,  (D_j \varrho) (x)\,
\omega_\varrho(x) \, \omega_\varrho(x+e_j)  \;.
\end{align*}
It remains to add and subtract $E_{\nu_\varrho}[\tau_x c_j]$ in the
first term and to sum by parts.
\end{proof}

\begin{proof}[Proof of Lemma \ref{APl02}]
The expression of $L_{n,\nu_\varrho}^{T,*}{\bf 1}$ is similar
to the one of $L_{n,\nu_\varrho}^{S,*}{\bf 1}$. In the second and third
terms one has to replace $D_j \varrho$ by $-\, C_j$. We may thus follow
the arguments presented for the symmetric part to complete the proof
of Lemma \ref{APl02}.
\end{proof}

\smallskip\noindent{\bf Acknowledgments.}  Part of this work was done
during K. Tsunoda's visit to IMPA.  He would like to thank IMPA for
numerous support and warm hospitality during his visit.   M. Jara acknowledges CNPq for its
support through the Grant 305075/2017-9, FAPERJ for its support
through the Grant E-29/203.012/2018 and ERC for its support through
the European Unions Horizon 2020 research and innovative programme
(Grant Agreement No. 715734). C. Landim has been partially supported
by FAPERJ CNE E-26/201.207/2014, by CNPq Bolsa de Produtividade em
Pesquisa PQ 303538/2014-7, and by ANR-15-CE40-0020-01 LSD of the
French National Research Agency. K. Tsunoda has been partially supported
by JSPS KAKENHI, Grant-in-Aid for Early-Career Scientists 18K13426.
The authors are grateful to the anonymous referees for the careful reading and their comments.

\smallskip\noindent{\bf Conflict of Interest:} The authors declare
that they have no conflict of interest.


\begin{thebibliography}{99}

\bibitem{bl} J. Beltr\'an, C. Landim: A lattice gas model for the
  incompressible Navier-Stokes equation.  Ann. Inst. Henri Poincar\'e
  Probab. Stat. {\bf 44}, 886--914 (2008).

%\bibitem{cl} E. Chavez, C. Landim: A correction to the hydrodynamic
%  limit of boundary driven weakly asymmetric exclusion processes in a
%  quasi-static time scale. J. Stat. Phys. {\bf 163}, 1079--1107
%  (2006).

\bibitem{d} R. L. Dobrushin: Caricature of Hydrodynamics. Proceed.
  IX--th International Congress of Math--Phys., , pag.117--132, 17--27
  July 1988, Simon, Truman, Davies ed., Adam Hilger 1989.

\bibitem{dpst88} R. L. Dobrushin, A.  Pellegrinotti, Yu.  M.  Suhov, L.
  Triolo: One-Dimensional harmonic lattice caricature of
  hydrodynamics: second approximation. J. Stat. Phys.
  {\bf 52}, 423--439 (1988).

\bibitem{dps90} R. L. Dobrushin, A.  Pellegrinotti, Yu.  M.  Suhov:
  One-dimensional harmonic lattice caricature of hydrodynamics: A
  higher correction.J. Stat. Phys. {\bf 61}, 387--402 (1990).

\bibitem{emy1} R. Esposito, R. Marra, H.-T. Yau: Diffusive limit of
  asymmetric simple exclusion.  Special issue dedicated to Elliott
  H. Lieb.  Rev. Math. Phys. {\bf 6}, 1233--1267 (1994).
 
\bibitem{emy2} R. Esposito, R. Marra, H.-T. Yau: Navier-Stokes
  equations for stochastic lattice gases. Commun.  Math. Phys. {\bf
    182}, 395--456 (1996).

 \bibitem{ft18} T. Funaki, K. Tsunoda: Motion by mean curvature from
  Glauber-Kawasaki dynamics. J. Stat. Phys. {\bf 177}, 183--208 (2019).

\bibitem{jl} M. Jara, C. Landim: The stochastic heat equation as limit
  of exclusion plus voter model, preprint.

\bibitem{jm1} M. Jara, O. Menezes: Non-equilibrium fluctuations for a
reaction-diffusion model via relative entropy, arXiv:1810.03418 (2018).

\bibitem{jm2} M. Jara, O. Menezes: Nonequilibrium fluctuations of
  interacting particle systems, arXiv:1810.09526 (2018).

\bibitem{kl} C. Kipnis, C. Landim: {\it Scaling Limits of Interacting
    Particle Systems}, Grundlheren der mathematischen Wissenschaften
  {\bf 320}, Springer-Verlag, Berlin, New York, 1999.

\bibitem{klo95} C. Kipnis, C. Landim, S. Olla: Macroscopic properties
  of a stationary non--equilibrium distribution for a non--gradient
  interacting particle system. Annales de l'Institut Henri Poincar\'e,
  s\'erie B, {\bf 31}, 191--221, (1995).
  
  \bibitem{klo} T. Komorowski, C. Landim, S. Olla: {\it Fluctuations in Markov processes. 
Time symmetry and martingale approximation.}, Grundlheren der mathematischen Wissenschaften
  {\bf 345}, Springer, Heidelberg, 2012.

\bibitem{loy96} C. Landim , S. Olla, H. T. Yau: Some properties of
  the diffusion coefficient for asymmetric simple exclusion processes.
  Ann. Probab. {\bf 24}, 1779--1807, (1996).

\bibitem{loy97} C. Landim, S. Olla, H. T. Yau: First-order correction
  for the hydrodynamic limit of asymmetric simple exclusion processes
  in dimension $d \ge 3$. Commun. Pure and Appl. Math. {\bf L},
  149--203 (1997).

\bibitem{lsv04} C. Landim, R. M. Sued, G. Valle: Hydrodynamic limit of
  asymmetric exclusion processes under diffusive scaling in $d\ge 3$.
  Commun. Math. Phys. {\bf 249}, 215--247, (2004).  

\bibitem{lsu1968} O. A Ladyzhenskaya, V. A. Solonnikov,
  N. N. Uraltseva: {\it Linear and quasilinear equations of parabolic
    type}, Translations of mathematical monographs, v. 23, American
  Mathematical Society, Providence, R.I., 1968.

%\bibitem{m} O. Menezes: Non-equilibrium fluctuations of interacting
%  particle systems, Thesis, IMPA, 2017.

\bibitem{q92} J. Quastel: Diffusion of color in the simple exclusion
  process. Comm. Pure Appl. Math. {\bf 45}, 623--679 (1992).

\bibitem{qy98} J. Quastel, H.-T. Yau: Lattice gases, large deviations,
and the incompressible Navier-Stokes equations.
Ann. Math. {\bf 148}, 51--108 (1998).

\bibitem{rez90} F. Rezakhanlou: Hydrodynamic limit for attractive
particle systems on ${\bf Z }^d$.
Comm. Math. Phys. {\bf 140}, 417--448 (1991).

\bibitem{v93} S. R. S. Varadhan: Nonlinear diffusion limit for a
  system with nearest neighbor interactions II, pp. 75--128 in:
  Asymptotic Problems in Probability Theory: Stochastic Models and
  Diffusions on Fractals (Sanda/Kyoto, 1990), K. D. Elworthy and
  N. Ikeda, eds., Pitman Res. Notes Math. Ser. 283, Longman
  Sci. Tech., Harlow, 1993.

%\bibitem{y} H.-T. Yau: Logarithmic Sobolev inequality for generalized
%  simple exclusion processes.  Probab. Theory Related Fields {\bf
%    109}, 507--538 (1997).
 
\end{thebibliography}
\end{document}